\newcounter{i}
\newtheorem{theorem}{Theorem} 
\newtheorem{lemma}[theorem]{Lemma}
\newtheorem{conjecture}[theorem]{Conjecture}
\newtheorem{corollary}[theorem]{Corollary}
\newtheorem{claim}[theorem]{Claim}
\theoremstyle{definition}
\newtheorem{definition}[theorem]{Definition}
\newenvironment{proofclaim}[1][]%
{\noindent \emph{Proof.} {}{#1}{}}{\hfill
	$\Diamond$\vspace{1em}}
\theoremstyle{plain} 
\newcommand{\thistheoremname}{}
\newtheorem{genericthm}[section]{\thistheoremname}
\newcommand{\es}{\emptyset}
\newcommand{\eps}{\varepsilon}
\newcommand{\sm}{\setminus}
\renewcommand{\subset}{\subseteq}
\newcommand{\NATS}{\mathbb{N}}
\newcommand{\om}[1]{|#1|_\mu}
\newcommand{\Om}{\Omega}
\newcommand{\M}{\mathcal{M}}
\newcommand{\MP}{\mathcal{MP}}
\DeclareMathOperator{\K}{Keep}
\DeclareMathOperator{\Eq}{Eq}
\renewcommand{\Pr}{\mathbf{Pr}}
\newcommand{\Exp}{\mathbf{E}}
\newcommand{\Prob}[1]{\ensuremath{%
		\mathbf{Pr} \left[#1\right]
}}
\newcommand{\Expect}[1]{\ensuremath{%
		\mathbf{E}\left[#1\right]
}}
\DeclareMathOperator{\Med}{Med}
\renewcommand{\L}{\ell}
\newcommand{\N}{n}
\newcommand{\e}{\mathrm{e}}
\title{Edge-colouring graphs with local list sizes}
\author{
	Marthe Bonamy
	\thanks{CNRS, LaBRI, Universit\'{e} de Bordeaux, France {\tt marthe.bonamy@u-bordeaux.fr}.}
	\and
	Michelle Delcourt
	\thanks{Department of Mathematics, Toronto Metropolitan University,
		Toronto, Ontario M5B 2K3, Canada {\tt mdelcourt@torontomu.ca}. Research supported by supported by NSERC under Discovery Grant No. 2019-04269.}
	\and
	Richard Lang
	\thanks{Fachbereich Mathematik, Universit\"{a}t Hamburg, 20146 Hamburg, Germany {\tt richard.lang@uni-hamburg.de}.}
	\and
	Luke Postle
	\thanks{Combinatorics and Optimization Department,
		University of Waterloo, Waterloo, Ontario N2L 3G1, Canada {\tt lpostle@uwaterloo.ca}. Partially supported by NSERC
		under Discovery Grant No. 2019-04304.}}
\date{\today}
\begin{document}
	\font\smallrm=cmr8
	\maketitle
	
	\begin{abstract}
		The famous List Colouring Conjecture from the 1970s states that for every graph $G$ the chromatic index of $G$ is equal to its list chromatic index.  In 1996 in a seminal paper, Kahn proved that the List Colouring Conjecture holds asymptotically.  Our main result is a local generalization of Kahn's theorem.  
		More precisely, we show that, for a graph $G$ with sufficiently large maximum degree $\Delta$ and minimum degree $\delta \geq \ln^{25} \Delta$, the following holds: for every assignment $L$ of lists of colours to the edges of $G$, such that $|L(e)| \geq (1+o(1)) \cdot  \max\left\{\deg(u),\deg(v)\right\}$ for each edge $e=uv$, there is an $L$-edge-colouring of $G$. Furthermore, Kahn showed that the List Colouring Conjecture holds asymptotically for linear, $k$-uniform hypergraphs, and recently Molloy generalized Kahn's original result to correspondence colouring as well as its hypergraph generalization.  We prove local versions of all of these generalizations by showing a weighted version that simultaneously implies all of our results.
	\end{abstract}

	\section{Introduction} 
	A \emph{$k$-edge-colouring} of a graph $G$ is an assignment of a set of $k$ colours to the edges of $G$ so that no two incident edges receive the same colour.  The \emph{chromatic index} of $G$, denoted $\chi'(G)$, is the minimum integer $k$ such that $G$ has a $k$-edge-colouring.  A natural generalization of this concept is list edge-colouring introduced independently by Vizing~\cite{Vizing_engl} as well as by Erd\H{o}s, Rubin, and Taylor~\cite{ERT79}.  An \emph{$L$-edge-colouring} of a graph $G$ is an edge-colouring in which each edge $e$ receives a colour from a prescribed list $L(e)$ of permissible colours. A classic problem in list edge-colouring is to determine lower bounds for $|L(e)|$ that guarantee that there is an $L$-edge-colouring for all lists $L(e)$ satisfying these conditions.
	Much of the research in this area has focused on global bounds, where all lists are bounded from below by the same parameter.
	More precisely, the \emph{list chromatic index}, denoted by $\chi'_{\ell}(G)$, is defined as the least $k$ such that there is an $L$-edge-colouring whenever $|L(e)| \geq k$ for all edges $e$.  By considering lists of the form $L(e) = \left\{1, 2, \ldots ,\chi'(G)\right\}$ for all edges $e \in E(G)$, we see that these parameters are related by the inequality $\chi'_{\ell}(G) \geq \chi'(G)$.  
	
	The famous \textit{List (Edge) Colouring Conjecture} suggests that something stronger is true.
	\begin{conjecture}\label{con:list-colouring-conjecture}
		If $G$ is a graph, then $\chi'_{\ell}(G) = \chi'(G)$.
	\end{conjecture}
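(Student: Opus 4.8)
The plan is to attack Conjecture~\ref{con:list-colouring-conjecture} by reducing it to a clean structural core and then bringing to bear the two complementary toolkits that have driven progress on list edge-colouring: an algebraic one and a probabilistic one. By Vizing's theorem $\chi'(G)\in\{\Delta,\Delta+1\}$, and a short padding argument reduces the problem to simple graphs; moreover, embedding $G$ in a suitable regular supergraph of the same class, it suffices to prove that every $d$-regular graph $G$ with $\chi'(G)=d$ satisfies $\chi'_\ell(G)\le d$. The first milestone is the bipartite case, which is already known: Galvin's theorem orients the line graph of a bipartite multigraph using K\"onig's edge-colouring theorem so that every induced subdigraph has a kernel, and then the kernel method (Bondy--Boppana--Siegel) finishes. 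This is both a consistency check and the template one would try to push to non-bipartite $G$.

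For general graphs I would pursue the algebraic route through the Combinatorial Nullstellensatz. Attach to $G$ the graph polynomial $P=\prod_{ef\in E(L(G))}(x_e-x_f)$ of the line graph $L(G)$; the conjecture follows if $P$ has a nonzero coefficient on a monomial $\prod_e x_e^{a_e}$ with $a_e\le \chi'(G)-1$ and $\sum_e a_e=\deg P$, equivalently if $L(G)$ has a different number of even and odd Eulerian sub-orientations with out-degree sequence $(a_e)_e$. The idea is to build such an orientation colour class by colour class: fix a proper $\chi'(G)$-edge-colouring of $G$ (it exists since $G$ is class~1) and orient the edges of $L(G)$ between and within colour classes so as to control the parity, mirroring the Alon--Tarsi analysis that underlies the planar and bipartite cases.

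The third ingredient, for the regime where the algebraic identity cannot be evaluated directly, is the semi-random (R\"odl nibble) method following Kahn: iteratively colour a random portion of the edges while maintaining that every uncoloured edge retains many admissible colours relative to its uncoloured degree, tracking the process with Azuma--Hoeffding and Talagrand concentration. This already gives $\chi'_\ell(G)\le\Delta+o(\Delta)$; to reach the exact value one must remove the $o(\Delta)$ slack via an exhaustive or entropy-compression finish on the sparse leftover graph, where the uncoloured degree is far below the list size.

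The main obstacle --- and the reason the conjecture has resisted for over forty years --- is exactly this last step, the passage from ``asymptotically $\Delta$'' to ``exactly $\chi'(G)$''. The kernel method collapses because line graphs of non-bipartite graphs need not be perfect, let alone kernel-perfectly orientable; the Alon--Tarsi coefficient of $L(G)$ admits no known closed form and the relevant even/odd Eulerian-orientation counts are not understood; and the nibble leaves a deficit that no current technique erases without extra colours. Accordingly, the honest deliverable of this programme --- and the content of the present paper --- is not Conjecture~\ref{con:list-colouring-conjecture} itself but its asymptotic and \emph{local} strengthenings: per-edge lists of size $(1+o(1))\max\{\deg(u),\deg(v)\}$ suffice whenever $\delta\ge\ln^{25}\Delta$, together with the weighted, correspondence, and hypergraph generalizations, all obtained from a single weighted colouring theorem proved via a carefully controlled nibble in the sections that follow.
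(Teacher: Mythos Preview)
The statement you were asked to prove is Conjecture~\ref{con:list-colouring-conjecture}, the List Colouring Conjecture. The paper does \emph{not} contain a proof of it; it is stated precisely as an open conjecture, with the paper's contribution being the asymptotic and local results (Theorems~\ref{thm:main-simple}--\ref{thm:main}). So there is no ``paper's own proof'' to compare against, and your proposal is not a proof either: it is a survey of possible lines of attack (Galvin/kernel, Alon--Tarsi/Nullstellensatz, Kahn/nibble) together with an honest admission that none of them closes the gap. You say this yourself in the final paragraph.

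A few specific points where the sketch would break down even as a programme. The ``padding'' and ``embedding in a regular supergraph of the same class'' reductions are not innocuous: embedding $G$ in a $\Delta$-regular supergraph need not preserve being class~1, and reducing $\chi'_\ell$ questions to the regular case requires care that you do not supply. The Alon--Tarsi step --- orienting $L(G)$ colour class by colour class to force a nonzero coefficient --- is exactly the step no one knows how to do for general class~1 graphs; ``mirroring the Alon--Tarsi analysis'' is a hope, not an argument. And the nibble-plus-finishing idea cannot reach $\chi'(G)$ exactly with current techniques, as you acknowledge. In short, what you have written is a reasonable literature-aware outline of why the conjecture is hard, but it is not a proof attempt that can be evaluated for correctness, and the paper makes no claim to prove the conjecture either.
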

	\noindent
	This conjecture has been suggested independently by a number of researchers during the 1970s and 1980s; for more details on the history of the List Colouring Conjecture, see Problem 12.20 in Jensen and Toft~\cite{JT95}.  The List Colouring Conjecture has been confirmed for several classes of graphs including $d$-regular, $d$-edge-colourable planar graphs by Ellingham and Goddyn~\cite{EG96}, complete graphs of odd order by H\"aggkvist and Janssen~\cite{HJ97}, and complete graphs of prime degree by Schauz~\cite{Sch14}.  One of the best known results in this vein is the following theorem by Galvin~\cite{Gal95}, proving Dinitz's conjecture from the 1950s. 
	
	\begin{theorem}[Galvin]\label{Galvin}
		If $G$ is a bipartite graph, then 
		$\chi'_{\ell}(G) = \chi'(G).$
	\end{theorem}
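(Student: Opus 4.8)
The plan is to prove $\chi'_{\ell}(G) \le \chi'(G)$; the reverse inequality is immediate, by taking all lists equal to $\{1,\dots,\chi'(G)\}$. Set $k := \chi'(G)$, fix a proper $k$-edge-colouring $c\colon E(G)\to\{1,\dots,k\}$ of $G$ (for bipartite $G$ one has $k=\Delta(G)$ by K\"onig, though this is not needed), and let $X,Y$ be the two sides of the bipartition. Colouring the edges of $G$ from lists is the same as colouring, properly and from the same lists, the vertices of the line graph $H$ of $G$ (whose vertices are the edges of $G$, adjacent when they share an endpoint), so the strategy is to orient $H$ cleverly and run a kernel argument. Orient $H$ as follows, obtaining a digraph $D$: if edges $e,f$ of $G$ are incident, meeting at the vertex $v$, direct the arc from the edge of larger $c$-value to the edge of smaller $c$-value when $v\in X$, and from smaller to larger when $v\in Y$ (well defined, since incident edges share exactly one endpoint). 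A short count then shows that an edge $e=xy$ with $x\in X$, $y\in Y$ has at most $c(e)-1$ out-neighbours through $x$ and at most $k-c(e)$ through $y$, so $d^+_D(e)\le k-1$ for every $e$.

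The theorem now reduces to two facts. (i) If $D$ is a digraph in which every induced subdigraph has a kernel --- an independent set $K$ such that every vertex outside $K$ has an out-neighbour in $K$ --- and $|L(v)|\ge d^+_D(v)+1$ for all $v$, then $D$ admits a proper $L$-colouring. (ii) The orientation $D$ above is \emph{kernel-perfect}, i.e.\ satisfies the hypothesis of (i). Granting both, $|L(e)|\ge k=\chi'(G)\ge d^+_D(e)+1$ produces the desired $L$-edge-colouring. For (i) I would induct on $\sum_v|L(v)|$: choose a colour $\alpha$ occurring in some list, let $V_\alpha=\{v:\alpha\in L(v)\}$, pick a kernel $K$ of $D[V_\alpha]$ (nonempty, since $V_\alpha\ne\es$, hence the list total strictly drops), colour $K$ with $\alpha$, delete $K$, and delete $\alpha$ from all remaining lists; every surviving vertex with $\alpha$ in its list lies in $V_\alpha\sm K$ and hence loses at least one out-neighbour to $K$, while its list shrinks by exactly one, so the hypothesis $|L(v)|\ge d^+_D(v)+1$ survives the step.

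The heart of the matter is (ii), which I would derive from the theory of stable matchings. An induced subdigraph of $D$ corresponds to an edge subset $F\subseteq E(G)$. Give each $x\in X$ the preference order on its incident edges by increasing $c$-value and each $y\in Y$ the order by decreasing $c$-value, with every vertex preferring to be matched over not. Then a matching $M\subseteq F$ is a kernel of $D[F]$ if and only if it is a stable matching (has no blocking pair): a matching is automatically an independent set of $H$, and for $e=xy\in F\sm M$, ``$e$ is not a blocking pair'' says that either $x$ is matched in $M$ to an edge of smaller $c$-value --- an out-neighbour of $e$ in $D$ through $x$ --- or $y$ is matched to an edge of larger $c$-value --- an out-neighbour of $e$ through $y$; the converse runs through the same dictionary, using that the $c$-values at any vertex are distinct so all comparisons are strict. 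Since Gale--Shapley yields a stable matching for every bipartite preference system, every $D[F]$ has a kernel, so $D$ is kernel-perfect.

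The main obstacle is essentially organisational: the orientation convention for $D$ must be set up so that the out-degree bound $d^+_D(e)\le k-1$ and the kernel/stable-matching correspondence point the same way, and ``blocking pair'' must be stated with the right strictness (an unmatched endpoint always counts as preferring to be matched). With the conventions aligned, assembling (i), (ii) and the out-degree bound gives $\chi'_{\ell}(G)\le\chi'(G)$, hence equality.
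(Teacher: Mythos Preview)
The paper does not actually prove Theorem~\ref{Galvin}; it is quoted as background and attributed to Galvin~\cite{Gal95} without argument, so there is no ``paper's own proof'' to compare against. Your write-up is correct and is in fact essentially Galvin's original proof: use a proper $\chi'(G)$-edge-colouring to define preferences, orient the line graph accordingly, check the out-degree bound $d^+_D(e)\le \chi'(G)-1$, prove kernel-perfectness via the Gale--Shapley stable matching theorem, and finish with the Bondy--Boppana--Siegel kernel lemma (your item (i)). The conventions are set up consistently, the blocking-pair/kernel dictionary is right (including the strictness because incident edges receive distinct colours), and the induction in (i) terminates since each step strictly shrinks $\sum_v |L(v)|$.
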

	
	\noindent
	In 1996 Kahn~\cite{JK} showed that Conjecture~\ref{con:list-colouring-conjecture} holds asymptotically as follows.  Let $\Delta(G)$ denote the maximum degree of a graph $G$, and recall that Vizing~\cite{Viz} proved that $\Delta(G) \leq \chi'(G) \leq \Delta(G) + 1$ for all graphs $G$.  
	
	\begin{theorem}[Kahn]\label{thm:kahn}
		For every $\eps>0$, 
		if $G$ is a graph with sufficiently large maximum degree $\Delta(G)$, then $\chi'_{\ell}(G) \leq (1+\eps)\Delta(G)$.
	\end{theorem}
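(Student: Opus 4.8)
The plan is to prove this by the semi-random (``nibble'') method, the route pioneered by Kahn. First put the problem in a convenient form: every graph of maximum degree $\Delta$ is a subgraph of some $\Delta$-regular graph, and extending the colour lists arbitrarily to the new edges only makes the task harder, so it suffices to $L$-edge-colour a $\Delta$-regular $G$ in which every list has size exactly $k:=\lceil(1+\eps)\Delta\rceil$. We build the colouring over a sequence of rounds, maintaining before round $i$ a subgraph $G_i\subseteq G$ of the still-uncoloured edges together with, for each $e\in E(G_i)$, the residual list $L_i(e)\subseteq L(e)$ of colours not yet used on an edge sharing an endpoint with $e$; at the start $G_1=G$ and $L_1=L$. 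In round $i$ we sample, for each colour $c$ that is not yet exhausted, a matching $M_i^c$ inside the graph $G_i^{\,c}$ of uncoloured edges whose residual list still contains $c$, coupling the samples across colours so that the $M_i^c$ are pairwise edge-disjoint, and we extend the colouring by giving colour $c$ to every edge of $M_i^c$.

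The pivotal choice is the distribution of these matchings: we draw $M_i^c$ from a \emph{hard-core distribution} on the matchings of $G_i^{\,c}$, with a fugacity for which every edge $e\in E(G_i^{\,c})$ has $\Pr[e\in M_i^c]\ge(1-o(1))/\Delta(G_i)$. That this is possible is the place where the fractional chromatic index enters: the quantity that really limits these marginals is $1/\chi'_f(G_i^{\,c})$, and $\chi'_f(G_i^{\,c})\le\Delta(G_i^{\,c})+1$ by Vizing's theorem. The reason we insist on the hard-core model, rather than on some arbitrary distribution realising a near-optimal fractional edge-colouring, is its spatial-mixing property: the marginal $\Pr[e\in M_i^c]$, and more generally the influence of one edge's outcome on another edge's, is controlled by the structure of $G_i^{\,c}$ in a bounded-radius ball around $e$.

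The heart of the proof is to carry through, by induction on $i$, that with positive probability $\deg_{G_i}(v)\le d_i$ holds for every vertex $v$ and $|L_i(e)|\ge\ell_i$ holds for every $e\in E(G_i)$, where $d_i$ and $\ell_i$ are deterministic targets. Two facts drive this. Since the coverage probability is $\approx 1/\Delta(G_i)\approx 1/d_i$ and each uncoloured edge has $\Omega_\eps(d_i)$ colours available, each round colours a constant (depending on $\eps$) fraction of the edges at every vertex, so $d_i$ decays by a constant factor per round. And since each colour is spread over a near-optimal matching, the colours eventually used on the edges at $u$ and those used on the edges at $v$ behave like ``random'' subsets, of size at most $\Delta-1$ each, of a universe of $\approx(1+\eps)\Delta$ colours; hence they overlap in $\approx\Delta/(1+\eps)$ colours, so $|L_i(e)|\gtrsim(1+\eps)\Delta-2\Delta+\frac{\Delta}{1+\eps}=\frac{\eps^2}{1+\eps}\Delta$, and in particular no residual list is ever exhausted. (This last inequality is exactly what a naive greedy colouring of the line graph of $G$ fails to supply, since there the list size is only $(1+\eps)\Delta$ while the degree is $\approx 2\Delta$.) We run the nibble until $d_i\le\Delta^{0.9}$; at that point every remaining edge has $\Omega_\eps(\Delta)$ colours available but only $O(\Delta^{0.9})$ uncoloured incident edges, so the colouring is completed greedily.

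The main obstacle is the concentration step that keeps this induction alive. After round $i$ the quantities $\deg_{G_{i+1}}(v)$ and $|L_{i+1}(e)|$ are intricate functions of all the matching choices made in the round, so one must bound how much a single choice can perturb them and then combine a Talagrand- or bounded-differences-type inequality with the Lov\'asz Local Lemma to obtain $\deg_{G_{i+1}}(v)\le d_{i+1}$ and $|L_{i+1}(e)|\ge\ell_{i+1}$ simultaneously for all $v$ and $e$. This is precisely where the spatial-mixing property of the hard-core model is spent: it bounds the sizes of the dependency neighbourhoods and limits how far a single matching choice can propagate. A secondary nuisance is that these invariants will fail for a small set of ``exceptional'' vertices and edges whose parameters have drifted out of range; these are frozen, shown to stay sparse, and dealt with at the end together with the greedy finish. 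Everything else — computing $\Exp[\deg_{G_{i+1}}(v)]$ and $\Exp[|L_{i+1}(e)|]$, solving the recursions for $d_i$ and $\ell_i$, fixing the number of rounds, and verifying the Local Lemma hypotheses — is routine.
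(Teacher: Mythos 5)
Your proposal reconstructs Kahn's original argument, and that is a genuinely different route from the one this paper takes to Theorem~\ref{thm:kahn}. The paper never samples a random matching per colour: it obtains Kahn's theorem as a special case of the weighted local statement (Theorem~\ref{thm:maingraph}, with constant weights $\mu(e,c)\approx\Delta^{-1}$), which is proved by the \emph{naive colouring procedure} of Lemma~\ref{lem:nibble} --- every edge independently activates each colour of its list with probability about $\mu(e,c)/(\L\ln\N)$, conflicts are resolved by uncolouring, and an equalizing coin flip makes the survival probability of every pair $(e,c)$ exactly $\K^k$; the ratio of weighted list size to weighted conflict number then improves by a factor $1+\Theta(\eps/\ln\N)$ per round until the Local Lemma computation of Lemma~\ref{lem:edge-finisher} finishes. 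The trade-off is real: your hard-core-matching route is what makes the matching-polytope formulation (Theorem~\ref{thm:mp}) natural and encodes the interaction between the two endpoints of an edge through near-perfect fractional matchings, but it leans on the Kahn--Kayll machinery of hard-core distributions and has no analogue for correspondence colouring or for genuinely local list sizes, where ``the set of edges coloured $c$'' is not a meaningful matching; the paper's edge-local procedure is more elementary per round and survives those generalizations.

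Judged as a proof of Theorem~\ref{thm:kahn}, however, your sketch has a gap at its centre. The assertion that the colours used up at $u$ and at $v$ ``behave like random subsets'' of the palette, hence overlap in $\approx\Delta/(1+\eps)$ colours, is precisely the theorem's main difficulty, and it does not follow from the invariants you propose to carry ($\deg_{G_i}(v)\le d_i$ and $|L_i(e)|\ge\ell_i$). To compute $\Exp|L_{i+1}(e)|$ you need, for each colour $c$ and endpoint $v$, the probability that some edge at $v$ receives $c$, which is $\sum_{f\ni v,\,c\in L_i(f)}\Pr[f\in M_i^c]\approx t_i(v,c)/d_i$ where $t_i(v,c)$ is the \emph{colour degree} $|\{f\ni v \text{ uncoloured}: c\in L_i(f)\}|$ --- the analogue of the paper's hypothesis $\om{N_{G,L,\sigma}(e,v,c)}\le\N$ --- and you then need the loss events at the two endpoints to be nearly independent. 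So the induction must also control the $t_i(v,c)$ and must carry a quantitative quasirandomness statement from round to round; establishing and propagating that near-independence (via the spatial mixing of the hard-core measure) is the bulk of Kahn's paper and cannot be filed under ``routine.'' A smaller point: a coupling making the matchings $M_i^c$ pairwise edge-disjoint while preserving the hard-core marginals is not obviously available; the standard device is to sample them independently and resolve multiply-covered edges arbitrarily, and the resulting perturbation of the activation probabilities has to be absorbed into the error terms.
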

	
	In this paper, we study list edge-colourings under local conditions, where $|L(e)|$ is lower bounded by a function that takes into account the  local structure around $e$.  For vertex colourings, such local notions have been recently studied in terms of local clique sizes by Bonamy, Kelly, Nelson, and Postle~\cite{BKNL18} and for triangle-free graphs by Davies, de~Joannis~de Verclos, Kang, and Pirot~\cite{DJK20}.  
	For edge-colourings, local generalizations appeared as early as in the work of Erd\H{o}s, Rubin, and Taylor~\cite{ERT79}.  
	Borodin, Kostochka, and Woodall showed a local generalization of Galvin's theorem~\cite{BKW97}.
	\begin{theorem}[Borodin, Kostochka, and Woodall]\label{BKW}
		If $G$ is a bipartite graph and $L$ is a list assignment of $E(G)$ such that for every edge $e=uv$
		$$|L(e)|\ge \max \{\deg(u),\deg(v)\},$$
		then there is an $L$-edge-colouring of $G$.
	\end{theorem}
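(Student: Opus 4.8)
\noindent The plan is to follow Galvin's kernel method. Recall that a \emph{kernel} of a digraph $D$ is an independent set $S$ such that every vertex outside $S$ has an out-neighbour in $S$, and that $D$ is \emph{kernel-perfect} if every induced subdigraph of $D$ has a kernel. The reduction of Bondy--Boppana--Siegel is that if the line graph $L(G)$ admits a kernel-perfect orientation $D$ with $d^+_D(e)<|L(e)|$ for every edge $e$ of $G$, then $G$ is $L$-edge-colourable: one processes the colours one at a time, and for the current colour $c$ colours (with $c$) a kernel of the sub-digraph induced by the edges that still have $c$ in their list; the hypothesis $d^+<|L|$ is preserved throughout, and every edge is eventually coloured. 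So it suffices to exhibit a kernel-perfect orientation $D$ of $L(G)$ with $d^+_D(e)\le\max\{\deg(u),\deg(v)\}-1$ for every edge $e=uv$.

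Next I would reduce this to a purely combinatorial statement. Let $A,B$ be the parts of $G$, fix an arbitrary linear order $\prec_w$ on the edges incident to each vertex $w$, and let $D$ be the orientation of $L(G)$ obtained by directing, for incident edges $e,f$ meeting at a vertex $w$, the edge $e\to f$ towards the $\prec_w$-larger of $e,f$. Since $G$ is triangle-free, every clique of $L(G)$ lies inside a single star, hence is oriented transitively, so $D$ is clique-acyclic; and $L(G)$ is perfect (by K\"onig's theorem), so $D$ is kernel-perfect by the Boros--Gurvich theorem (this is also what underlies Galvin's stable-matching argument). Writing $b_w(e)\in\{1,\dots,\deg(w)\}$ for the rank of $e$ in $\prec_w$, we get $d^+_D(e)=(\deg(u)-b_u(e))+(\deg(v)-b_v(e))$ for $e=uv$, so the required bound $d^+_D(e)\le\max\{\deg(u),\deg(v)\}-1$ is exactly
\[
 b_u(e)+b_v(e)\ \ge\ \min\{\deg(u),\deg(v)\}+1\qquad\text{for every edge }e=uv .
\]
It therefore remains to choose a bijective ranking $b_w\colon E(w)\to\{1,\dots,\deg(w)\}$ at every vertex $w$ satisfying this inequality.

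I would prove that such rankings exist by induction on $|V(G)|$. Delete a vertex $w^*$ of minimum degree $\delta$ and take rankings $b'$ for $G-w^*$ from the induction hypothesis; then re-insert $w^*$ by choosing a ranking of its $\delta$ incident edges and inserting each such edge $e=w^*v$ into the ranking at $v$ at a suitable rank. The crucial observation is that pushing an already-ranked edge $f=uv$ up by one cannot violate its inequality (the right-hand side grows by at most one as $\deg(v)$ increases by one, while the left-hand side grows by exactly one), whereas keeping $f$ at its old rank is harmless unless $f$ is \emph{tight and heavy at $v$} --- meaning $\deg(u)\ge\deg(v)$ and $b'_u(f)+b'_v(f)=\deg(v)$; so one should insert $e$ at or below the ranks of all such tight-heavy edges at $v$. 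Because $\delta$ is the minimum degree, the inequality for $e$ itself only demands $b_{w^*}(e)+b_v(e)\ge\delta+1$, and a short calculation shows that the insertion ranks at the neighbours of $w^*$ together with the ranking at $w^*$ can be chosen simultaneously precisely when a certain deficiency-type Hall condition holds on the positions of the tight-heavy edges.

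The main obstacle is exactly this last point. The Hall condition does not follow from the bare induction hypothesis (for instance it can fail if two neighbours of $w^*$ each carry a tight-heavy edge of rank $1$), so one must strengthen the statement being proved by induction to also control where tight edges may sit in the rankings --- equivalently, to keep the constructed orientation of $L(G)$ within the desired local out-degree bounds no matter which vertex is peeled off next. Pushing this extra bookkeeping through the induction is where the real work lies; the kernel-perfectness and the reduction to the combinatorial statement are routine. Once the rankings, and hence $D$, are constructed, $D$ simultaneously witnesses $L$-edge-colourability for \emph{every} list assignment with $|L(e)|\ge\max\{\deg(u),\deg(v)\}$, which proves the theorem.
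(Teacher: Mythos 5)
The framework you set up is sound and is in fact the approach of the cited source: the Bondy--Boppana--Siegel reduction is stated correctly, the orientation of $L(G)$ induced by linear orders at the vertices is transitive on every star and hence kernel-perfect (by Gale--Shapley/Galvin, without needing Boros--Gurvich), and your out-degree computation correctly reduces the theorem to the existence of rankings $b_w$ with $b_u(e)+b_v(e)\ge \min\{\deg(u),\deg(v)\}+1$ for every edge $e=uv$. Note that the paper under review does not prove this theorem at all -- it cites Borodin, Kostochka and Woodall -- so the only thing to assess is whether your argument stands on its own.

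It does not, and you say so yourself: the existence of the rankings is the entire content of the theorem, and your proof of it is missing. The proposed induction (peel off a minimum-degree vertex, reinsert its edges at suitable ranks) is not shown to close; you explicitly observe that the required Hall-type condition on the placement of ``tight and heavy'' edges can fail under the bare induction hypothesis, and the strengthening that would repair it is left unspecified. This is not bookkeeping: in the $d$-regular case the target inequality is tight on average (summing $b_u(e)+b_v(e)$ over all edges gives exactly $\sum_e(d+1)$), so there is no slack, and the correct construction there already requires a proper $d$-edge-colouring with the order reversed on one side rather than anything produced by greedy insertion. For the general local bound, Borodin, Kostochka and Woodall carry out a genuine (and multigraph-strength) construction of such an orientation; until you supply the analogous lemma -- either by completing the strengthened induction or by deriving the rankings from a suitable local edge-colouring of a bipartite graph -- the proof is incomplete at its central step.
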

	
	Moreover, they proved a local result for general graphs, namely that if for every edge $e=uv$,
	$|L(e)| \geq \max \{\deg(u),\deg(v)\}+\left\lfloor \frac{1}{2}\min\{\deg(u),\deg(v)\}\right\rfloor$, then $G$ is $L$-edge-colourable.  We remark that Galvin's theorem and Borodin, Kostochka, and Woodall's results also hold for multigraphs.

	Our first main result is a local analogue of Kahn's theorem under the condition that the maximum degree is polylogarithmic in terms of the minimum degree.
	
	\begin{theorem}\label{thm:main-simple}
		For every $\eps >0$ the following holds: if $G$ is a graph with sufficiently large maximum degree $\Delta(G)$, minimum degree $\delta(G) \geq \ln^{25} \Delta(G)$, and  $L$ is a list assignment for $E(G)$ such that for every edge $e=uv$
		$$|L(e)|\ge (1+\eps)  \max \{\deg(u),\deg(v)\},$$
		then there is an $L$-edge-colouring of $G$. 
	\end{theorem}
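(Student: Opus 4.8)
The plan is to prove \Cref{thm:main-simple} by the semi-random (``nibble'') method, following the architecture of Kahn's proof of \Cref{thm:kahn} but with every estimate localised, so that each edge is handled in proportion to the degrees of its endpoints. As indicated in the abstract, the slickest packaging is to first state and prove a \emph{weighted fractional} strengthening --- assign to each edge a weight, replace each list by the uniform distribution on it, and track a single potential per edge --- and then read off \Cref{thm:main-simple} together with the hypergraph and correspondence versions; but I describe the core argument directly for graphs.

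\emph{Construction.} Build a partial proper $L$-edge-colouring over $T=T(\eps)$ rounds. After round $t$ let $G_t$ be the still-uncoloured subgraph, let $L_t(e)\subseteq L(e)$ be the colours not yet used on an edge incident to $e$, write $\ell_t(e):=|L_t(e)|$, and for uncoloured $e$ and $c\in L_t(e)$ let the \emph{colour-degree} $d_t(e,c)$ be the number of edges $e'\in E(G_t)$ incident to $e$ with $c\in L_t(e')$. In round $t$, each uncoloured edge $e$ (after being activated independently with a small probability $\beta=\beta(\eps)$, which keeps the error terms tame) picks a colour $\gamma(e)$ uniformly from $L_t(e)$; it becomes permanently coloured $\gamma(e)$ unless some incident uncoloured activated edge also chose $\gamma(e)$, after which the lists are updated. (Equivalently, one may follow Kahn and draw a random matching from the hard-core distribution on $G_t$ with activities read off from $\ell_t$, which automatically balances the edges.)

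\emph{Invariants.} The key step is to maintain, for all $t\le T$, local invariants of the shape: $\ell_t(e)\ge (1+\eps_t)\max_{c}d_t(e,c)$ for every uncoloured $e$, with $\eps_t\ge \eps/2$; $\ell_t(e)\ge \ell_{\min}(t)$, where $\ell_{\min}(t)$ decreases only by bounded factors and hence stays $\Omega(\delta)=\Omega(\ln^{25}\Delta)$; and $\sum_{e'\ni u,\ e'\in E(G_t)}1/\ell_t(e')\le 1/(1+\eps_t)$ at every vertex $u$ (which is exactly what makes an activated edge keep its tentative colour with probability at least $e^{-2/(1+\eps_t)}-o(1)$, a positive constant). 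The point is that, in expectation over a round, $\ell_t(e)$ and $d_t(e,c)$ contract by comparable factors --- a colour $c$ leaves $L_t(e)$, respectively an edge leaves the set counted by $d_t(e,c)$, essentially when an incident edge gets coloured $c$, resp.\ gets coloured or loses $c$ --- so the defining ratio is (weakly) self-improving, the $\eps/2$ cushion absorbing the second-order terms; pushing this for the right number of rounds also forces the \emph{ordinary} uncoloured degree at each endpoint below $\tfrac12\ell_T(e)$. For concentration: each of $\ell_{t+1}(e)$, $d_{t+1}(e,c)$ and the vertex sums is determined by independent per-edge choices with bounded Lipschitz dependence, and since all monitored quantities have magnitude $\Omega(\ln^{25}\Delta)$, Talagrand's inequality gives deviations $O(\sqrt{\ell_t\ln\Delta})=o(\ell_t)$ with failure probability $\exp(-\Omega(\ln^{2}\Delta))$, so a union bound over the $\le |V(G)|^{2}$ edge--colour pairs and the $T$ rounds preserves every invariant with positive probability; we fix such an outcome and continue.

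\emph{Finish and main difficulty.} After round $T$, every uncoloured edge $e$ has $|L_T(e)|$ larger than the number of uncoloured edges incident to it, so a greedy pass completes the colouring (alternatively, if one only drives the colour-degrees --- not the ordinary degrees --- below a small multiple of the list sizes, one last step via the Lov\'asz Local Lemma finishes). The main obstacle, and the whole content of the argument, is sustaining the \emph{local} invariants simultaneously for all edge--colour pairs: because the list lower bound is the varying quantity $\max\{\deg u,\deg v\}$ rather than a single global $\Delta$, one cannot ``equalise'' lists and degrees as in the classical proof, and must instead verify edge by edge that the ratio-type invariants regenerate, which forces careful coupled tracking of $\ell_t$, $d_t(\cdot,\cdot)$ and the vertex sums (and is exactly where the weighted abstraction earns its keep). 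This is also precisely where $\delta\ge\ln^{25}\Delta$ enters: it makes $|L(e)|\ge(1+\eps)\delta$ polylogarithmic in $\Delta$ for \emph{every} edge, hence large enough that the concentration bounds beat the union bound across all rounds --- without a polylogarithmic minimum degree, edges at low-degree vertices have tiny lists and colour-degrees, their per-round fluctuations are of the same order as the quantities themselves, and the nibble analysis collapses.
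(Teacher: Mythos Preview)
Your overall architecture matches the paper's: pass to a weighted formulation (the paper's Theorem~\ref{thm:maingraph}/\ref{thm:main}), run a nibble that maintains a local list-to-colour-degree ratio, and finish via the Local Lemma once that ratio is a large constant. Even the remark that ``the slickest packaging is \dots a weighted fractional strengthening'' is exactly how the paper proceeds.

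There is, however, a genuine gap in your concentration step. You propose a \emph{union bound} over the at most $|V(G)|^2$ edge--colour pairs against failure probability $\exp(-\Omega(\ln^2\Delta))$, but $|V(G)|$ is not bounded in terms of $\Delta$, so this fails outright on large sparse graphs. The paper instead applies the Lov\'asz Local Lemma \emph{within each round} (Claim~\ref{cla:main}): each bad event for a given edge depends only on choices at edges within bounded colour-distance, and the failure probability is made to beat that dependency count. This is not cosmetic --- it is exactly where the hypothesis $\delta\ge\ln^{25}\Delta$ is used, since the dependency count is governed by $\Delta$ (through the weight lower bound) while the concentration strength is governed by $\delta$. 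A second, subtler issue: your claim that $d_{t+1}(e,c)$ has ``bounded Lipschitz dependence'' is false as stated. If some activated $f\ni u$ with $u\in e$ is coloured $c$, then every edge at $u$ loses $c$ from its list simultaneously, so a single trial can shift $d_{t+1}(e,c)$ by as much as $\deg(u)$; thus $d_{t+1}(e,c)$ is genuinely not concentrated. The paper circumvents this by analysing a surrogate $N''(e,v,c)$ that ignores assignments through the shared vertex $v$ and then dominates the true colour-degree (see Section~\ref{sec:expectation-concentration}); your sketch does not address this, and a direct Talagrand application to $d_{t+1}(e,c)$ would not go through.
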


	{We remark that as demonstrated by Amini, Esperet, and Van Den Heuvel~\cite{AEV13} a weaker version of Theorem~\ref{thm:main-simple} with linear dependency between the maximum and minimum degree follows implicitly from Kahn's proof.}  
	
	\subsection{Weighted local generalizations for list colouring} 
	
	The key idea to prove Theorem~\ref{thm:main-simple} {is to show an analogous result about weighted colours}.  We assume without loss of generality that all colours $c \in \NATS$.  
	An assignment $(L,\mu)$ of \emph{weighted} lists of colours to the edges of $G$ consists of lists of colours $L(e)$ and weight functions $\mu(e)\colon L(e) \to (0,1]$.
	For convenience of notation, we let $\mu(e,c)$ denote $\mu(e)(c)$.  We write $\om{L(e)}:=\sum_{c \in L(e)} \mu(e,c)$ and $\om{A} :=\sum_{(e,c) \in A} \mu(e,c)$ for a set $A \subset E \times \NATS$. 
	
	\begin{theorem}\label{thm:maingraph}
		For every $\eps >0$, there exists ${\delta_0=\delta_0(\eps)}$ such that the following holds for all $\delta \geq \delta_0$:
		let $G =(V,E)$ be a graph with weighted lists of colours $(L,\mu)$. 
		If for every edge $e \in E$, we have
		\begin{enumerate}[\upshape (a)]
			\item $\mu(e)\colon L(e) \to \left[ \exp\left(-\delta^{1/25}\right) ,\delta^{-1}\right]$, and
			\item $\om{L(e)} \geq 1+\eps$, and
			\item for every vertex $v \in V$ and colour $c$, $\sum_{e\sim v} \mu(e,c) \leq 1$,
		\end{enumerate}
		then there is an $L$-edge-colouring of $G$.
	\end{theorem}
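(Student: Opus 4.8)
The plan is to prove Theorem~\ref{thm:maingraph} by the semi-random (nibble) method, following the architecture of Kahn's proof of Theorem~\ref{thm:kahn} but carefully localizing all of the quantities. The central object is a \emph{potential} or \emph{fractional assignment}: to each edge $e$ and colour $c \in L(e)$ we attach a quantity $p(e,c) \geq 0$ which one should think of as the probability that $e$ will eventually be coloured $c$, normalized so that $\sum_{c \in L(e)} p(e,c) = 1$ for every uncoloured edge, while condition (c) (with the weights) plays the role of the ``fractional matching'' constraint at each vertex. We run the process in rounds: in each round, every edge $e$ activates colour $c$ with probability proportional to $p(e,c)$ times a small activation parameter; an activated edge becomes genuinely coloured $c$ unless some incident edge also got coloured $c$, in which case we uncolour it (equalizing conflicts). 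We then remove coloured edges and the used colours from the lists of their neighbours, and update $p$ for the next round. The goal is to show that after $O(\log \delta)$ rounds (or some polylog number of rounds), with positive probability every edge either is coloured or has a list still dominating its remaining weighted degree by a $(1+\eps/2)$-type margin, at which point a simple greedy / bipartite-style completion (invoking, e.g., the ideas behind Theorem~\ref{BKW} or a direct Hall-type argument on the residual hypergraph) finishes the colouring.

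The key steps, in order, would be: (1) \textbf{Setup of the weighted fractional structure and invariants.} Define, for each edge $e$, the ``weighted list leftover'' $\om{L(e)}$ and the ``weighted degree'' $d_\mu(e) := \max_v \sum_{f \sim v} \mathbbm{1}[f \text{ uncoloured}]\,\mu(f,\cdot)$-type quantity, and set up the invariant that the ratio of available weighted list to weighted degree stays $\geq 1 + \eps_t$ with $\eps_t$ decreasing slowly. Hypothesis (a) — the two-sided bound $\mu(e,c) \in [\exp(-\delta^{1/25}), \delta^{-1}]$ — is exactly what lets us control both the number of colours (at least $\delta^{1-o(1)}$ of them, since weights are tiny and sum to $\geq 1+\eps$) and the number of incident edges sharing a colour (at most $\delta$ by (c) with the lower weight bound), which are the two parameters that must be large and bounded respectively for concentration to kick in. (2) \textbf{One round analysis.} Show that after one nibble, the expected new weighted list size and weighted degree both shrink by essentially the same factor $(1 - \beta)$ for the activation parameter $\beta$, with lower-order corrections; this is the weighted analogue of Kahn's ``expected ratio is preserved'' computation, and it is where the $(1+o(1))$ slack gets slowly consumed. (3) \textbf{Concentration.} Use Azuma/Talagrand-type concentration (bounded-differences on the random activation/colour choices, with Lipschitz and certifiable bounds coming from the fact that no colour is over-used at a vertex by (c)) to show all the relevant quantities — residual list weights, residual weighted degrees, and the $p$-values — are tightly concentrated around their expectations, with failure probability $\exp(-\delta^{\Omega(1)})$, small enough to union-bound over the $\leq n \cdot \delta$ edge-colour pairs. (4) \textbf{Iteration and finish.} Iterate for $T = \Theta(\log(1/\eps) \cdot \mathrm{polylog})$ rounds until every uncoloured edge has residual weighted list exceeding its residual weighted degree by a constant factor with all weights still in a controlled range, then apply a finishing lemma. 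Finally, observe that Theorem~\ref{thm:maingraph} implies Theorem~\ref{thm:main-simple} (and the hypergraph/correspondence versions) by choosing uniform weights $\mu(e,c) = 1/((1+\eps)\max\{\deg u, \deg v\})$ and checking (a)--(c) under $\delta(G) \geq \ln^{25}\Delta(G)$, which is precisely where the exponent $25$ and the bound $\exp(-\delta^{1/25})$ in (a) are calibrated to match.

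The main obstacle I expect is step (3) combined with the bookkeeping in step (2): in the classical (global) setting all edges have essentially the same degree and the same list size, so a single pair of parameters $(\Delta, |L|)$ drives everything; here degrees and list weights vary wildly across the graph, so one must track a whole profile of local quantities and ensure that the ``worst'' edge-colour pair still concentrates. In particular, getting the concentration inequalities to have a \emph{uniform} error term $\exp(-\delta^{\Omega(1)})$ — rather than an error that degrades for low-weight colours or high-degree vertices — requires the two-sided weight bound (a) to be used delicately: the lower bound $\exp(-\delta^{1/25})$ on weights is what prevents the relevant random variables from being too sensitive to a single colour choice, and this is exactly why the minimum degree must be at least polylogarithmic in $\Delta$ (so that $\delta^{1/25}$ is at least, say, $\ln^{1+\Omega(1)}\Delta$ and the error term beats the $n\delta$ union bound). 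A secondary subtlety is the finishing step: unlike Kahn's original argument which can afford to stop when lists exceed degrees by a tiny amount and then use a cheap greedy argument, here we must ensure the residual instance still satisfies a clean local condition; I would handle this by either iterating the nibble all the way down or by proving a standalone ``local Vizing-type'' completion lemma for the sparse residual hypergraph.
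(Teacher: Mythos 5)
Your overall architecture -- a semi-random nibble with locally tracked weighted list sizes and weighted colour-degrees, an invariant ratio that improves slowly over $O(\log\delta)$ rounds, Talagrand-type concentration, and a final completion step -- is the same as the paper's (which proves the more general Theorem~\ref{thm:main} via Lemma~\ref{lem:nibble} and Lemma~\ref{lem:edge-finisher}). However, two of your steps would fail as described. First, the finishing step: you propose a ``greedy / bipartite-style completion'' or a Hall-type argument once the residual weighted list exceeds the residual weighted colour-degree by a constant factor. This cannot work in the weighted setting. The condition $\om{L(e)} \geq 3\e\,\N$ together with $\om{N(e,v,c)} \leq \N$ says nothing about the \emph{unweighted} number of neighbours of $e$: since weights may be as small as $\exp(-\delta^{1/25})$, an edge can have $\exp(\delta^{\Omega(1)})$ incident edges whose lists contain a given colour, so a greedy pass (in which each already-coloured neighbour kills one colour from $L(e)$) can exhaust $L(e)$ long before $e$ is reached, and no Galvin/Borodin--Kostochka--Woodall-style kernel argument applies to the residual (non-bipartite, correspondence-twisted) instance. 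The paper's Finisher is instead an application of the Lov\'asz Local Lemma, with colours sampled from the distribution $\mu(v,c)/\L$ and bad events weighted by $\mu$; the threshold $3\e k$ is calibrated precisely for that LLL computation, not for greedy completion.

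Second, your concentration step (3) asserts that the residual weighted colour-degree concentrates by a bounded-differences argument. It does not: a single colour assignment to one edge $f$ incident to $v$ can block (and hence remove from the count) pairs $(g,\alpha)$ of total weight up to $\N$, so the variable $\om{N'(e,v,c)}$ has unbounded effect under changing one trial and is genuinely \emph{not} concentrated. The paper circumvents this by replacing $\om{N'(e,v,c)}$ with a surrogate $\om{N''(e,v,c)}$ that ignores the colour losses at the vertex $v$ itself (only an upper bound on the true quantity is needed), splitting it as a difference $\om{X_1}-\om{X_2}$ of monotone-certifiable variables, and applying an exceptional-outcomes version of Talagrand's inequality after first excluding the low-probability events that some edge receives too many colours or that one assignment blocks too many pairs. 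Your lower bound on the weights controls the \emph{size} of these exceptional events and the LLL dependency degree (which is $\exp(\delta^{\Theta(1)})$, not ``at most $\delta$'' as you state), but it does not by itself make the naive variable Lipschitz; without the surrogate variable and the exceptional-set machinery, step (3) has no proof.
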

	
	We see that Theorem~\ref{thm:main-simple} follows as a corollary.
	\begin{proof}[Proof of Theorem~\ref{thm:main-simple}]
		We obtain $\delta_0$ from Theorem~\ref{thm:maingraph} with input $\eps$. 
		Let $G$, $\delta=\delta(G) \geq \delta_0$, $\Delta=\Delta (G)$, and $L(e)$ be as in the statement of Theorem~\ref{thm:main-simple}.
		For every edge $e =uv $ and colour $c \in L(e)$, we set $\mu(e,c)=\frac{1}{\max \{\deg(u),\deg(v)\}}$. 
		Thus, for each edge $e=uv$ and colour $c \in L(e)$, we have $$\exp\left(-\delta^{1/25}\right) \leq  \Delta^{-1} \leq \mu(e,c) \leq \delta^{-1},$$ 
		$$\om{L(e)} \geq 	\tfrac{|L(e)|}{\max \{\deg(u),\deg(v)\}}  \geq  1+\eps ,$$
		and for every vertex $v$ and colour $c$,$$\sum_{e \sim v} \mu(e,c) = \sum_{e=wv \in E(G)} \frac{1}{\max \{\deg(w),\deg(v)\}} \leq \sum_{e \sim v} \frac{1}{ \deg(v)}\leq 1.$$
		Therefore by Theorem~\ref{thm:maingraph} there exists an $L$-edge-colouring of $G$.
	\end{proof}
	
	A \emph{$k$-uniform hypergraph} is a hypergraph with all edges containing exactly $k$ vertices.  A hypergraph is said to be \emph{linear} if all pairs of distinct edges intersect in at most one vertex. Kahn~\cite{JK} more generally showed that the List Colouring Conjecture holds asymptotically for linear, $k$-uniform hypergraphs. In fact our main technical theorem, Theorem~\ref{thm:main}, shows that Theorem~\ref{thm:maingraph} holds more generally for linear, $k$-uniform hypergraphs.

	\subsection{Correspondence colouring}

	\newcommand{\Lc}{\mathcal{L}}
	\newcommand{\Nc}{\mathcal{N}}
	Recently Molloy and Postle~\cite{MP22} expanded Kahn's results to correspondence colouring, a generalization of list colouring. We are able to prove Theorem~\ref{thm:main-simple} and Theorem~\ref{thm:maingraph} simultaneously by showing Theorem~\ref{thm:main}.  This is our main technical theorem and is a local version of Molloy's result. To state this formally, we need a few further definitions.

	\newcommand{\LL}{\mathcal{L}}
	Let $G=(V,E)$ be a linear, $k$-uniform hypergraph.  An \emph{edge correspondence} $\sigma$ of $G$ consists of integer permutations $\sigma_{e,f}=\sigma_{f,e}^{-1}$ for all edges $e \sim f$.
	For edges $e \sim f$ and colours $c,c' \in \NATS$, we say that $(e,c)$ \emph{blocks} $(f,c')$ if $\sigma_{e,f}(c) = c'$.
	We define an \emph{$(L,\sigma)$-colouring} to be a function $\gamma \colon E \to \NATS$ such that
	\begin{itemize}
		\item $\gamma(e) \in L(e)$ for every $e \in E$, and
		\item $(e,\gamma(e))$ does not block $(f,\gamma(f))$ for all edges $e \sim f$.
	\end{itemize}
	Finally, we define the \emph{colour neighbours}, denoted by $N_{G,L,\sigma}(e,v,c)$, as the set containing all pairs $(f,c') \in E \times \NATS$ such that $f$ is incident to $v$, $f \neq e$, $c' \in L(f)$ and $(f,c')$ blocks $(e,c)$.
	
	We are now ready to state our main technical theorem:
	
	\begin{theorem}\label{thm:main}
		For every $k \in \NATS$ and $\eps >0$, there exists $\delta_0={\delta_0}(\eps,k) \in \NATS$ such that the following holds for all $\delta \geq \delta_0$: 
		Let $G =(V,E)$ be a $k$-uniform linear hypergraph with an edge correspondence $\sigma$ and weighted lists of colours $(L,\mu)$. 
		If for every edge $e \in E$, we have
		\begin{enumerate}[\upshape (a)]
			\item \label{itm:con-main-thm-mu} $\mu(e)\colon L(e) \to \left[ \exp\left(-\delta^{1/25}\right) ,\delta^{-1}\right]$, and
			\item \label{itm:con-main-thm-L}  $\om{L(e)} \geq 1+\eps$, and
			\item \label{itm:con-main-thm-N} $\om{N_{G,L,\sigma}(e,v,c)} \leq 1$ for every vertex $v \in e$ and colour $c \in L(e)$,
		\end{enumerate}
		then there is an $(L,\sigma)$-colouring of $G$.
	\end{theorem}

	\subsection{The matching polytope and list edge-colourings}\label{sec:matching-polotype}
	We now discuss one further application of Theorem~\ref{thm:maingraph}. Let $G$ be a graph with $m$ edges and $\M(G)$ denote the set of all matchings of $G$.  For each matching $M \in \M(G)$, we assign an $m$-dimensional characteristic vector $\mathds{1}_M = (x_e)_{e \in E(G)}$, where $x_e=1$ if $e$ is an edge in $M$ and $x_e=0$ otherwise.  The \emph{matching polytope of $G$}, denoted $\MP(G)$, is the convex hull of all vectors of the form $\mathds{1}_M$ where $M\in \M(G)$. Implicitly, Kahn \cite{K00} proved the following result in this setting (appearing as Theorem 2.11 in Amini, Esperet, and Van Den Heuvel~\cite{AEV13}).
	\begin{theorem}[Kahn]\label{thm:mp}
		For every $\delta, \nu \in \mathbb{R}$, with $0 < \delta < 1$ and $\nu > 0$, there exists a constant $\Delta_0({\delta, \nu})$ such that for all $\Delta \geq {\Delta_0({\delta, \nu})}$, if $G$ is a graph with maximum degree at most $\Delta$ and $L$ is a list assignment of $E(G)$ such that
		\begin{enumerate}[\upshape (i)]
			\item for all $e \in E(G)$, $|L(e)| \geq \nu \cdot \Delta$, and 
			\item the vector $\vec{x} = \left(\frac{1}{|L(e)|}\colon e \in E(G)\right)$ is an element of $(1-\delta)\MP(G)$,
		\end{enumerate}
		then $G$ is $L$-edge-colourable.
	\end{theorem}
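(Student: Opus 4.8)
The plan is to derive Theorem~\ref{thm:mp} from the weighted graph statement Theorem~\ref{thm:maingraph} by reading the matching-polytope hypothesis as a weighting. Given $G$, $\Delta$, $\delta$, $\nu$ and $L$ as in Theorem~\ref{thm:mp}, set $\mu(e,c):=\tfrac{1}{(1-\delta)\,|L(e)|}$ for every edge $e$ and every colour $c\in L(e)$. Then $\om{L(e)}=\tfrac{1}{1-\delta}\ge 1+\delta$, so condition~(b) of Theorem~\ref{thm:maingraph} holds with its parameter ``$\eps$'' taken to be $\delta$. Moreover hypothesis~(ii) says $\tfrac{1}{1-\delta}\vec x\in\MP(G)$ for $\vec x=(1/|L(e)|)_e$; since every matching of $G$ meets each vertex in at most one edge, the inequality $\sum_{e\ni v}z_e\le 1$ holds throughout $\MP(G)$, so $\sum_{e\sim v}\mu(e,c)=\tfrac{1}{1-\delta}\sum_{e\ni v}x_e\le 1$ for every vertex $v$ and colour $c$, which is condition~(c). (Only the degree constraints of $\MP(G)$ are used here.) So conditions~(b) and~(c) come essentially for free, and the only real work is to arrange condition~(a): with the parameter $\delta_\star\ge\delta_0(\delta)$ that we feed into Theorem~\ref{thm:maingraph}, we need $\mu(e,c)\in\bigl[\exp(-\delta_\star^{1/25}),\,\delta_\star^{-1}\bigr]$ for all $e,c$.

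The upper bound in~(a) is immediate, since $|L(e)|\ge\nu\Delta$ gives $\mu(e,c)\le\tfrac{1}{(1-\delta)\nu\Delta}$, so it suffices to take $\delta_\star\le(1-\delta)\nu\Delta$; for $\Delta$ large this still leaves room for $\delta_\star\ge\delta_0(\delta)$. The lower bound is the one wrinkle: a list $L(e)$ might be so long that $\mu(e,c)$ drops below $\exp(-\delta_\star^{1/25})$. I would resolve this by truncation. Set $B:=\lceil 2\Delta/\delta\rceil$ and $E_0:=\{e\in E(G):|L(e)|>B\}$. Every edge $e=uv\in E_0$ satisfies $|L(e)|>B\ge 2\Delta>\deg(u)+\deg(v)-2$, hence the edges of $E_0$ can always be coloured greedily, say after all other edges, from their own lists. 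After deleting $E_0$, the degree inequalities $\sum_{e\ni v,\,e\notin E_0}x_e\le 1-\delta$ are inherited from $G$, so conditions~(b) and~(c) still hold for $G-E_0$ with the same weights, while now every remaining list satisfies $\nu\Delta\le|L(e)|\le B$. Thus on $G-E_0$ all weights $\mu(e,c)$ lie in an interval $[a/\Delta,\,b/\Delta]$ with $a,b>0$ depending only on $\delta$ and $\nu$, and taking $\delta_\star:=(1-\delta)\nu\Delta$ makes~(a) hold as soon as $\Delta$ is large enough that $(1-\delta)\nu\Delta\ge\max\{\delta_0(\delta),(\ln B)^{25}\}$, which eventually holds because $(\ln B)^{25}=o(\Delta)$. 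Theorem~\ref{thm:maingraph} then supplies an $L$-edge-colouring of $G-E_0$, which extends greedily over $E_0$ to an $L$-edge-colouring of $G$.

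The step needing care is the choice of the truncation threshold $B$: it must be large compared with $\Delta$ so that removing the long edges does not consume the slack $\delta$ in the degree constraints, yet each removed edge must retain a free colour, which forces $B>2\Delta$; both requirements are met by $B=\Theta(\Delta)$ with the implied constant depending on $\delta$. There is no tension with condition~(a), since the surviving lists have length only $O(\Delta)$ and we are free to choose the parameter $\delta_\star$ in Theorem~\ref{thm:maingraph} as large as a constant times $\Delta$, putting $\exp(-\delta_\star^{1/25})$ far below $1/\Delta$. The remainder is routine bookkeeping with the constants $\delta$, $\nu$ and $\delta_0(\delta)$.
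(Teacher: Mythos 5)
Your derivation matches the paper's: the paper obtains Theorem~\ref{thm:mp} (indeed the stronger Theorem~\ref{thm:mp2}) from Theorem~\ref{thm:maingraph} with exactly the weighting $\mu(e,c)=\frac{1}{(1-\delta)|L(e)|}$, checking condition (c) via the degree constraints in Edmonds' characterization and condition (b) via $\om{L(e)}=\frac{1}{1-\delta}$. The only thing you add is the explicit verification of condition (a) for edges with very long lists, handled by truncating them out and colouring them greedily at the end; the paper's one-paragraph derivation passes over this point, and your treatment of it is correct.
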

	Informally speaking, this is a local version where the hypothesis is that the list sizes ``reside'' in the interior of the matching polytope.  We remark that here there is a linear dependency between the maximum and minimum degree. Our main result Theorem~\ref{thm:main} implies this and more.  In our version the minimum degree is polylogarithmic in terms of the maximum degree:
	\begin{theorem}\label{thm:mp2}
		Theorem~\ref{thm:mp} holds with condition (i) replaced by:\\ for all $e \in E(G)$, $|L(e)| \geq \ln^{25}(\Delta)$.
	\end{theorem}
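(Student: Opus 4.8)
The plan is to deduce Theorem~\ref{thm:mp2} from the weighted graph result, Theorem~\ref{thm:maingraph}. Two preliminary reductions. Since $\MP(G)$ is convex and contains the origin, $(1-\delta)\MP(G)\subseteq(1-\delta')\MP(G)$ whenever $\delta\ge\delta'$, so we may assume $\delta\le 1/4$ (taking the new $\Delta_0(\delta)$ to be that for $\min(\delta,1/4)$). Since $\MP(G)$ is contained in the fractional matching polytope, condition (ii) gives $\sum_{e\sim v}1/|L(e)|\le 1-\delta$ for every vertex $v$. Finally, delete every edge $e$ with $|L(e)|>2\Delta$: such edges can be added back and coloured greedily in any order at the very end, since when $e=uv$ is coloured it has at most $\deg(u)+\deg(v)-2<2\Delta<|L(e)|$ forbidden colours; deleting edges preserves both the new condition (i) and the displayed fractional-matching bound. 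Hence we may assume $\ln^{25}\Delta\le|L(e)|\le 2\Delta$ for all $e$.

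Put $\eps:=\delta/2$, $c_0:=1+\eps$, $m:=\lceil\Delta^{1-\delta}\rceil$, and split the edges into $E_{\mathrm s}:=\{e:|L(e)|\le\Delta^{1-\delta}\}$ and $E_{\mathrm m}:=\{e:|L(e)|>\Delta^{1-\delta}\}$. On $e\in E_{\mathrm s}$ we keep all of $L(e)$ with the uniform weight $\mu(e,c):=c_0/|L(e)|$; on $e\in E_{\mathrm m}$ we first pass to a sublist $L'(e)\subseteq L(e)$ with $|L'(e)|=m$ and then set $\mu(e,c):=c_0/m$ on $L'(e)$. In both cases the weighted list size is $c_0\ge 1+\eps$, which is hypothesis~(b) of Theorem~\ref{thm:maingraph}. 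All weights lie in $\bigl[\tfrac{c_0}{2\Delta^{1-\delta}},\tfrac{c_0}{\ln^{25}\Delta}\bigr]$, and a routine computation shows that for large $\Delta$ one can pick a single $\delta_{\mathrm{thm}}$ of order $\ln^{25}\Delta$ with $\tfrac{c_0}{\ln^{25}\Delta}\le\delta_{\mathrm{thm}}^{-1}$ and $\tfrac{c_0}{2\Delta^{1-\delta}}\ge\exp(-\delta_{\mathrm{thm}}^{1/25})$; feasibility boils down to $c_0(1-\delta)^{25}<1$, which holds because $c_0=1+\delta/2$. This is exactly where the hypothesis $|L(e)|\ge\ln^{25}\Delta$ meets the exponent $1/25$ in Theorem~\ref{thm:maingraph}(a), and it gives hypothesis~(a).

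It remains to secure hypothesis~(c): $\sum_{e\sim v}\mu(e,c)\le 1$ for all $v,c$. The contribution of $E_{\mathrm s}$ to this sum is at most $c_0\sum_{e\sim v,\,e\in E_{\mathrm s}}1/|L(e)|$ deterministically. If the sublists $\{L'(e)\}_{e\in E_{\mathrm m}}$ are chosen uniformly at random and independently, then $\mathbf{Pr}[c\in L'(e)]=m/|L(e)|$ for $c\in L(e)$, so the expected contribution of $E_{\mathrm m}$ at $(v,c)$ equals $c_0\sum_{e\sim v,\,e\in E_{\mathrm m},\,c\in L(e)}1/|L(e)|$; thus the expectation of $\sum_{e\sim v}\mu(e,c)$ is at most $c_0\sum_{e\sim v}1/|L(e)|\le c_0(1-\delta)=1-\delta/2-\delta^2/2$. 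The $E_{\mathrm m}$-contribution is a sum of at most $\deg(v)\le\Delta$ independent variables each bounded by $c_0/\Delta^{1-\delta}$, so by Hoeffding it exceeds its mean by $\delta/2$ with probability only $\exp(-\Omega_\delta(\Delta^{1-2\delta}))$ --- here $\delta\le 1/4<1/2$ is used. Since we reduced to $|L(e)|\le 2\Delta$, at most $2\Delta^2$ colours are relevant at any vertex, so each of these bad events is mutually independent of all but $O(\Delta^3)$ of the others, and the Lovász Local Lemma yields a choice of the $L'(e)$ with $\sum_{e\sim v}\mu(e,c)<1$ for all $v,c$. Applying Theorem~\ref{thm:maingraph} (with this $\eps$ and $\delta_{\mathrm{thm}}$, which exceeds $\delta_0(\eps)$ for large $\Delta$) to the graph on $E_{\mathrm s}\cup E_{\mathrm m}$ with lists $L$ on $E_{\mathrm s}$ and $L'$ on $E_{\mathrm m}$ produces an $L$-edge-colouring, which we extend greedily over the deleted large-list edges.

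I expect the main obstacle to be this tier interaction. The tiers cannot be coloured sequentially: after colouring $E_{\mathrm s}$ a medium-tier edge may have lost $\Theta(\Delta)$ colours while having started with only slightly more than $\Delta^{1-\delta}$ colours, and symmetrically a small-tier edge has only $\ln^{25}\Delta$ colours to begin with. Nor can Theorem~\ref{thm:maingraph} be run once on the whole graph with the naive weights $c_0/|L(e)|$, since those then span a ratio of order $\Delta/\ln^{25}\Delta$, which just exceeds what the exponent $1/25$ in~(a) permits together with $c_0>1$. This forces the simultaneous treatment with a common weight scale obtained through the random sublist selection, and verifying condition~(c) for that selection --- via concentration together with the Local Lemma, relying on the reduction to polynomially many relevant colours --- is the technical heart of the argument.
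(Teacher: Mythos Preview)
Your argument is correct, but the paper's route is far shorter: it sets $\mu(e,c)=\frac{1}{(1-\delta)|L(e)|}$ and $\eps=\frac{\delta}{1-\delta}$, observes that the degree constraints implied by $\vec{x}\in(1-\delta)\MP(G)$ give conditions~(b) and~(c) of Theorem~\ref{thm:maingraph} immediately, and declares Theorem~\ref{thm:mp2} a direct corollary --- no tiers, no subsampling, no Local Lemma. The difference is that the paper does not explicitly verify condition~(a), and you are right that with this naive choice of weights it does not literally hold: one would need some $\delta_{\mathrm{thm}}$ with $\ln^{25}\bigl((1-\delta)\max_e|L(e)|\bigr)\le\delta_{\mathrm{thm}}\le(1-\delta)\min_e|L(e)|$, and when $\min_e|L(e)|=\ln^{25}\Delta$ while $\max_e|L(e)|=\Theta(\Delta)$ this interval is empty, precisely because the factor $\frac{1}{1-\delta}>1$ tips the balance the wrong way. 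Your random-sublist-plus-LLL construction is a valid way to repair this, and all of your estimates (the Hoeffding bound, the $O(\Delta^3)$ dependency count after capping $|L(e)|\le 2\Delta$, the feasibility condition $c_0(1-\delta)^{25}<1$) check out. What you gain is a rigorous verification of~(a); what the paper's approach buys is a two-line proof --- it is implicitly content to absorb this constant-factor discrepancy into the non-sharp exponent~$25$ (indeed, replacing $\exp(-\delta^{1/25})$ by $\exp(-\delta^{1/24})$ in Theorem~\ref{thm:maingraph}(a), which the proof of Theorem~\ref{thm:main} equally supports, already makes the direct argument go through). Your machinery is heavier than the issue really warrants, but it is not wrong.
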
 
	
	In 1965, Edmonds~\cite{E65} showed the following characterization of the matching polytope.  
	A vector $\vec{x} = (x_e)$ is in $\MP(G)$ if and only if all of the following hold:
	$x_e \geq 0$ for all $x_e$,  $\sum_{e\sim v} x_e \leq 1$ for every $v \in V(G)$, and for every $W \subseteq V(G)$ with $|W|\geq 3$ and $|W|$ odd, $\sum_{e \in E(W)} x_e \leq \frac{1}{2}(|W|-1).$  Clearly, $\vec{x}$ with $x_e = \mu(e,c) = \frac{1}{(1-\delta)|L(e)|}$ satisfies the first two conditions of Edmonds' characterization if and only if $\mu(e,c) = \frac{1}{(1-\delta){|L(e)|}}$ satisfies conditions $(b)$ and $(c)$ of Theorem~\ref{thm:maingraph} for $\varepsilon = \frac{\delta}{1 - \delta}$.
	Thus, Theorem~\ref{thm:mp2} follows directly from Theorem~\ref{thm:maingraph} and holds more generally, as we do not need $\vec{x}$ to satisfy the cut condition in Edmonds' characterization.

	\medskip
	The rest of the paper is organized as follows.
	In the next section, we formulate our two main technical lemmas (Lemma~\ref{lem:edge-finisher} and~\ref{lem:nibble}) and use them to prove Theorem~\ref{thm:main}.
	In Section~\ref{sec:tools} we introduce a few probabilistic tools that will be helpful later on (see also Appendix~\ref{sec:appendix}).
	In Section~\ref{sec:nibble}--\ref{sec:expectation-concentration}, we give a proof of Lemma~\ref{lem:nibble} and in Section~\ref{sec:finisher} we prove Lemma~\ref{lem:edge-finisher}.
	We finish by stating a few open problems and questions in Section~\ref{sec:open-problems}.

	\section{Proof of main result}
	We start with a sketch of the proof of Theorem~\ref{thm:main}.
	A quick application of the Lov\'asz Local Lemma (see Section~\ref{sec:tools}) shows that we can find the desired colouring, provided that the weighted size of every list $L(e)$ is large enough with respect to the weighted number of possible conflicts $N(e,v,c)$ for each $v \in e$ and $c \in L(e)$.
	This is made precise in the following lemma, whose proof is deferred to Section~\ref{sec:finisher}.
	\begin{lemma}[Finisher]\label{lem:edge-finisher}
		Let $G =(V,E)$ be a $k$-uniform linear hypergraph with an edge correspondence $\sigma$ and weighted lists of colours $(L,\mu)$ assigned to its edges.
		Suppose that the lists have finite size.
		Let $\L,\N >0$ such that, for all $e \in E$, all of the following hold:
		\begin{enumerate}[\upshape (i)]
			\item $\L/\N\geq 3\e k$, 
			\item {$\om{L(e)} \geq \L$},
			\item $\om{N_{G,L,\sigma}(e,v,c)}\leq \N$ for all $v \in e$ and $c \in L(e)$.
		\end{enumerate}
		Then $G$ is $(L,\sigma)$-colourable.
	\end{lemma}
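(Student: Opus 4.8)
The plan is to apply the Lov\'asz Local Lemma directly to a random colouring where each edge $e$ independently picks a colour $\gamma(e) \in L(e)$ with probability proportional to its weight, i.e. $\Pr[\gamma(e) = c] = \mu(e,c)/\om{L(e)}$. For each pair of incident edges $e \sim f$, define the bad event $A_{e,f}$ that $(e,\gamma(e))$ blocks $(f,\gamma(f))$; clearly $\gamma$ is an $(L,\sigma)$-colouring precisely when no bad event occurs. First I would bound $\Pr[A_{e,f}]$: fixing the choice $\gamma(e) = c$, the edge $f$ blocks $e$ only if $\gamma(f) = c'$ for the unique $c'$ with $\sigma_{f,e}(c') = c$, which happens with probability $\mu(f,c')/\om{L(f)} \le \mu(f,c')/\L$ (using (ii)). Summing over the conditioning on $c$ weighted by $\mu(e,c)/\om{L(e)}$ and over the relevant $c'$, one gets $\Pr[A_{e,f}] \le \sum_{c}\frac{\mu(e,c)}{\om{L(e)}}\cdot\frac{\mu(f,\sigma_{f,e}^{-1}(c))}{\L}$; summing this over all $f \sim e$ through a fixed vertex $v$ and all $c$ telescopes into $\om{N_{G,L,\sigma}(e,v,c)}$-type sums, which are at most $\N$ by (iii).

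Next I would set up the dependency graph: $A_{e,f}$ depends only on the random choices $\gamma(e)$ and $\gamma(f)$, so it is mutually independent of all events $A_{e',f'}$ with $\{e',f'\} \cap \{e,f\} = \es$. Thus each event $A_{e,f}$ is adjacent to at most those $A_{e,g}$ with $g \sim e$ and $A_{f,h}$ with $h \sim f$. To control the number of such neighbours I want a uniform weighting trick for the symmetric LLL: assign to each event $A_{e,f}$ a value $x_{e,f}$ built from the weights, or more cleanly, prove the standard inequality $\Pr[A_{e,f}] \le \tfrac14 \cdot \min$ of the relevant ``mass'' and invoke the weighted/lopsided local lemma. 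The cleanest route is probably the following: for the LLL it suffices to exhibit reals $t_{e,f} \in (0,1)$ with $\Pr[A_{e,f}] \le t_{e,f}\prod_{(e',f') \sim (e,f)}(1 - t_{e',f'})$. Choosing $t_{e,f}$ proportional to $\mu(e,\cdot)\mu(f,\cdot)/\L^2$ — summed appropriately over colour pairs, or rather choosing $t_{e,f} = \frac{2}{\L}\big(\text{a weighted count}\big)$ — the product side becomes $\exp(-O(\sum t)) \ge \exp(-O(\N/\L))$, and the hypothesis $\L/\N \ge 3\e k$ is exactly what makes $\Pr[A_{e,f}] \le t_{e,f} \cdot e^{-\text{(sum over neighbours)}}$ hold, since each of the $2k$ vertices incident to $e$ or $f$ contributes a sum bounded by $\N/\L \le 1/(3\e k)$, giving total $\le 2/(3\e) < 1$.

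More concretely, I would use the variant of the LLL where we only need $\Pr[A_{e,f}] \le (1/(2D))$ with $D$ a bound on degree in the dependency graph fails here because $D$ can be huge; instead the weighted version is essential, since the many incident edges $f$ have small individual weights. So the key computation is: for a fixed $e$ and $v \in e$, $\sum_{f \sim v,\, f\neq e}\sum_{c}\frac{\mu(e,c)}{\om{L(e)}}\frac{\mu(f, \sigma_{f,e}^{-1}(c))}{\om{L(f)}} \le \frac1{\L}\sum_c \frac{\mu(e,c)}{\om{L(e)}} \cdot \om{N_{G,L,\sigma}(e,v,c)} \le \N/\L$, and since each $e$ meets $k$ vertices and each pair $e\sim f$ shares exactly one vertex (linearity), the total "weight mass" seen by $A_{e,f}$ from both sides is at most $2k \cdot \N/\L \le 2/(3\e)$. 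Plugging $t_{e,f}$ equal to (essentially) $\e$ times the individual term and using $1 - x \ge e^{-\e x/(\e-1)}$-type estimates closes the LLL condition.

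The main obstacle I anticipate is the bookkeeping in the weighted LLL: standard statements of the local lemma are for a fixed set of events with a single number per event, but here the natural "charge" on $A_{e,f}$ depends on the colour, so I need either a colour-indexed family of events $A_{e,f,c}$ (one per potential conflict colour) to which a weighted LLL with per-event weights $\mu(e,c)\mu(f,c')/\L^2$ applies, or a careful direct verification of Shearer-type / asymmetric LLL conditions. I expect the clean writeup defines events $A_{e,f}$ only (not colour-indexed) but bounds their probabilities via the sums above, then applies the asymmetric LLL with $x_{e,f}$ chosen as a constant multiple of $\max_v \frac1\L\big(\text{something}\big)$ — and the verification that $\prod (1-x) $ stays bounded below uses precisely that the weighted conflict sums are $\le \N$ and $\L/\N \ge 3\e k$. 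I'd also need to handle the finiteness of lists (hypothesis that lists have finite size) so the probability space is well-defined and the dependency graph is locally finite enough for the LLL to apply.
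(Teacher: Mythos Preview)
Your approach is correct and matches the paper's: a weighted random colouring plus the asymmetric Lov\'asz Local Lemma, with the key estimate that the total conflict mass at each of the $k$ vertices of an edge is at most $\N/\L \le 1/(3\e k)$. The paper carries this out via the colour-indexed events you flag as the cleaner option (one bad event per blocking pair $(e,f,c,c')$ with weight $x$ of order $\mu(e,c)\mu(f,c')/(\L\N)$), and first abstracts to a vertex-colouring statement applied to the line graph; your edge-pair formulation would work too but, as you suspected, the colour-indexed version makes the LLL bookkeeping transparent.
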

	From the assumptions of Theorem~\ref{thm:main}, it follows that the ratio between the weighted sizes of $L(e)$ and $N(e,v,c)$ is at least $1+\eps$, which is too low to apply Lemma~\ref{lem:edge-finisher} right away.
	The proof of Theorem~\ref{thm:main} therefore begins with a `nibbling' argument, i.e.~an iterative approach, where, in each step, we colour a few further edges improving the above mentioned ratio by a factor of roughly $1+\frac{\eps}{\ln {\delta}}$.
	Hence, after $O(\ln \delta)$ iterations, we can finish the colouring by with Lemma~\ref{lem:edge-finisher}.

	In each iteration, we use the \emph{naive colouring procedure} to find the desired colouring.
	This method consists of two steps:
	\begin{enumerate}[\upshape (I)]
		\item \label{prod:assign-simple} we randomly assign to each edge a small (possibly empty) set of permissible colours from its list independently from other edges, and 
		\item \label{prod:remove-conflict-simple} we resolve conflicts between these assignments by uncolouring some of the edges.
	\end{enumerate}
	
	A concentration analysis shows that with positive probability the resulting $(L, \sigma)$-colouring has the desired properties.  
	The following lemma formalizes these ideas and is proved in Section~\ref{sec:nibble}.
	A \emph{partial} $(L,\sigma)$-colouring of $G$ is an $(L,\sigma)$-colouring of a subgraph of $G$ from the same lists. 
	
	\begin{lemma}[Nibbler]\label{lem:nibble}
		For every $0 < \eps \leq 1/4$ and $k \in \NATS$ there exists $\delta \in \NATS$ with the following properties.
		Let $G =(V,E)$ be a $k$-uniform linear hypergraph with edge correspondence $\sigma$ and weighted lists of colours $(L,\mu)$ assigned to its edges. 	Suppose that for  $\L,\N > \delta$, all of the following hold:
		\begin{enumerate}[\upshape (a)]
			\item \label{itm:bound-weights} $\mu(e)\colon L(e) \to [\exp\left({- \N^{1/20}}\right),1]$,
			\item \label{itm:bound-ratio} $3\e k > \frac{\L}{\N}>  1+\eps $,
			\item \label{itm:bound-list-size} $\om{L(e)}= \L$ for every edge $e  \in E$,
			\item \label{itm:bound-colour-neighbours} $\om{N_{G,L,\sigma}(e,v,c)} \leq \N$  for every edge $e \in E$, vertex $v \in e$ and colour $c \in \NATS$.
		\end{enumerate}
		
		Then there is a partial $(L,\sigma)$-colouring of $G$ with the following properties.
		Let $G'=(V',E') \subset G$ be the subgraph induced by the uncoloured edges.
		For each $e \in E'$, let $L'(e)$ be obtained from $L(e)$ 
		by removing the colours $c$ such that there is an edge $f$ adjacent to $e$ in $G$ such that
		$(e, c)$ is blocked by $(f, c' )$, where $c'$ is the colour used on $f$.
		Then there exist weights $\mu'(e) \colon L'(e) \to (0,1]$ and numbers $\L',\N' >0$ with 
		\begin{enumerate}[\upshape (a$'$)]
			\item \label{itm:nibble-output-L'-N'-lower-bound} $\N' \geq \left(1-\tfrac{5k}{\ln \N}\right) \N$,
			\item \label{itm:nibble-output-ratio-L/N} $\frac{\L'}{\N'} \geq \left(1+\tfrac{\eps}{16\ln \N}\right) \frac{\L}{\N} $
		\end{enumerate}
		such that, for every edge $e \in E'$, vertex $v \in e$ and colour $c \in L'(e)$, all of the following hold:
		\begin{enumerate}[\upshape (a$'$)]
			\addtocounter{enumi}{2}
			\item \label{itm:nibble-output-colour-lists} $|L'(e)|_{\mu'} = \L'$,
			\item \label{itm:nibble-output-enemy-lists} $|N_{G',L',\sigma}(e,v,c)|_{\mu'}\leq \N'$,
			\item \label{itm:nibble-output-mu}$\left(1-\frac{2}{\L}\right) \mu(e,c) \leq \mu'(e,c) \leq \mu(e,c)$.
		\end{enumerate}
	\end{lemma}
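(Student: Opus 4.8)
The plan is to analyse a single round of the naive colouring procedure and show that, with positive probability, the partial colouring it produces has all of the required properties. Fix the \emph{nibble probability} $p:=\frac{\eps}{8\ln\N}$; since (\ref{itm:bound-ratio}) places $\tfrac\N\L$ in the bounded interval $\bigl(\tfrac1{3\e k},\tfrac1{1+\eps}\bigr)$, once $\delta$ (hence $\N$) is large the quantities $p$, $p/\L$ and $pk\N/\L$ are all $o(1/\ln\N)$. For step (\ref{prod:assign}) each edge $e$ independently forms $S(e)\subseteq L(e)$ by including each $c\in L(e)$ with probability $\tfrac p\L\mu(e,c)$, and if $S(e)=\{c\}$ we tentatively colour $e$ with $\gamma(e):=c$ (by (\ref{itm:bound-list-size}) this happens with probability $(1+o(1))\tfrac p\L\mu(e,c)$, and $e$ is left uncoloured otherwise); for step (\ref{prod:remove-conflict}) we uncolour every coloured edge $e$ such that $(e,\gamma(e))$ blocks $(f,\gamma(f))$ for some coloured neighbour $f$ --- by linearity such an $f$ meets $e$ in a single vertex, so it conflicts with $(e,c)$ exactly when $\gamma(f)=\sigma_{e,f}(c)$. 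Write $G'=(V',E')$ for the uncoloured edges and, for each $e\in E'$, obtain $L'(e)$ by deleting from $L(e)$ the colour each coloured neighbour forbids (and, where indicated below, a few further colours).

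Next I would compute first moments. Since $(e,c)$ has on each neighbour $f$ of $e$ a \emph{unique} conflicting assignment $(f,\sigma_{e,f}(c))$, independence of the choices across edges gives (conditioning on $e\in E'$ alters things by a factor $1+o(1)$)
\[
\Pr[\,c\in L'(e)\,]=(1+o(1))\exp\!\Bigl(-\tfrac p\L\textstyle\sum_{v\in e}\om{N_{G,L,\sigma}(e,v,c)}\Bigr),
\]
hence $\Exp\bigl[\om{L'(e)}\bigr]=(1+o(1))\sum_{c\in L(e)}\mu(e,c)\exp(-\tfrac p\L\sum_{v\in e}\om{N_{G,L,\sigma}(e,v,c)})$. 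Similarly $(f,c')\in N_{G,L,\sigma}(e,v,c)$ persists into $N_{G',L',\sigma}(e,v,c)$ iff $f$ stays uncoloured \emph{and} $c'$ is not deleted from $L(f)$: a coloured edge retains its colour with probability $1-o(1)$ (its unique conflicting value on each neighbour is assigned with probability $O(p/\L)$), so the first event fails with probability $(1-o(1))p$, and the second has probability $(1+o(1))\exp(-\tfrac p\L\sum_{w\in f}\om{N_{G,L,\sigma}(f,w,c')})$. Thus, relative to the exponential factor governing $\Exp[\om{L'(e)}]$, the expectation $\Exp[\om{N_{G',L',\sigma}(e,v,c)}]$ carries an extra factor $1-p-o(p)$, and this is precisely the gain: taking $\L'$, $\N'$ to be (slightly shaded) versions of these expectations, the exponential factors cancel in the ratio and $\L'/\N'\ge\tfrac1{1-p}\cdot\tfrac\L\N\cdot(1-o(p))\ge(1+\tfrac\eps{16\ln\N})\tfrac\L\N$, while bounding each exponent by $pk\N/\L$ gives $\N'\ge(1-p-pk\N/\L-o(p))\N\ge(1-\tfrac{5k}{\ln\N})\N$.

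Turning this into a proof has two hard points, the first being the main obstacle. \emph{(i) Coupled contraction.} The exponent $\sum_{v\in e}\om{N_{G,L,\sigma}(e,v,c)}$ in the list estimate need not match the exponents $\sum_{w\in f}\om{N_{G,L,\sigma}(f,w,c')}$ attached to its colour-neighbours, so \emph{a priori} a list might contract faster than the colour-neighbour sets it is meant to beat, ruining the ratio once $\L'$ and $\N'$ are forced to be the minimal surviving list-weight and maximal surviving colour-neighbour-weight. One must show, exploiting the slack in (\ref{itm:bound-ratio}) together with the linearity of $G$ and the structure of $\sigma$, that list-weights and colour-neighbour-weights contract in lock-step across all of $G$ --- the local analogue of the entropy-type accounting in Kahn's argument; I would do this by carrying along an auxiliary fractional relaxation that the nibble preserves. \emph{(ii) Concentration.} Each of $\om{L'(e)}$ and $\om{N_{G',L',\sigma}(e,v,c)}$ is a function of the independent per-edge choices that is $1$-Lipschitz in each coordinate (a neighbour forbids at most one colour of $e$, of weight $\le1$ by (\ref{itm:bound-weights})). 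Applying Talagrand's inequality naively would introduce the ambient degree, which is not controlled here; instead one certifies the events using the weights --- by (\ref{itm:bound-colour-neighbours}) at most $\N/\tau$ neighbours carry weight $\ge\tau$ --- and splits the contribution by weight scale, so that the deviations to beat are merely polynomial in $\N$. Since (\ref{itm:bound-weights}) gives $|L(e)|\le\L\,\e^{\N^{1/20}}$, the resulting failure probabilities are $\exp(-\N^{\Omega(1)})$, small enough to combine by the Lov\'asz Local Lemma (Section~\ref{sec:tools}) over the bad events ``$\om{L'(e)}<\L'$'' and ``$\om{N_{G',L',\sigma}(e,v,c)}>\N'$'', each of which depends only on a bounded neighbourhood.

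Finally I would produce $\mu'$ and the identity $|L'(e)|_{\mu'}=\L'$. With $\L'$ fixed as the common value (by (i)) just below $\Exp[\om{L'(e)}]$, concentration gives $\om{L'(e)}\ge\L'$ for every surviving list; I would then delete a few of the remaining colours of $e$ (each of weight $\le1$) until the overshoot is less than $1$, and set $\mu'(e,c):=\mu(e,c)\cdot\L'/\om{L'(e)}$ on the colours that survive. This gives $|L'(e)|_{\mu'}=\L'$ exactly and, since $\om{L'(e)}=\Theta(\L)$ with $\L>\delta$, the correction factor lies in $[1-\tfrac2\L,1]$, which is (e$'$). Deleting colours and decreasing weights only shrinks every colour-neighbour sum, so with $\N'$ as above we keep $|N_{G',L',\sigma}(e,v,c)|_{\mu'}\le\N'$, which is (d$'$), while (a$'$) and (b$'$) hold by the choices of $\N',\L'$. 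I expect (i) --- showing the ratio genuinely improves for \emph{every} local configuration allowed by the hypotheses --- to be by far the hardest part; (ii) is routine given the Lipschitz and certifiability bounds, and the final equalisation is bookkeeping.
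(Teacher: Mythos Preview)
Your overall architecture is right --- naive colouring, first-moment computation, Talagrand plus Local Lemma for concentration, and a final truncation/rescaling step to pin $|L'(e)|_{\mu'}$ exactly at $\L'$ --- and it matches the paper. You have also correctly located the one genuine difficulty: different pairs $(e,c)$ see different values of $\sum_{v\in e}\om{N_{G,L,\sigma}(e,v,c)}$, so the survival probabilities $\exp(-\tfrac{p}{\L}\sum_v\om{N(e,v,c)})$ are not uniform, and there is no single target value $\L'$ that all lists contract to. Your proposed fix, an ``auxiliary fractional relaxation that the nibble preserves'', is not spelled out, and I do not see how to make it work: the issue is not that the ratio fails to improve on average, but that the conclusion demands a \emph{single} pair $(\L',\N')$ with $|L'(e)|_{\mu'}=\L'$ for every $e$, and your expectations genuinely differ edge by edge.

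The paper resolves this with a device you are missing: an \emph{equalizing coin flip}. After the uncolouring step, for each $(e,v,c)$ one flips an independent biased coin and, on failure, deletes $c$ from $L'(e)$ (and from $e$ if it was assigned). The bias $\Eq(e,v,c)$ is chosen so that the probability $c$ survives at $v$ is \emph{exactly} a fixed constant $\K=1-\tfrac{\N}{\L}\cdot\tfrac{1+\eps/8}{\ln\N}$, regardless of $\om{N(e,v,c)}$; since the $k$ vertices of $e$ act independently by linearity, the overall survival probability becomes uniformly $\K^k$. This makes $\Exp[\om{L'(e)}]=\L\cdot\K^k$ the same for every edge, so one can set $\L'=\L\cdot\K^k-\N^{2/3}$ and $\N'=\N\cdot\K^{k-1}(1-\tfrac{1-\eps/8}{\ln\N}\K^k)+\N^{2/3}$ and verify (a$'$)--(b$'$) by direct computation. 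The coin flip is ``wasteful'' --- it discards colours that were not actually in conflict --- but the waste is absorbed into the $\eps$-slack in hypothesis~(\ref{itm:bound-ratio}), and it completely eliminates the coupling problem you flagged.

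A secondary point: your concentration sketch for $\om{N'(e,v,c)}$ is too optimistic. The variable $\om{N'(e,v,c)}$ is \emph{not} concentrated, because a single assignment to one edge $f\ni v$ can block many pairs $(g,\alpha)\in N(e,v,c)$ at once. The paper sidesteps this by bounding $\om{N'(e,v,c)}$ above by a modified variable $\om{N''(e,v,c)}$ that ignores losses at $v$ itself, writes $\om{N''}=\om{X_1}-\om{X_2}$, and controls each $X_i$ separately via Talagrand with exceptional events (\textbf{TooManyColours}, \textbf{BlocksTooMany}) ruling out the rare high-influence outcomes. Your weight-scale splitting idea is in the right spirit but would need these ingredients to go through.
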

	Now we are ready to prove Theorem~\ref{thm:main}.
	
	\begin{proof}[Proof of Theorem~\ref{thm:main}]
		Let $\delta_{L.\ref{lem:nibble}}=\delta_{L.\ref{lem:nibble}}(\eps,k)$ be obtained from Lemma~\ref{lem:nibble} with input $(\eps,k)$ where we assume without loss of generality that $\delta_{L.\ref{lem:nibble}}$ is large enough so that $\frac{5k}{\ln \delta_{L.\ref{lem:nibble}}} \le \frac{1}{2}$ (i.e.~$\delta_{L.\ref{lem:nibble}} \ge e^{10k}$) and $400\eps^{-1}k\ln \delta_{L.\ref{lem:nibble}} \leq \delta_{L.\ref{lem:nibble}}$.
		We choose $\delta= \delta (\eps,k)$ sufficiently large such that in particular $\delta \geq \e^{1000k^2\eps^{-1}} \delta_{L.\ref{lem:nibble}}$.
		Let $G$, $\sigma$ and $(L,\mu)$ be as in the statement of Theorem~\ref{thm:main}.
		
		We will define, for each $0 \leq i \leq 100\eps^{-1}k  \ln \delta_{L.\ref{lem:nibble}}$, a partial $(L,\sigma)$-colouring $\gamma_i$ of $G$ such that for the graph $G_i=(V_i,E_i)$ of edges not coloured by $\gamma_i$ the following holds.
		There exist parameters $\L_{i}, \N_{i}$ and weighted lists of colours $(L_{i},\mu_i)$ for $G_i$ such that, for every edge $e \in E_i$, vertex $v \in e$ and colour $c \in L_i(e)$, all of the following hold:
		\begin{enumerate}[\upshape (a$''$)]
			\item \label{itm:main-thm-n-bound} $\delta \geq \N_i \geq \left(1-\tfrac{5k}{\ln \delta_{L.\ref{lem:nibble}}}\right)^i \delta$, 
			\item \label{itm:proof-main-thm-ratio} $\frac{\L_i}{\N_i} \geq  \min\left\{ 3\e k,\left(1+\tfrac{\eps}{16\ln \delta}\right)^i \left(1+\eps\right)\right\}$, 
			\item \label{itm:main-thm-list-bound} $|{L_i(e)} |_{\mu_i} = \L_i$, 
			\item \label{itm:main-thm-deg-bound}  $|{N_{G_i,L_i,\sigma}(e,v,c)} |_{\mu_i}\leq \N_i$, 
			\item  \label{itm:main-thm-mu-bound} $\mu_i(e)\colon L_i(e)\to  \left[\left(1-\frac{2}{\delta_{L.\ref{lem:nibble}}}\right)^{i+1} \exp\left(-\delta^{1/24}\right),1\right]$.
		\end{enumerate}
		
		{To show that this is possible, we proceed inductively.
			For $i =0$, set $G_0 = G$, $\gamma_0 = \es$, $\L_0=(1+\eps)\delta$ and $\N_0 = \delta$. 
			Note that~\ref{itm:main-thm-n-bound} and~\ref{itm:proof-main-thm-ratio} hold trivially.
			By assumption, we have $|L(e)|_{\mu'} \geq \L_0$, whereas part~\ref{itm:main-thm-list-bound} requires equality.
			We guarantee equality by truncating the lists $L(e)$ and (mildly) rescaling the weight function $\mu' := \delta \mu$ as follows.  
			For every edge $e \in E$, let $L_0(e)$ be a copy of $L(e)$.
			We truncate $L_0(e)$ as follows.
			While there is a colour $c \in L_0(e)$ with $|L_0(e)\sm \{c\}|_{\mu'} \geq \L_0$, we delete $c$ from $L_0(e)$.
			As $\mu'  \leq 1$, we have $\L_0 \leq |L_0(e)|_{\mu'}  < \L_0+1$ after this procedure.
			We then scale the weights by setting $\mu_0(e,c) = \frac{\L_0}{|{L_0(e)|_{\mu'}}} \mu'(e,c)$ for every colour $c \in L_0(e)$.
			It follows that $|L_0(e)|_{\mu_0} = \L_0$ as required for~\ref{itm:main-thm-list-bound}.
			Moreover, we have
			\begin{align}\label{equ:truncate}
				\left(1-\frac{2}{\L_0}\right) \mu'(e,c) \leq \left(\frac{\L_0}{\L_0+1}\right) \mu'(e,c) \leq \mu'(e,c) \leq \mu_0(e,c)
			\end{align}
			for every uncoloured edge $e \in E$  and colour $c \in L_0(e)$.
			Now,~\ref{itm:main-thm-deg-bound} holds since Theorem~\ref{thm:main}\ref{itm:con-main-thm-N} holds by assumption and the right side of \eqref{equ:truncate}. Finally,~\ref{itm:main-thm-mu-bound} holds since $\delta\ge 1$ and since $\delta_0 \in \mathbb{N}$, since Theorem~\ref{thm:main}\ref{itm:con-main-thm-mu} holds by assumption and by the left side of \eqref{equ:truncate}.}

		Now suppose \ref{itm:main-thm-n-bound}--\ref{itm:main-thm-mu-bound} hold for $i$ with $0 \leq i < 100\eps^{-1}k  \ln \delta_{L.\ref{lem:nibble}}$.
		In the following, we use repeatedly the fact that $1-x\geq\exp(-x/(1-x))$ for $0 \leq x <1.$ 
		It follows that 
		$$\L_i \overset{\text{\ref{itm:proof-main-thm-ratio}}}{\geq} \N_{i} \overset{\text{\ref{itm:main-thm-n-bound}}}{\geq} \left(1-\tfrac{5k}{\ln \delta_{L.\ref{lem:nibble}}}\right)^{100 \eps^{-1}k \ln \delta_{L.\ref{lem:nibble}}} \delta \geq  \e^{-1000k^2\eps^{-1}} \delta \ge \delta_{L.\ref{lem:nibble}},$$
		where we used that $\frac{5k}{\ln \delta_{L.\ref{lem:nibble}}} \le \frac{1}{2}$ and that $\delta \geq \e^{1000k^2\eps^{-1}} \delta_{L.\ref{lem:nibble}}$. Similarly
		\begin{align*}
			\mu_i(e,c) & \overset{\text{\ref{itm:main-thm-mu-bound}}}{\geq} \left(1-\frac{2}{\delta_{L.\ref{lem:nibble}}}\right)^{100\eps^{-1}k\ln \delta_{L.\ref{lem:nibble}}} \exp\left(-\delta^{1/24}\right) 
			\\
			& \geq \exp\left(-\frac{400\eps^{-1}k\ln \delta_{L.\ref{lem:nibble}}}{\delta_{L.\ref{lem:nibble}}} \right) \exp\left(-\delta^{1/24}\right)
			\\
			& \geq \exp\left(-1 \right) \exp\left(-\delta^{1/24}\right)
			\\
			&\geq \exp\left(-\delta^{1/20}\right),
		\end{align*}
		where for the second inequality we used that $\frac{2}{\delta_{L.\ref{lem:nibble}}} \le \frac{1}{2}$, for the third inequality we used that $400\eps^{-1}k\ln \delta_{L.\ref{lem:nibble}} \leq \delta_{L.\ref{lem:nibble}}$, and for the fourth inequality we used that $1+\delta^{1/24} \le \delta^{1/20}$ since $\delta$ is large enough.

		Note that, if  $\L_i/\N_i \geq 3\e k$, there is nothing to do.
		So assume that $\L_i/\N_i \leq  3\e k$. 
		Now we apply Lemma~\ref{lem:nibble} with
		\begin{center}
			\begin{tabular}{ c  *{5}{|c} }
				object/parameter & $G_{i}$ & $L_{i}(e)$  & $\mu_{{i}}$  & $\L_{i}$ &  $\N_{i}$ \\
				\hline
				playing the role of	 & $G$ & $L(e)$  & $\mu$   & $\L$  &  $\N$ 
			\end{tabular}
		\end{center}
		to obtain a partial $(L'_i,\sigma)$-colouring $\gamma_{i+1}'$ of $G_{i}$, parameters $\L_{i+1}, \N_{i+1}$ and weighted lists of colours $(L_{i+1},\mu_{i+1})$ that satisfy (a$''$)--(e$''$).
		Note in particular, that we obtain (a$''$) since $n_i \geq \delta_{L.\ref{lem:nibble}}$, (b$''$) since $n_i \leq \delta$, and (e$''$) since $\L_i \geq \delta_{L.\ref{lem:nibble}}$.
		We then let $\gamma_{i+1}$ be the union of the colourings $\gamma_i$ and $\gamma_{i+1}'$.
		
		Finally, observe that by (b$''$), we have $\L_{i^*}/\N_{i^*} \geq 3\e k$ for some $i^* \leq 100\eps^{-1}k  \ln\delta$.
		Thus we may finish the proof of the theorem by applying Lemma~\ref{lem:edge-finisher}.
	\end{proof}
	
	\section{Tools}\label{sec:tools}
	In this section, we collect some of the tools that will be used in the proofs.
	We start with the following simple lemma.
	\begin{lemma}\label{lem:weighted-binomial-coeff-bound}
		Let $p_1,\ldots,p_N >0$ such that $\sum_{i =1}^N p_i \leq p$.
		Then we have
		\begin{align*}
			\sum_{S \in \binom{N}{k}} \prod_{i \in S} p_i \leq \left(\frac{\e p}{k}\right)^k.
		\end{align*}
	\end{lemma}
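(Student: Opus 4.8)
The left-hand side is exactly the $k$-th elementary symmetric polynomial $e_k(p_1,\dots,p_N) = \sum_{S \in \binom{N}{k}} \prod_{i\in S} p_i$, so the plan is to bound this symmetric function by a power of the first symmetric sum. First I would expand $\left(\sum_{i=1}^N p_i\right)^k = \sum_{(i_1,\dots,i_k) \in [N]^k} p_{i_1}\cdots p_{i_k}$, a sum of nonnegative terms since each $p_i > 0$. Restricting this sum to tuples $(i_1,\dots,i_k)$ with pairwise distinct entries can only decrease it, and each unordered $k$-subset $S$ is counted exactly $k!$ times among such tuples; hence
\[
 k! \cdot \sum_{S \in \binom{N}{k}} \prod_{i \in S} p_i \;\le\; \left(\sum_{i=1}^N p_i\right)^{k} \;\le\; p^{k}.
\]

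The second ingredient is the elementary inequality $k! \ge (k/\e)^k$, which follows from $\e^{k} = \sum_{j \ge 0} k^j/j! \ge k^k/k!$. Combining the two displays gives
\[
 \sum_{S \in \binom{N}{k}} \prod_{i \in S} p_i \;\le\; \frac{p^{k}}{k!} \;\le\; \frac{p^{k}}{(k/\e)^{k}} \;=\; \left(\frac{\e p}{k}\right)^{k},
\]
as desired. (One should note the edge case $k > N$, where the left-hand side is an empty sum equal to $0$ and the bound is trivial; for $k \le N$ the argument above applies verbatim.)

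There is essentially no obstacle here: the only point requiring a word of care is the justification that dropping the non-distinct tuples from the multinomial expansion is legitimate, which is precisely where the hypothesis $p_i > 0$ (nonnegativity of every term) is used. Everything else is the standard $e_k \le (e_1)^k/k!$ bound together with Stirling-type slack on $k!$.
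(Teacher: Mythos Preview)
Your proof is correct and follows essentially the same approach as the paper: bound $k!\,e_k(p_1,\dots,p_N)$ by $(p_1+\cdots+p_N)^k\le p^k$ via the multinomial expansion, then use $\e^k\ge k^k/k!$ to finish. Your write-up is in fact slightly more careful (handling the $k>N$ edge case and spelling out why positivity is needed when dropping the non-distinct tuples), but the argument is the same.
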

	\begin{proof}
		Note that for every $S \in \binom{N}{k}$, the product $\prod_{i \in S} p_i$ appears $k!$ times in the sum obtained from expanding $(p_1+ \cdots+p_N)^k$.
		Moreover, by the Taylor expansion of the exponential function, we have $\e^k \geq \frac{k^k}{k!}$.
		It follows that
		\begin{align*} 
			\sum_{S \in \binom{N}{k}} \prod_{i \in S} p_i 
			\leq \frac{(p_1+\cdots+p_N)^k }{k!} 
			= \frac{p^k}{k!} \leq \left(\frac{\e p}{k}\right)^k.
		\end{align*}
	\end{proof}

	We will use the following corollary of Talagrand's Inequality.
	A very similar version of this theorem has been derived by Molloy and Reed~\cite[Talagrand's Inequality V]{MR13}. We note that the proof is the straightforward generalization of the proof of Theorem 6.3 in~\cite{KP20a} but we include a proof in Appendix~\ref{sec:appendix} for completeness. But first a definition.
	
	\begin{definition}
		Let $((\Omega_i, \Sigma_i, \mathbb P_i))_{i=1}^n$ be probability spaces, let $(\Omega, \Sigma, \mathbb P)$ be their product space, let $\Omega^* \subseteq \Omega$ be a set of \textit{exceptional outcomes}, and let $X : \Omega \rightarrow \mathbb R_{\geq0}$ be a non-negative random variable.  Let $b > 0$.
		\begin{itemize}[(1)]
			\item If $\omega = (\omega_1, \dots, \omega_n) \in \Omega$ and $s \ge 0$, a \textit{$b$-certificate} for $X, \omega, s$, and $\Omega^*$ is an index set $I\subseteq\{1, \dots, n\}$ and a vector $(c_i: i\in I)$ with 
			$$\sum_{i\in I} c_i^2 \le bs$$ 
			such that for all $I'\subseteq I$, we have that
			\begin{equation*}
				X(\omega') \geq s - \sum_{i\in I'} c_i,
			\end{equation*}
			for all $\omega' = (\omega'_1, \dots, \omega'_n)\in\Omega\setminus\Omega^*$  such that $\omega_i=\omega_i'$ for all $i\in I\setminus I'$.
			\item If for every $\omega\in\Omega\setminus\Omega^*$, there exists a $b$-certificate for $X, \omega, s:=X(\omega)$, and $\Omega^*$, then $X$ is \textit{$b$-certifiable} with respect to $\Omega^*$.
		\end{itemize}
	\end{definition}
	
	\begin{theorem}\label{thm:talagrand-V}
		Let $((\Omega_i, \Sigma_i, \mathbb P_i))_{i=1}^n$ be probability spaces, let $(\Omega, \Sigma, \mathbb P)$ be their product space, let $\Omega^* \subseteq \Omega$ be a set of exceptional outcomes, and let $X : \Omega \rightarrow \mathbb R_{\geq0}$ be a non-negative random variable.  Let $b > 0$.
		
		If $X$ is $b$-certifiable with respect to $\Omega^*$, 
		then for any $t > 96\sqrt{b\Expect{X}} +  128b + 8\Prob{\Omega^*}(\sup X),$
		\begin{equation*}
			\Prob{|X - \Expect{X}| > t} \leq 4\exp\left({\frac{-t^2}{8b(4\Expect{X} + t)}}\right) + 4\Prob{\Omega^*}.
		\end{equation*}
	\end{theorem}
	
	We will also need the (general) Lov\'asz Local Lemma.
	Let $B_1,B_2,\dots,B_n$ be events in an arbitrary probability space.
	We think of the $B_i$'s as `bad' events and want to show that none of them occur with some positive probability.
	The Lov\'asz Local Lemma tells us that this is feasible provided that the interdependence of these events is sufficiently bounded.
	This is formalized as follows:
	a directed graph $D=(V,E)$ on the set of vertices $V=\{1,\ldots,n\}$ is called a \emph{dependency digraph} for the events $B_1,\ldots,B_n$ if for each $i$, $1 \leq i \leq n$, the event $B_i$ is mutually independent of all the events $\{B_j:\,(i,j) \notin E\}$.
	\begin{theorem}[Lov\'asz Local Lemma~\cite{EL75}]\label{thm:local-lemma}
		Let $B_1,B_2,\dots,B_n$ be events in an arbitrary probability space.
		Suppose that $D=(V,E)$ is a dependency digraph for these events and suppose that there are real numbers $x_1,\dots,x_n$ such that $0\leq x_i < 1$ and $\Pr(B_i)\leq x_i\prod_{(i,j)\in E}(1-x_j)$.
		Then
		$$\Pr\left( \bigwedge_{i=1}^n \overline{B_i} \right) \geq \prod_{i=1}^n (1-x_i) >0.$$
	\end{theorem}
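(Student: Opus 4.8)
The plan is to follow the classical inductive proof of the general Lov\'asz Local Lemma. The naive target $\Pr(B_i)\le x_i$ is too weak to run an induction, so first I would establish the strengthened conditional claim
$$\Pr\Bigl(B_i \Bigm| \bigcap_{j\in S}\overline{B_j}\Bigr)\le x_i \qquad \text{for every } i \text{ and every } S\subseteq\{1,\dots,n\}\setminus\{i\},$$
by induction on $|S|$, checking along the way that every conditioning event appearing in the argument has positive probability (this follows from the same induction, since a product of factors of the form $1-x_j>0$ lower-bounds such probabilities). The base case $S=\emptyset$ is immediate from the hypothesis $\Pr(B_i)\le x_i\prod_{(i,j)\in E}(1-x_j)\le x_i$.

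For the inductive step I would split the conditioning set along the dependency digraph: set $S_1=\{\,j\in S:(i,j)\in E\,\}$, the out-neighbours of $i$ present in $S$, and $S_2=S\setminus S_1$. If $S_1=\emptyset$ then $B_i$ is mutually independent of $\{B_j:j\in S\}$ and the bound is immediate. Otherwise, writing $E_2=\bigcap_{j\in S_2}\overline{B_j}$, I would use the identity
$$\Pr\Bigl(B_i \Bigm| \bigcap_{j\in S}\overline{B_j}\Bigr) = \frac{\Pr\bigl(B_i\cap\bigcap_{j\in S_1}\overline{B_j} \bigm| E_2\bigr)}{\Pr\bigl(\bigcap_{j\in S_1}\overline{B_j} \bigm| E_2\bigr)}.$$
The numerator is at most $\Pr(B_i\mid E_2)=\Pr(B_i)\le x_i\prod_{(i,j)\in E}(1-x_j)$, since $B_i$ is mutually independent of $\{B_j:j\in S_2\}$. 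For the denominator, enumerate $S_1=\{j_1,\dots,j_r\}$, apply the chain rule, and bound each factor $\Pr\bigl(\overline{B_{j_\ell}}\bigm|\overline{B_{j_1}}\cap\cdots\cap\overline{B_{j_{\ell-1}}}\cap E_2\bigr)\ge 1-x_{j_\ell}$ using the induction hypothesis (each conditioning index set has size at most $|S|-1$ and omits $j_\ell$); since $S_1\subseteq\{j:(i,j)\in E\}$ and $0\le x_j<1$, the product $\prod_{\ell=1}^r(1-x_{j_\ell})$ is at least $\prod_{(i,j)\in E}(1-x_j)$. Dividing the two bounds gives $\Pr(B_i\mid\bigcap_{j\in S}\overline{B_j})\le x_i$, which closes the induction.

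Finally I would conclude by the chain rule,
$$\Pr\Bigl(\bigwedge_{i=1}^n\overline{B_i}\Bigr)=\prod_{i=1}^n\Pr\Bigl(\overline{B_i}\Bigm|\bigcap_{j<i}\overline{B_j}\Bigr)=\prod_{i=1}^n\Bigl(1-\Pr\Bigl(B_i\Bigm|\bigcap_{j<i}\overline{B_j}\Bigr)\Bigr)\ge\prod_{i=1}^n(1-x_i)>0,$$
applying the strengthened claim with $S=\{1,\dots,i-1\}$ to each factor. The one genuinely non-routine ingredient is the choice of induction hypothesis together with the decomposition $S=S_1\cup S_2$: isolating the part of the conditioning that interacts with $B_i$ (handled recursively) from the part that does not (discarded via mutual independence) is precisely what makes the recursion close, and everything else is bookkeeping with conditional probabilities.
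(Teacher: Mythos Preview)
Your argument is the standard inductive proof of the general Lov\'asz Local Lemma and is correct. Note, however, that the paper does not supply its own proof of this statement: Theorem~\ref{thm:local-lemma} is quoted as a tool with a citation to~\cite{EL75}, so there is nothing in the paper to compare your proof against.
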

	The following result can be derived from Theorem~\ref{thm:local-lemma} by taking $x_i = 1/(d+1)$ for all $i$ and using that $1-x \geq \exp(-{x}/{(1-x)})$ for $x<1$.
	\begin{corollary}[Local Lemma; Symmetric Case]\label{cor:simple-local-lemma}
		Let $B_1,B_2,\dots,B_n$ be events in an arbitrary probability space.
		Suppose that each event $B_i$ is mutually independent of all but at most $d$ other events, and that $\Pr(B_i) \leq p$ for all $1 \leq i \leq n$.
		If $\e p (d+1) \leq 1$, then $\Pr\left( \bigwedge_{i=1}^n \overline{B_i} \right) >0$.
	\end{corollary}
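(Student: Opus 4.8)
The plan is to derive this directly from the general Lov\'asz Local Lemma (Theorem~\ref{thm:local-lemma}) by choosing a uniform value for the parameters $x_i$, exactly as the remark preceding the statement suggests. First I would fix a dependency digraph $D=(V,E)$ for the events $B_1,\dots,B_n$: since each $B_i$ is mutually independent of all but at most $d$ of the remaining events, we may choose $D$ so that every vertex has out-degree at most $d$. If $d=0$ the events are mutually independent and the conclusion is immediate, since then $\Pr(\bigwedge_i \overline{B_i}) = \prod_i (1-\Pr(B_i)) \geq (1-p)^n > 0$ (the hypothesis $\e p\le 1$ forces $p<1$). So from now on assume $d\geq 1$.

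Next I would set $x_i = 1/(d+1)$ for every $i$, which satisfies $0\leq x_i < 1$. The only real computation is to verify the hypothesis $\Pr(B_i)\leq x_i\prod_{(i,j)\in E}(1-x_j)$ of Theorem~\ref{thm:local-lemma}. Since the out-degree of $i$ in $D$ is at most $d$ and each factor $1-x_j = 1-\tfrac{1}{d+1}$ lies in $(0,1)$,
\[
x_i\prod_{(i,j)\in E}(1-x_j)\ \geq\ \frac{1}{d+1}\left(1-\frac{1}{d+1}\right)^{d}.
\]
Applying the inequality $1-x\geq \exp(-x/(1-x))$ with $x=\tfrac{1}{d+1}$, so that $x/(1-x)=1/d$, gives $\left(1-\tfrac{1}{d+1}\right)^{d}\geq \exp(-d\cdot \tfrac1d)=\e^{-1}$. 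Hence $x_i\prod_{(i,j)\in E}(1-x_j)\geq \tfrac{1}{\e(d+1)}\geq p\geq \Pr(B_i)$, where the middle inequality is precisely the assumption $\e p(d+1)\leq 1$.

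Finally, Theorem~\ref{thm:local-lemma} yields $\Pr\!\left(\bigwedge_{i=1}^n \overline{B_i}\right)\geq \prod_{i=1}^n(1-x_i) = \left(1-\tfrac{1}{d+1}\right)^n > 0$, which is the claim. I do not expect any genuine obstacle here: the argument is a one-line substitution into the general local lemma together with the elementary estimate above. The only point deserving a moment's care is the degenerate case $d=0$, where $x_i=1/(d+1)=1$ would violate the constraint $x_i<1$; this is why it is cleanest to treat $d=0$ separately at the outset rather than trying to absorb it into the main case.
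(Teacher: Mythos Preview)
Your proof is correct and follows exactly the approach indicated in the paper: set $x_i = 1/(d+1)$ for all $i$ and use $1-x \geq \exp(-x/(1-x))$ to verify the hypothesis of Theorem~\ref{thm:local-lemma}. Your separate treatment of the degenerate case $d=0$ is a careful touch that the paper's remark omits, but the core argument is identical.
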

	
	\section{The naive colouring procedure}\label{sec:nibble}
	This section is dedicated to the proof of Lemma~\ref{lem:nibble}.
	Before we dive into the details, let us lay out the general strategy.
	As mentioned above, we will use the naive colouring procedure.
	Our actual procedure consists of three steps, \ref{prod:assign} assigning colours, \ref{prod:remove-conflict} resolving conflicts, and \ref{prod:coinflip} performing an {equalizing coin flip}.
	Let $L'(e)$ be the list of remaining colours after the procedure.
	We note that during steps \ref{prod:remove-conflict} and \ref{prod:coinflip}, we will delete more colours from $L'(e)$ than necessary, making the procedure `{wasteful}'.
	Moreover, our procedure permits for multiple colour assignments to an edge.
	This allows us to simplify the analysis of the procedure at cost of marginally worse parameters.
	In a similar vein, the purpose of the coin flip, is to guarantee that the probability that a colour $c \in L(e)$ will be in $L'(e)$ is \emph{uniformly} $\K^k$ for all edges $e$ and colours $c$, where $\K  \approx 1-\frac{1}{(1+\eps)\ln \N}$.
	(Without a coin flip these probabilities would differ between the edges, making the analysis more cumbersome.)
	A short argument shows that the expected value of the random variable $\om{L'(e)}$ is $\L \cdot \K^k$.
	By showing that  $\om{L'(e)}$ is concentrated around its expected value, we deduce that with high probability
	$$ \om{L'(e)}  \approx \L \cdot \K^k.$$
	A similar argument yields that, for a fixed edge $e$, vertex $v \in e$ and colour $c \in L(e)$, with high probability we can bound the weight of the colour neighbours after the procedure by
	$$\om{N'(e,v,c)} \lessapprox  \N \cdot  \K^{k} \cdot \left(1-\frac{\K^{k}}{\ln \N} \right) .$$ 
	Using the Local Lemma, we show that with positive probability these bounds hold simultaneously for all edges, vertices and colours.
	Finally, computations under these assumptions give that $$\frac{\om{L'(e)}}{\om{N'(e,v,c)}} \geq  \left(1+\tfrac{\eps}{16\ln \N}\right) \frac{\L}{\N},$$ as desired.
	Now come the details.
	
	\begin{proof}[Proof of Lemma~\ref{lem:nibble}]
		Given $\eps>0$ and $k \in \NATS$, we choose $\delta = \delta(\eps,k) \in \NATS$ sufficiently large.
		Let $G =(V,E)$, $L(e)$, $\sigma$, $\mu$, $\L$, $\N$ be as in the statement of Lemma~\ref{lem:nibble}.
		For every edge $e \in E$, vertex $v \in e$ and colour $c \in L(e)$, denote $N(e,v,c) = N_{G,L,\sigma}(e,v,c)$.
		
		In order to define $\L'$ and $\N'$, we need to set up some intermediate parameters.
		Let 
		{$$\K = 1-{\frac{\N}{\L}} \frac{1}{\ln \N} .$$}
		For an edge $e \in E$, vertex $v \in e$ and colour $c \in \NATS$, we define
		\begin{align*}
			\Eq(e,v,c) &= \frac{\K }{\prod_{(f,c') \in N(e,v,c)} \left(1-\frac{\mu(f,c')}{\L}\frac{1}{\ln \N}\right)}.
		\end{align*}
		{Recall that $\prod_i (1 - a_i) \geq 1 - \sum_i a_i$ for $a_i \in [0, 1]$.
			Together with assumption~\ref{itm:bound-colour-neighbours} of Lemma~\ref{lem:nibble}, this gives
			\begin{align*}
				\prod_{(f,c') \in N(e,v,c)} \left(1-\frac{\mu(f,c')}{\L}\frac{1}{\ln \N}\right) 
				\geq 1- \sum_{(f,c') \in N(e,v,c) }  \frac{\mu(f,c')}{\L}\frac{1}{\ln \N}
				\geq 1-\frac{\N}{\L}\frac{1}{\ln \N}.
		\end{align*}}
		So in particular, $0 \le \Eq(e,v,c) \leq 1$.
		Now, let
		\begin{align*}
			\L' &= \L \cdot \K^k - \N^{2/3};
			\\ \N' &= \N \cdot  \K^{k-1} \cdot \left(1-\frac{{1-\eps/2}}{\ln \N} \K^{k}  \right) + \N^{2/3}.
		\end{align*}
		
		In the following, we show that Lemma~\ref{lem:nibble} holds with $\L'$ and $\N'$.
		The next claim covers properties~\ref{itm:nibble-output-L'-N'-lower-bound}--\ref{itm:nibble-output-ratio-L/N} of Lemma~\ref{lem:nibble}.
		Its proof can be found in Section~\ref{sec:ratio}.
		\begin{claim}\label{cla:ratio}
			We have $\L' \geq  (1-\frac{2k}{\ln \N})\L$, $\N' \geq  (1-\frac{3k}{\ln \N})\N$, and $\L'/\N' \geq (1+ \frac{\eps}{16 \ln \N})(\L/\N)$.
		\end{claim}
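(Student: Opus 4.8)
The plan is to verify the three inequalities of Claim~\ref{cla:ratio} by direct estimation, using the definitions of $\K$, $\L'$, and $\N'$ together with the standing hypotheses $\N, \L > \delta$ (with $\delta$ large) and $1+\eps < \L/\N < 3\e k$. First I would record the key size estimate: since $\N/\L \in (1/(3\e k), 1)$ and $\N$ is large, the term $\frac{\N}{\L}\frac{1+\eps/8}{\ln \N}$ is small, so $\K = 1 - \frac{\N}{\L}\frac{1+\eps/8}{\ln\N} \in (1 - \frac{1}{\ln\N}, 1)$, and hence $\K^k \geq 1 - \frac{k}{\ln\N}\cdot\frac{\N}{\L}(1+\eps/8) \geq 1 - \frac{2k}{\ln\N}\cdot\frac{\N}{\L}$ for $\delta$ large (using $1-kx \leq (1-x)^k$ and $\eps \leq 1/4$). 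I would also need the cruder bound $\K^k \geq 1 - \frac{2k}{\ln\N}$, which follows the same way.

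For the lower bound on $\L'$: from $\L' = \L\K^k - \N^{2/3}$ and $\K^k \geq 1 - \frac{2k}{\ln\N}\cdot\frac{\N}{\L}$ we get $\L\K^k \geq \L - \frac{2k\N}{\ln\N} \geq \L - \frac{2k\L}{\ln\N}$ (since $\N \leq \L$); then $\N^{2/3} = o(\N/\ln\N) = o(\L/\ln\N)$ because $\N$ is large, so absorbing it still leaves $\L' \geq (1 - \frac{2k}{\ln\N})\L$. The bound $\N' \geq (1-\frac{3k}{\ln\N})\N$ is similar but looser: $\N' = \N\K^{k-1}(1 - \frac{1-\eps/8}{\ln\N}\K^k) + \N^{2/3} \geq \N\K^{k-1}(1 - \frac{1}{\ln\N}) \geq \N(1 - \frac{2(k-1)}{\ln\N})(1 - \frac{1}{\ln\N}) \geq \N(1 - \frac{3k}{\ln\N})$, again with room to spare from the $+\N^{2/3}$ term which is now helpful rather than harmful. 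Note that \ref{itm:nibble-output-L'-N'-lower-bound} asks only for $\N' \geq (1-\frac{5k}{\ln\N})\N$, which is weaker, so this suffices once I substitute $\N = \N_i$ in the application within the proof of Theorem~\ref{thm:main}.

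The ratio bound $\L'/\N' \geq (1 + \frac{\eps}{16\ln\N})(\L/\N)$ is the main obstacle and requires tracking the $\eps$-dependent gains carefully. The idea is that
$$
\frac{\L'}{\N'} = \frac{\L\K^k - \N^{2/3}}{\N\K^{k-1}(1 - \frac{1-\eps/8}{\ln\N}\K^k) + \N^{2/3}} \geq \frac{\L}{\N}\cdot\frac{\K\,(1 - \N^{2/3}/(\L\K^k))}{(1 - \frac{1-\eps/8}{\ln\N}\K^k)(1 + \N^{2/3}/(\N\K^{k-1}(1-\cdots)))}.
$$
The dominant factor is $\K / (1 - \frac{1-\eps/8}{\ln\N}\K^k)$. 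Writing $\K = 1 - \frac{\N}{\L}\frac{1+\eps/8}{\ln\N}$ and $\K^k \approx 1 - O(\frac{1}{\ln\N})$, the denominator is $1 - \frac{1-\eps/8}{\ln\N} + O(\frac{1}{\ln^2\N})$, so the factor is approximately $(1 - \frac{c_1}{\ln\N})(1 + \frac{1-\eps/8}{\ln\N} + O(\frac{1}{\ln^2\N}))$ where $c_1 = \frac{\N}{\L}(1+\eps/8) \leq (1+\eps/8)/(1+\eps) \leq 1 - \eps/2$ for $\eps$ small (using $\L/\N > 1+\eps$). Thus the net first-order coefficient of $\frac{1}{\ln\N}$ is at least $(1-\eps/8) - (1 - \eps/2) = \frac{3\eps}{8} > \frac{\eps}{8}$, and the $\N^{2/3}$ corrections are $O(\N^{-1/3}) = o(\frac{1}{\ln\N})$, so after subtracting an $O(\frac{1}{\ln^2\N})$ error and halving to be safe we still clear $\frac{\eps}{16\ln\N}$ for $\delta$ (hence $\N$) large enough. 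I would carry this out by fixing explicit constants, bounding $\frac{\N}{\L} \leq \frac{1}{1+\eps}$ from below-ratio hypothesis \ref{itm:bound-ratio} and $\frac{\N}{\L} \geq \frac{1}{3\e k}$ from above, and checking the inequality $(1 - \frac{1-\eps/2}{\ln\N} - \frac{C}{\ln^2\N})(1 + \frac{1-\eps/8}{\ln\N}) \geq 1 + \frac{\eps}{16\ln\N}$ holds for all sufficiently large $\N$; the delicate point is that every constant swallowed into "$O$" must be independent of the instance, depending only on $\eps$ and $k$, which it is since all the quantities in play are controlled by \ref{itm:bound-ratio}.
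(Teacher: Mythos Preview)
Your proposal is correct and follows essentially the same approach as the paper's proof: both arguments lower-bound $\K$ via $\N/\L \leq 1/(1+\eps)$, derive the bounds on $\L'$ and $\N'$ directly from the definitions while absorbing the $\N^{2/3}$ error terms, and obtain the ratio bound by isolating the dominant factor $\K\big/\bigl(1-\tfrac{1-\eps/8}{\ln \N}\K^k\bigr)$ and extracting the first-order $\eps$-gain in $1/\ln\N$. The only cosmetic difference is that the paper packages the $\N^{2/3}$ corrections as multiplicative $(1\pm \N^{-1/5})$ factors, whereas you track them additively as $O(\N^{-1/3})$; either bookkeeping works.
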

		
		We use the following random procedure to colour some edges of $G$.
		\begin{definition}[Random colouring procedure]
			Initialize $L'(e)$  as a copy of $L(e)$ for each $e \in E$.
			\begin{enumerate}[(I)]
				\item \label{prod:assign}  For every colour $c \in L(e)$ and edge $e \in E$, assign $c$ to $e$ with probability $\frac{\mu(e,c)}{\L} \frac{1}{\ln \N}$ independently of all other assignments.
				\item \label{prod:remove-conflict} For each edge $e$, vertex $v \in e$, colour $c \in L(e)$ and every pair $(f,c') \in N(e,v,c)$, do the following. 
				If $c'$ is assigned to $f$, then
				\begin{enumerate}
					\item remove $c$ from $L'(e)$, and
					\item if $c$ was assigned to $e$, remove $c$ from $e$.
				\end{enumerate}
				\item  \label{prod:coinflip}  For every edge $e \in E$, vertex $v \in e$ and colour $c \in L(e)$, perform an independent coin flip $F(e,v,c)$ that returns $1$ with probability $\Eq(e,v,c)$.
				If the $F(e,v,c)$ returns $0$, then
				\begin{enumerate}
					\item remove $c$ from $L'(e)$, and
					\item if $c$ was assigned to $e$, remove $c$ from $e$.
				\end{enumerate}
			\end{enumerate}
		\end{definition}

		Let $G'=(V',E') \subset G$ be the subgraph of uncoloured edges after step~\ref{prod:coinflip} of the procedure.
		We say that a colour $c \in L(e)$ was \emph{removed} from $L'(e)$ if $c \notin L'(e)$ after step~\ref{prod:coinflip} of the procedure.
		For every edge $e \in E'$, vertex $v \in e$ and colour $c \in L'(e)$, let $N'(e,v,c) = N_{G',L',\sigma}(e,v,c)$.
		The next claim covers~\ref{itm:nibble-output-colour-lists}--\ref{itm:nibble-output-enemy-lists} of Lemma~\ref{lem:nibble}.
		Its proof can be found in Section~\ref{sec:bounding-colour-and-enemy-lists}.
		\begin{claim}\label{cla:main}
			With positive probability we have
			\begin{enumerate}[\upshape (a)]
				\item $\om{L'(e)} \geq \L'  $ for every edge $e  \in E'$, and  
				\item $\om{N'(e,v,c)}  \leq \N'  $  for every edge $e \in E'$, vertex $v \in e$ and colour $c \in \NATS$.
			\end{enumerate}
		\end{claim}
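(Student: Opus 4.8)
The plan is to analyse the random colouring procedure along the standard ``expectation--concentration--Local Lemma'' route: compute the first moments of $\om{L'(e)}$ and $\om{N'(e,v,c)}$, show these are sharply concentrated using Talagrand's inequality (Theorem~\ref{thm:talagrand-V}), and conclude with the symmetric Local Lemma (Corollary~\ref{cor:simple-local-lemma}) that the two bounds of Claim~\ref{cla:main} hold simultaneously for all $e\in E'$, $v\in e$ and $c$. The equalizing coin flip of step~\ref{prod:coinflip} is the device that makes the expectation computation uniform: it is calibrated so that each colour $c\in L(e)$ survives the procedure with probability exactly $\K^{k}$, \emph{independently of $e$}, which pins the first moment of $\om{L'(e)}$ to $\L\K^{k}$.

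First I would establish the survival probability. Fix $e\in E$, $v\in e$ and $c\in L(e)$. In step~\ref{prod:remove-conflict} the pair $(e,c)$ is not deleted ``at $v$'' precisely when no $(f,c')\in N(e,v,c)$ has $c'$ assigned to $f$ in step~\ref{prod:assign}, which has probability $\prod_{(f,c')\in N(e,v,c)}\bigl(1-\tfrac{\mu(f,c')}{\L}\tfrac1{\ln\N}\bigr)$; multiplying by the probability $\Eq(e,v,c)$ that $F(e,v,c)$ returns $1$, the definition of $\Eq$ gives survival probability exactly $\K$ at $v$. Since $G$ is linear, any edge meets $e$ in at most one vertex, so these $k$ events (over $v\in e$) are independent, and independent of the coin flips, whence $\Pr[c\in L'(e)]=\K^{k}$ and $\Exp(\om{L'(e)})=\L\K^{k}=\L'+\N^{2/3}$. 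For the colour neighbours, a pair $(f,c')\in N(e,v,c)$ survives into $N'(e,v,c)$ exactly when $f$ stays uncoloured and $c'$ remains in $L'(f)$; a short computation — the retention of $c'$ costs a factor at most $\K^{k-1}$, and $f$ becomes coloured with probability about $\tfrac{1}{\ln\N}\K^{k}$, using assumption~\ref{itm:bound-list-size} of Lemma~\ref{lem:nibble} — bounds the probability of this by $\K^{k-1}\bigl(1-\tfrac{1-\eps/8}{\ln\N}\K^{k}\bigr)$, and since $\om{N(e,v,c)}\le\N$ it follows that $\Exp(\om{N'(e,v,c)})\le\N\K^{k-1}\bigl(1-\tfrac{1-\eps/8}{\ln\N}\K^{k}\bigr)=\N'-\N^{2/3}$.

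The main work is concentration. I would apply Theorem~\ref{thm:talagrand-V} not to $\om{L'(e)}$ directly but to $Z_e:=\om{L(e)}-\om{L'(e)}$, the weighted number of colours deleted from $L(e)$: the deletion of any single colour is ``certified'' by one trial (the colour assignment, or the failed coin flip, responsible for it), which is what makes both hypotheses of Theorem~\ref{thm:talagrand-V} manageable, and concentration of $Z_e$ is equivalent to that of $\om{L'(e)}$ because $\om{L(e)}=\L$ is fixed; the colour neighbourhoods are treated in the same way, via the weighted number of pairs removed from $N(e,v,c)$. Linearity of $G$ is again used — a single assignment blocks at most one colour of $e$ — and into the exceptional set $\Om^{*}$ one puts the (very unlikely) outcomes in which some edge that can conflict with $e$ receives more than $\ln^{2}\N$ colour assignments, or some colour of $e$ is deleted by more than $\ln^{2}\N$ distinct causes; since each edge is assigned only $\tfrac{1}{\ln\N}$ colours in expectation, a Chernoff bound makes $\Pr(\Om^{*})$ super-polynomially small in $\N$. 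One can then choose the Talagrand weights so that $\sum_i b_i^{2}\le r\,Z_e(x)$ holds with $r=O(k\ln^{2}\N)$; with $\Exp(Z_e)=O(k\N/\ln\N)$ and $t=\N^{2/3}$, Theorem~\ref{thm:talagrand-V} gives
\[
  \Pr\bigl[\,|Z_e-\Exp(Z_e)|>\N^{2/3}\,\bigr]\le\exp\bigl(-\N^{1/4}\bigr),
\]
and the analogous bound for the colour neighbourhoods; in particular $\om{L'(e)}\ge\L'$ fails with probability at most $\exp(-\N^{1/4})$, and so does $\om{N'(e,v,c)}\le\N'$.

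Finally I would finish with the Local Lemma. Let $B_e$ be the event $\om{L'(e)}<\L'$, and for $v\in e$ and $c\in\NATS$ with $N(e,v,c)\neq\es$ let $B_{e,v,c}$ be the event $\om{N'(e,v,c)}>\N'$; each has probability at most $\exp(-\N^{1/4})$ by the previous paragraph. Each such event is determined by the trials at $e$ together with those at the edges that can form a conflict with $e$, of which — by assumptions~\ref{itm:bound-weights} and~\ref{itm:bound-colour-neighbours} of Lemma~\ref{lem:nibble} — there are at most $\exp(O(\N^{1/20}))$, each carrying at most $\L\exp(\N^{1/20})$ colours; hence every bad event is mutually independent of all but $d\le\exp(O(\N^{1/20}))$ others. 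Since $\e\cdot\exp(-\N^{1/4})\cdot(d+1)\le 1$ for $\delta$, and hence $\N$, large, Corollary~\ref{cor:simple-local-lemma} yields an outcome of the procedure avoiding every $B_e$ and $B_{e,v,c}$, which is exactly Claim~\ref{cla:main}. I expect the concentration step to be the main obstacle: one has to choose the trials, the weights $b_i$ and the exceptional set $\Om^{*}$ so that both hypotheses of Theorem~\ref{thm:talagrand-V} genuinely hold over all outcomes of a three-stage procedure that may assign several colours to one edge, and the resulting tail bound must be strong enough (here $\exp(-\N^{1/4})$) to beat the dependency degree $\exp(O(\N^{1/20}))$ that the very small weight lower bound $\exp(-\N^{1/20})$ forces on the Local Lemma; the bookkeeping in $\Exp(\om{N'(e,v,c)})$ — tracking which events cost factors of $\K$ or of $\tfrac1{\ln\N}$ — is the other delicate point.
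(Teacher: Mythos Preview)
Your overall architecture --- expectation, Talagrand concentration, symmetric Local Lemma --- matches the paper, and your treatment of $\om{L'(e)}$ would work (the paper actually does something simpler: since correspondences are permutations, the events $\{c\in L'(e)\}_{c\in L(e)}$ are mutually independent, so Talagrand applies to $\om{L'(e)}$ itself with $r=1$, no complement and no exceptional set needed).

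The genuine gap is in your handling of $\om{N'(e,v,c)}$. You propose to treat it ``in the same way, via the weighted number of pairs removed from $N(e,v,c)$'', and you invoke linearity to say a single assignment blocks at most one colour. That is true for $L'(e)$ but \emph{false} for $N'(e,v,c)$: if an edge $g\ni v$ is assigned a colour $\beta$, then for every $f\ni v$ the colour $\sigma_{g,f}(\beta)$ is deleted from $L'(f)$, and nothing prevents $\sigma_{g,f}(\beta)=\sigma_{e,f}(c)$ for \emph{many} such $f$ simultaneously --- this is determined by the (adversarial) correspondences, not by the random trials, so it cannot be pushed into $\Om^{*}$. One trial can therefore change $\om{N'(e,v,c)}$ by an amount comparable to $\N$, and the Talagrand hypotheses fail. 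The paper states this explicitly: ``$\om{N'(e,v,c)}$ is not concentrated''.

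The paper's fix is to bound $\om{N'(e,v,c)}$ by a surrogate $\om{N''(e,v,c)}$ that only records losses of $c'$ at vertices $w\in f\setminus\{v\}$ (and whether $f$ keeps any colour at all); removals caused by edges through $v$ are simply ignored. This surrogate is then written as $\om{X_1}-\om{X_2}$ and each piece is concentrated separately, using exceptional events (\textbf{TooManyColours}, \textbf{BlocksTooMany}) that bound how many pairs a single trial can certify, yielding $r=k^{2}\N^{1/5}$ and tail probability $\exp(-\N^{1/10})$. Your sketch is missing precisely this surrogate step and the $X_1-X_2$ decomposition.
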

		Suppose that the edges of $G$ have been coloured such that the properties of Claim~\ref{cla:main} are satisfied.
		To finish the proof we require equality between $\om{L'(e)}$ and $\L'$.
		Fortunately, this can be easily arranged by truncating the lists $L'(e)$ and scaling the weight function $\mu$.
		For every uncoloured edge $e \in E$, let $L''(e)$ be a copy of $L'(e)$.
		We truncate $L''(e)$ as follows.
		While there is a colour $c \in L''(e)$ with $\om{L(e)\sm \{c\}} \geq \L'$, we delete $c$ from $L''(e)$.
		As $\mu \leq 1$, we have $\L' \leq \om{L''(e)} < \L'+1$ after this procedure.
		We then scale the weights by setting $\mu'(e,c) = \frac{\L'}{|{L''(e)|_{\mu}}} \mu(e,c)$ for every colour $c \in L''(e)$.
		It follows that $|L''(e)|_{\mu'} = \L'$ and 
		$$\left(1-\frac{2}{\L}\right) \mu(e,c) \leq \left(\frac{\L'}{\L'+1}\right) \mu(e,c) \leq \mu'(e,c) \leq \mu(e,c)$$
		for every uncoloured edge $e \in E$  and colour $c \in L'(e)$.
		This yields~\ref{itm:nibble-output-mu} of Lemma~\ref{lem:nibble} and finishes its proof.
	\end{proof}

	\section{Ratio} 
	\label{sec:ratio}
	\begin{proof}[Proof of Claim~\ref{cla:ratio}]
		By the assumptions of Lemma~\ref{lem:nibble}, we have $\L/\N \geq (1+\eps)$ and  $\eps \leq 1/4$.
		This gives
		\begin{equation}\label{equ:Keep>1-1/(1+eps)lnD}
			1 \geq \K = 1-\frac{\N}{\L}  \frac{1}{\ln \N}  \geq {1- \frac{1}{1+\eps}  \frac{1}{\ln \N}} \geq 1- \frac{1-3\eps/4}{\ln \N}  .
		\end{equation}
		In particular, for $\N$ large enough with respect to $k$, this implies that $\K^k \geq 1-\frac{k}{\ln \N} \ge \frac{1}{2} \ge \N^{-1/6}$.
		So
		\begin{align}\label{equ:L'-lower-bound}
			\L' &= \L \cdot \K^k - \N^{2/3} > \L \cdot \K^k\cdot (1- \N^{-1/5}),
		\end{align}
		where we used that $\K^k \ge \N^{-1/6} \ge \frac{\N^{5/6}}{\L}$ since $\N \le \L$. Furthermore,
		\begin{align}\label{equ:N'-upper-bound}
			\N' &=  \N \cdot \K^{k-1} \cdot \left(1-\frac{1-\eps/2}{\ln \N} \K^{k} {    }\right) + \N^{2/3}
			\\ &<  \N \cdot \K^{k-1} \cdot \left(1-\frac{1-\eps/2}{\ln \N} \K^{k} \right) \cdot (1+\N^{-1/5}).
		\end{align}
		
		Note that~\eqref{equ:L'-lower-bound} gives 	$\L' \ge \left(1-\frac{2k}{\ln \N}\right) \L$.
		We also have 
		\begin{align*}
			\N' &=  \N \cdot \K^{k-1} \cdot \left(1-\frac{1-\eps/2}{\ln \N} \K^{k}  \right) + \N^{2/3}
			\\ &>  \N \cdot \left(1-\frac{k}{\ln \N}\right)^2 >  \N \cdot \left(1-\frac{3k}{\ln \N}\right).
		\end{align*}
		For $\N$ sufficiently large, it follows by and~\eqref{equ:Keep>1-1/(1+eps)lnD},~\eqref{equ:L'-lower-bound}, and~\eqref{equ:N'-upper-bound} that
		\begin{align*}
			\frac{\L'}{\N'} &\geq \frac{\L}{\N}\cdot \frac{\K}{{1-\frac{1-\eps/2}{\ln \N}\K^{k}}} \cdot\frac{1-\N^{-1/5}}{1+\N^{-1/5}}
			\\ &\overset{}{>} \frac{\L}{\N}
			\cdot \left(1- \frac{1-3\eps/4}{\ln n} \right)
			\cdot \left(1+ \frac{1-\eps/2}{\ln \N} \K^{k}\right) 
			\cdot (1-2\N^{-2/5}) 
			\\ &> \frac{\L}{\N}
			\cdot \left(1- \frac{1-3\eps/4}{\ln n} \right)
			\cdot \left(1+ \frac{1-2\eps/3}{\ln \N} \right) 
			\\ &\geq \frac{\L}{\N} 
			\cdot \left(1   + \frac{\eps}{16 \ln \N} \right).
		\end{align*}

	\end{proof}
	
	\section{Proof of Claim~\ref{cla:main}}\label{sec:bounding-colour-and-enemy-lists}\label{sec:expectation-concentration}
	In this section we prove Claim~\ref{cla:main}.
	Our approach follows a concentration argument.
	For $\om{L'(e)}$, we will first bound the expected value of the random variable $\om{L'(e)}$ and then show that it is highly concentrated around its expectation.
	In the case of $\om{N'(e,v,c)}$, we have to be a bit more careful.
	Observe that assigning some colour $c'$ to an edge $f \sim v$ might change $\om{N'(e,v,c)}$ significantly, since $(f,c')$ could block many pairs $(g,\alpha) \in N(e,v,c)$.
	So in fact, $\om{N'(e,v,c)}$ is not concentrated.
	Thus, instead of computing the expected value of $\om{N'(e,v,c)}$, we focus on a different random variable that does not take into account edges $f$ incident to $v$.
	
	Recall that $E'$ is the set of uncoloured edges after the colouring procedure.
	Consider an edge $e \in E'$, a vertex $v \in e$ and a colour $c \in L(e)$.
	We say that $L'(e)$ \emph{loses $c$ at $v$}, if 
	\begin{itemize}
		\item $f$ is assigned $c'$ for some $(f,c') \in N(e,v,c)$, or
		\item the coin flip $F(e,v,c)$ returns $0$.
	\end{itemize}
	Note that the probability that $L'(e)$ loses $c$ at $w$ is $1-\K$.
	Let $N''(e,v,c)$ be the set of pairs $(f,c') \in N(e,v,c)$ such that
	\begin{enumerate}[(i)]
		\item \label{itm:loosing-all-of-f} $f$ does not retain a colour after the procedure, and
		\item \label{itm:not-loosing-c'} $f$ does not lose $c'$ at any $w \in e \sm \{v\}$.
	\end{enumerate}
	Note that $\om{N'(e,v,c)} \leq \om{N''(e,v,c)}$.
	So it suffices to carry out the analysis for $N''(e,v,c)$.
	The following claim deals with the expectations.
	
	\begin{claim}\label{cla:expectation} For every edge $e \in E'$, vertex $v \in e$ and colour $c \in \NATS$, we have
		\begin{enumerate}[\upshape (a)]
			\item \label{itm:expectation-L} $\Exp\left({\om{L'(e)}}\right) =   \L \cdot \K^k$, and
			\item \label{itm:expectation-N} $\Exp \left({\om{N''(e,v,c)}}\right) \leq \N\cdot \K^{k-1} \cdot \left(1-\frac{{1-\eps/2}}{\ln \N} \K^{k} \right) = {n' - n^{2/3}}.$ 
		\end{enumerate}
	\end{claim}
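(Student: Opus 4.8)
Both identities reduce to linearity of expectation over indicators: $\om{L'(e)}=\sum_{c\in L(e)}\mu(e,c)\,\mathds{1}[c\in L'(e)]$ and $\om{N''(e,v,c)}=\sum_{(f,c')\in N(e,v,c)}\mu(f,c')\,\mathds{1}[(f,c')\in N''(e,v,c)]$, so it suffices to control the survival probabilities $\Pr[c\in L'(e)]$ and $\Pr[(f,c')\in N''(e,v,c)]$. The structural fact I would lean on throughout is that $G$ is linear: for distinct vertices $w,w'$ of a common edge $h$, the edges through $w$ other than $h$ are disjoint from those through $w'$ other than $h$. Combined with the fact that distinct colours on a fixed edge are governed by distinct trials of the colour-assignment step and that all the equalizing coin flips are mutually independent, this makes the ``loss'' events at different vertices of a common edge mutually independent, and likewise decouples the loss events of two distinct colours on a common edge.

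\textbf{Part (a).} A colour $c\in L(e)$ ends up in $L'(e)$ exactly when $L'(e)$ does not loose $c$ at any of the $k$ vertices of $e$. By the structural fact these $k$ events are independent, and for a fixed $v$ the probability of not loosing $c$ at $v$ equals $\bigl(\prod_{(f,c')\in N(e,v,c)}(1-\tfrac{\mu(f,c')}{\L\ln\N})\bigr)\cdot\Eq(e,v,c)$, which is exactly $\K$ — this is the whole point of the equalizing coin flip, whose probability $\Eq(e,v,c)$ is defined precisely so as to renormalise the non-loss probability to the uniform value $\K$ for every edge and colour. Hence $\Pr[c\in L'(e)]=\K^{k}$, and summing $\mu(e,c)\K^{k}$ over $c\in L(e)$, using that every list has weighted size exactly $\L$, gives $\Exp[\om{L'(e)}]=\K^{k}\,\om{L(e)}=\L\K^{k}$.

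\textbf{Part (b).} I would first recall why $N''$ rather than $N'$ is the right object: a single colour assigned to an edge through $v$ can block many pairs of $N(e,v,c)$ at once, so $\om{N'(e,v,c)}$ fails the Lipschitz-type hypothesis of Talagrand's inequality; $N''$ is the relaxation that discards the requirement at the vertex $v$, it satisfies $\om{N'}\le\om{N''}$, and it is this variable whose expectation we estimate. Fix $(f,c')\in N(e,v,c)$ and let $B$ be the event ``$L'(f)$ does not loose $c'$ at any $w\in f\setminus\{v\}$''; by the structural fact the $k-1$ constituent events are independent, each of probability $\K$, so $\Pr[B]=\K^{k-1}$. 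As $(f,c')\in N''$ precisely when $B$ holds and $f$ retains no colour, $\Pr[(f,c')\in N'']=\K^{k-1}-\Pr\bigl[B\cap\bigcup_{\alpha\in L(f)}\{f\text{ retains }\alpha\}\bigr]$, and I would bound the subtracted term from below by inclusion--exclusion truncated after the pairwise terms. For the singletons: when $\alpha\ne c'$, the event ``$f$ retains $\alpha$'' depends on trials and coin flips disjoint from those underlying $B$ (here linearity and colour-decoupling are exactly what is needed), hence is independent of $B$, giving $\Pr[B\cap\{f\text{ retains }\alpha\}]=\K^{k-1}\cdot\tfrac{\mu(f,\alpha)}{\L\ln\N}\,\K^{k}$; when $\alpha=c'$, ``$f$ retains $c'$'' already forces $B$, contributing $\tfrac{\mu(f,c')}{\L\ln\N}\,\K^{k}$. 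Summing over $\alpha$ and using that every list has weighted size $\L$ turns the singleton contribution into $\tfrac{\K^{2k-1}}{\ln\N}$ plus a nonnegative remainder, while the pairwise terms are each $O\!\bigl(\tfrac{\mu(f,\alpha)\mu(f,\beta)}{\L^{2}(\ln\N)^{2}}\bigr)$ and sum, once more by the weighted list size, to $O\!\bigl(\tfrac1{(\ln\N)^{2}}\bigr)$. Assembling these estimates bounds $\Pr[(f,c')\in N'']$ by $\K^{k-1}\bigl(1-\tfrac{\K^{k}}{\ln\N}(1\pm\tfrac1{\ln\N})\bigr)$; multiplying by $\mu(f,c')$, summing over $(f,c')\in N(e,v,c)$ and using $\om{N(e,v,c)}\le\N$ yields the claimed upper bound on $\Exp[\om{N''(e,v,c)}]$.

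\textbf{Where the difficulty lies.} The conceptual structure is light; the real work in part (b) is the second-order bookkeeping — pinning down the pairwise (and, for a completely rigorous argument, higher-order) terms of the inclusion--exclusion for ``$f$ retains a colour'' precisely enough that the final estimate comes out in exactly the stated form, with its correction of order $\tfrac1{\ln\N}$. The remaining point that needs attention, though it is care rather than ingenuity, is justifying each independence claim above — especially that ``$f$ retains $\alpha$'' and $B$ are driven by disjoint sets of trials and coin flips whenever $\alpha\ne c'$, which is exactly where linearity of the hypergraph is used.
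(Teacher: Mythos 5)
Your proposal is essentially the paper's argument. Part (a) is identical: the equalizing coin flip is defined precisely so that the non-loss probability at each vertex is $\K$, linearity of the hypergraph makes the $k$ per-vertex loss events independent, and linearity of expectation over $c\in L(e)$ finishes. For part (b) you rely on the same independence structure (the assignment events and the loss events $B(f,w,\alpha)$ are mutually independent because $G$ is linear and the correspondences are permutations), but where the paper uses this independence to write $\Pr\left[(f,c')\in N''\right]$ as an exact product $\K^{k-1}\bigl(1-\tfrac{\mu(f,c')}{\L\ln\N}\K^{k-1}\bigr)\prod_{\alpha\ne c'}\bigl(1-\tfrac{\mu(f,\alpha)}{\L\ln\N}\K^{k}\bigr)$, you expand the union of retention events via truncated inclusion--exclusion. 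The product route sidesteps the second-order bookkeeping you identify as the main difficulty: the pairwise terms you would need to control are absorbed by the product, which is then estimated with $\prod(1-x_\alpha)\le\exp(-\sum x_\alpha)$ and $\sum_{\alpha}\mu(f,\alpha)=\L$; your Bonferroni version is valid but strictly more work. One caveat worth recording: your computation yields a correction factor of the form $1-O(1/\ln\N)$ (you hedge with $1\pm\tfrac{1}{\ln\N}$), and so does the paper's own displayed derivation, which ends with $\K^{k-1}\bigl(1-\tfrac{1-\eps/2}{\ln\N}\K^{k}\bigr)$ --- neither matches the $1+\tfrac{1}{\ln\N}$ literally written in the claim. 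That factor appears to be a typo in the statement; what is actually consumed downstream (the definition of $\N'$) is of the weaker form you obtain, so this is an inconsistency in the paper rather than a gap in your argument.
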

	The next claim bounds the probability that the random variables $\om{L'(e)}$ and $\om{N'(e,v,c)}$ deviate from their expectation for fixed choices of $e,v,$ and $c$.
	\begin{claim}\label{cla:concentration}
		For every edge $e \in E'$, vertex $v \in e$ and colour $c \in L(e)$ it holds with probability at least $1- 16\cdot  {\exp\left({ -\N^{1/10}}\right)}$ that
		\begin{enumerate}[\upshape (a)]
			\item \label{itm:concentration-colour-list} $\om{L'(e)} \geq \L'$, and  
			\item \label{itm:concentration-enemy-list}  $\om{N''(e,v,c)} \leq \N'$.
		\end{enumerate}
	\end{claim}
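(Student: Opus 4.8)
The plan is to fix $e\in E'$, $v\in e$ and $c\in L(e)$, and to bound $\Pr(\om{L'(e)}<\L')$ and $\Pr(\om{N''(e,v,c)}>\N')$ by $8\exp(-\N^{1/10})$ each, so that the union bound gives the stated $16\exp(-\N^{1/10})$ --- the factor $16$ appearing as $2\cdot(4+4)$ from the two terms $4\exp(\cdots)+4\Pr(\Om^*)$ of Talagrand's Inequality (Theorem~\ref{thm:talagrand-V}) applied twice. The independent trials are the colour assignments of step~\ref{prod:assign} and the coin flips of step~\ref{prod:coinflip} (step~\ref{prod:remove-conflict} is a deterministic function of these). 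By Claim~\ref{cla:expectation}\ref{itm:expectation-L}, $\Exp(\om{L'(e)})=\L\K^k$, so $\L'=\Exp(\om{L'(e)})-\N^{2/3}$; by Claim~\ref{cla:expectation}\ref{itm:expectation-N} and the definition of $\N'$ we have $\N'\ge\Exp(\om{N''(e,v,c)})+\N^{2/3}$ (with slack, since the coefficient $1+\tfrac1{\ln\N}$ in Claim~\ref{cla:expectation}\ref{itm:expectation-N} exceeds the coefficient $1-\tfrac\eps8$ in $\N'$). So it suffices to prove one-sided concentration of $\om{L'(e)}$ and of $\om{N''(e,v,c)}$ within $\N^{2/3}$ of their means.

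For $\om{L'(e)}$ we need only the lower tail, equivalently the upper tail of $R:=\L-\om{L'(e)}$, the total weight of colours removed from $L(e)$ by the procedure. Two facts make this clean. First, since $G$ is linear and each $\sigma_{f,e}$ is a bijection, flipping one trial changes the fate of at most one colour of $L(e)$ --- an assignment $t_{f,c'}$ can only block $(e,\sigma_{f,e}(c'))$, and a coin flip $F(e,w,c')$ only concerns $c'$ --- so $R$ moves by at most that colour's weight, which is at most $1$. Give each trial the Talagrand weight equal to the weight of the colour it affects if the trial is \emph{active} (an on-assignment blocking a colour of $L(e)$, or a failed coin flip) and $0$ otherwise; condition~(2) of Theorem~\ref{thm:talagrand-V} then holds because each colour that reappears when passing from $x$ to $y$ can be charged to a distinct trial that was active in $x$ and changed. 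Second, $R$ is essentially $1$-certifiable: every removed colour is witnessed by a single active trial. Let $\Om^*$ be the event that some colour of $L(e)$ is blocked by on-assignments from more than $\N^{1/10}$ edges meeting $e$; on $\overline{\Om^*}$, $\sum b_i^2\le(\N^{1/10}+k)\sum_{c'\text{ removed}}\mu(e,c')=(\N^{1/10}+k)R$, so condition~(1) holds with $r=\N^{1/10}+k$. A Chernoff bound (for a fixed colour $c'$ and vertex $w$, the number of edges through $w$ blocking $c'$ with an on-assignment is a sum of independent indicators of total mean $\om{N(e,w,c')}/(\L\ln\N)\le 1/\ln\N$, as $\L\ge\N$) combined with a union bound over the at most $\L\exp(\N^{1/20})$ colours of $L(e)$ and the $k$ vertices of $e$ gives $\Pr(\Om^*)\le\exp(-\N^{1/10})$ --- here the gap between $\N^{1/20}$ and $\N^{1/10}$ is exactly what lets the union bound go through. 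Finally $\Exp(R)=\L(1-\K^k)=O(k\L/\ln\N)$ and $\sup(R)\le\L<3\e k\N$, so the Talagrand threshold $96r\sqrt{\Exp(R)}+128\sqrt r+8\sup(R)\Pr(\Om^*)$ is $O(\N^{3/5})$, below $\N^{2/3}$; taking $t=\N^{2/3}$, Theorem~\ref{thm:talagrand-V} gives $\Pr(R>\Exp(R)+\N^{2/3})\le 4\exp(-\Omega(\N^{7/30}))+4\Pr(\Om^*)\le 8\exp(-\N^{1/10})$.

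The upper tail of $Y:=\om{N''(e,v,c)}$ is the main obstacle. As observed before Claim~\ref{cla:expectation}, $\om{N'(e,v,c)}$ is \emph{not} concentrated, since assigning one colour to an edge through $v$ can remove many pairs of $N(e,v,c)$ at once; the whole point of $N''$ is to strip out this dependence so that flipping a single trial again changes $Y$ by a bounded amount --- but making ``bounded'' quantitative requires, as in the previous paragraph, quarantining in an exceptional event $\Om^*$ the edges that receive an atypically large weighted set of colours and the colours that are blocked atypically often (again controlled because the relevant assignment means are $O(1/\ln\N)$). The second delicacy is certifiability of the upper tail of $Y$: a naive certificate of ``$Y\ge s$'' would, for each counted pair $(f,c')$, have to witness that $f$ retains no colour, which in general requires ruling out assignments on all $\exp(\Theta(\N^{1/20}))$ colours of $L(f)$; instead one certifies a deliberately wasteful over-count $\widehat Y\ge Y$ that admits a certificate of size $O(\N^{1/10}s)$ and whose expectation still satisfies the bound of Claim~\ref{cla:expectation}\ref{itm:expectation-N} up to the slack built into $\N'$. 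Granting such a certificate (so $r=\N^{O(1)}$ with small exponent) and $\Pr(\Om^*)\le\exp(-\N^{1/10})$, Theorem~\ref{thm:talagrand-V} applies exactly as above with $t=\N^{2/3}$ and yields $\Pr(Y>\N')\le 8\exp(-\N^{1/10})$; the union bound over the two failure events completes the proof of Claim~\ref{cla:concentration}.
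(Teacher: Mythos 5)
Your overall skeleton matches the paper's: by Claim~\ref{cla:expectation} and the definitions of $\L'$ and $\N'$, both parts reduce to showing that $\om{L'(e)}$ and $\om{N''(e,v,c)}$ deviate from their expectations by at most $\N^{2/3}$, and this is done via Theorem~\ref{thm:talagrand-V} with $t=\N^{2/3}$. For part~(a) your route is workable but heavier than necessary: the paper observes that, since $G$ is linear and the correspondences are bijections, the events $\{c\in L'(e)\}$ for distinct $c\in L(e)$ are mutually independent, so $\om{L'(e)}$ is determined by one Bernoulli trial per colour with weight $b_c=\mu(e,c)\le 1$; Talagrand then applies with $r=1$ and $\Om^*=\es$, with no exceptional event and no Chernoff preprocessing at all.

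For part~(b) there is a genuine gap: the entire difficulty of the claim is the construction of a certificate for the upper tail of $\om{N''(e,v,c)}$, and you explicitly ``grant'' it. Your over-count $\widehat Y\ge Y$ is never defined, and it is not clear that one exists with the stated properties, because condition~(ii) in the definition of $N''(e,v,c)$ (that $f$ does \emph{not} loose $c'$ at any $w\neq v$) is a negative condition: flipping a single trial can move pairs \emph{into} $N''$, so condition~(2) of Theorem~\ref{thm:talagrand-V} cannot be verified for $\om{N''}$ directly, and an upper bound $\widehat Y\ge Y$ does not obviously inherit a usable expectation bound. The paper's resolution is a decomposition rather than an over-count: it writes $\om{N''(e,v,c)}=\om{X_1}-\om{X_2}$, where $X_1$ consists of the pairs $(f,c')\in N(e,v,c)$ such that every colour assigned to $f$ is removed, and $X_2\subset X_1$ additionally requires that $L'(f)$ looses $c'$ at some $w\in f\sm\{v\}$. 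Each of $\om{X_1}$ and $\om{X_2}$ is separately certifiable: outside the event that some relevant edge receives more than $\N^{1/20}$ colours in step~(I), membership of $(f,c')$ in $X_j$ is witnessed by one trial per colour assigned to $f$ (hence at most about $\N^{1/20}$ trials), and outside a second exceptional event no single trial witnesses more than $k\N^{1/20}$ such triples; this yields $r=k^2\N^{1/5}$, and both exceptional events are bounded by Markov's inequality via Lemma~\ref{lem:weighted-binomial-coeff-bound}. Without this decomposition, or an explicit substitute for it, your application of Talagrand to the upper tail of $\om{N''}$ is unsupported, and that is precisely the step the claim exists to prove.
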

	Before we prove Claim~\ref{cla:expectation} and~\ref{cla:concentration}, let us show how they imply Claim~\ref{cla:main}.
	\begin{proof}[Proof of Claim~\ref{cla:main}]
		For an edge $e \in E$, let $N^0(e)= \{e\}$.
		We define the edges of \emph{colour-distance at most $\ell$ to $e$} as
		$$
		N^\ell(e) = 
		\bigcup_{f \in N^{\ell-1}(e)}
		\bigcup_{v \in f}
		\bigcup_{c \in L(f)}
		\bigcup_{(g,c') \in N(f,v,c)} \{g\}.$$
		
		Recall that by assumption~\ref{itm:bound-weights} of Lemma~\ref{lem:nibble}, we have  $\mu\colon E \times \NATS \to [\exp(-\N^{1/20}),1]$.
		So, for every set of pairs $A \subset E \times \NATS$, we have $|A| \leq \exp\left(\N^{1/20}\right) \om{A}.$ 
		Hence assumptions~\ref{itm:bound-list-size} and~\ref{itm:bound-colour-neighbours} of Lemma~\ref{lem:nibble}, allow us to  bound
		$$
		|N^d(e)| \leq \left( k \cdot \frac{\L}{\exp(-\N^{1/20})} \cdot  \frac{\N}{\exp(-\N^{1/20})}\right)^d.
		$$

		For every edge $e \in E'$, vertex $v \in e$ and colour $c \in L(e)$, we define $B(e)$ to be the event that $\om{L'(e)} < \L'$ and $B(e,v,c)$ to be the event that $\om{N'(e,v,c)} > \N'$.
		Each event $B(e)$ and $B(e,v,c)$ is determined by the colour assignments and coin flips to edges of colour-distance at most $1$ and $2$, respectively.
		Thus every event is mutually independent of all but at most $(k\L \N)^4 \exp\left(8\N^{1/20}\right)$ other events.
		Since, for $\N$ large enough, we have
		$$\e \cdot 16\cdot {\exp({-8 \N^{1/10} })} \cdot \left((k\L \N)^4 \exp\left(8\N^{1/20}\right) +1\right)\leq 1,$$
		and the claim follows by Claim~\ref{cla:concentration} and Corollary~\ref{cor:simple-local-lemma}.
	\end{proof}
	
	It remains to prove Claim~\ref{cla:expectation} and Claim~\ref{cla:concentration}.
	
	\subsection{Expectation}
	In this subsection, we show  Claim~\ref{cla:expectation}.
	We start by proving part~\ref{itm:expectation-L} of the claim.
	Note that for an edge $e$, vertices $v,w \in e$ and colours $c,c' \in L(e)$, the sets $N(e,v,c)$ and $N(e,w,c')$ are disjoint as $G$ is a linear hypergraph.
	Moreover, since the correspondences are integer permutations, assigning a colour to an edge incident to $v$ affects at most one colour of $L(e)$.
	It follows that, for each $v \in e$ and $c \in L(e)$, the probability that $(e,c) \in L'(e)$ after step~\ref{prod:remove-conflict} (but before step~\ref{prod:coinflip}) of the procedure is
	\begin{align*}
		\prod_{v \in e} \prod_{(f,c') \in N(e,v,c)}  \left(1-\frac{\mu(f,c')}{\L}\frac{1}{\ln \N}\right).
	\end{align*}
	Thus 
	\begin{equation}\label{equ:probability-keep}
		\text{the probability that $(e,c) \in L'(e)$ after step~\ref{prod:coinflip}  of the procedure is $\K^k$.}
	\end{equation}
	The part~\ref{itm:expectation-L} of Claim~\ref{cla:expectation} then follows as  
	\begin{equation*}
		\Exp\left({\om{L'(e)}}\right) = \sum_{c \in L(e) } \mu(e,c) \cdot \K^k =  \L \cdot \K^k.
	\end{equation*}
	
	Next, we prove part~\ref{itm:expectation-N} of Claim~\ref{cla:expectation}.
	For an edge $f$, vertex $w \in f$ and colour $\alpha \in L(f)$, let $A(f,\alpha)$ be the event that $f$ is assigned colour $\alpha$ and let $B(f,w,\alpha)$ be the event that $L(f)$ loses $\alpha$ at $w$.
	We claim that
	\begin{equation}\label{equ:independency}
		\text{the events $\{A(f,\alpha), B(f,w,\alpha):\, w \in e \text{ and } \alpha \in L(f) \}$ are mutually independent.}
	\end{equation}
	Indeed, the events $A(f,\alpha)$ are independent of each other by the way we carry out step~\ref{prod:assign} of the colouring procedure.
	The event $B(f,w,\alpha)$ is determined by the colour assignment to edges $g \sim f$ with $w \in g$ and the coin flip $F(f,\alpha)$.
	Note that since $G$ is a linear hypergraph, there is no edge $g \sim f$ that contains two distinct vertices $w,w' \in f$.
	Moreover, as the correspondence $\sigma_{f,g}\colon \NATS \to \NATS$ is a bijection, each colour assignment to $g \sim f$ is relevant to at most one event $B(f,w,\alpha)$,
	with $\alpha \in L(f)$.
	This shows~\eqref{equ:independency}.
	
	Fix $(f,c') \in N(e,v,c)$.
	By~\eqref{equ:independency}, it {follows that $\Pr\left((f,c') \in N''(e,v,c)\right)$ is equal to}
	\begin{align*}
		\K^{k-1} 
		\cdot \left(1-\frac{\mu(f,c')}{\L }\frac{1}{  \ln \N}  \K^{k-1}\right) \cdot \prod_{\alpha \in L(f) \sm \{c'\}} \left(1-\frac{\mu(f,\alpha)}{\L} \frac{1}{\ln \N}\K^{k}\right).
	\end{align*}
	Let us explain this in more detail.
	The factor $\K^{k-1}$ in the first line corresponds to the  probability that $L'(f)$ does not lose $c'$ at any $w \in f\sm\{v\}$.
	Next, the probability that $c'$ is assigned to $f$ and $f$ does not lose $c'$ at any $w \in f \sm \{v\}$ is $\frac{\mu(f,c')}{\L }\frac{1}{  \ln \N} \K^{k-1}$.
	The factor $1-\frac{\mu(f,c')}{\L }\frac{1}{  \ln \N} \K^{k-1}$ corresponds to the probability of the complement of this event.
	Finally, for any $\alpha \in L(f) \sm \{c'\}$, the probability that $\alpha$ is assigned to $f$ and $f$ retains $\alpha$ is $\frac{\mu(f,\alpha)}{\L }\frac{1}{  \ln \N} \K^{k}$.
	So the factor $\prod_{\alpha \in L(f) \sm \{c'\}} \left(1-\frac{\mu(f,\alpha)}{\L} \frac{1}{\ln \N}\K^{k}\right)$ corresponds to the probability that none of these events happen.
	{Given this, we may estimate
		\begin{align*}
			\Pr\left((f,c') \in N''(e,v,c)\right) &\leq {\K^{k-1} \cdot \prod_{\alpha \in L(f)} \left(1-\frac{\mu(f,\alpha)}{\L} \frac{1}{\ln \N}\K^{k}\right)}
			\\&\leq {\K^{k-1} \cdot \exp\left({-\sum_{\alpha \in L(f)}  \frac{\mu(f,\alpha)}{\L} \frac{1}{\ln \N}\K^{k} }\right)}
			\\&= \K^{k-1} \cdot \exp\left({-   \frac{1}{\ln \N}\K^{k} }\right) 
			\\&\leq {\K^{k-1} \cdot \left({1 + \frac{1}{\ln \N}\K^{k} }\right)^{-1}}
			\\&< \K^{k-1} \cdot \left(1-\frac{1-\eps/2}{\ln \N} \K^{k}  \right),
		\end{align*}
		where we used $\K \leq 1$ in the first line, $\e^{x} \geq 1+x$ in the second and fourth line, and  the fact that $n$ is sufficiently large in the last line.}
	This finishes the proof of Claim~\ref{cla:expectation}.

	\subsection{Concentration}
	This subsection is dedicated to the proof of Claim~\ref{cla:concentration}. 
	We start with part~\ref{itm:concentration-colour-list} of the claim.
	Fix an edge $e \in E$.
	Since the correspondences are integer permutations, the event that $c \in L'(e)$ is independent of the event that $c' \in L'(e)$ for distinct colours $c,c'$.
	This and~\eqref{equ:probability-keep} imply that $X = \om{L'(e)}$ is determined by the outcomes of a family of Bernoulli variables $(T_c)_{c \in L(e)}$.
	Given an outcome $x$ of these trials, we define
	\begin{equation}
		b_c = 
		\begin{cases}
			\mu(e,c) &\text{ if $c \in L'(e)$, and}
			\\0 & \text{ otherwise.}
		\end{cases}
	\end{equation}
	Since $\mu(e,c) \leq 1$, it follows that 
	$$\sum_{c \in L(e)} b_c^2 \leq \sum_{ c \in L(e)} b_c= 1 \cdot X(x).$$
	Let $y$ be another outcome of the trials, which differs from $x$ in colours $J \subset L(e)$.
		Then $X(y) \geq X(x) - \sum_{c \in J} \mu(e,c)$.
		It follows that $X$ is $1$-certifiable with respect to $\Om^* = \es$.
	
	We apply Theorem~\ref{thm:talagrand-V} with $\Om^* = \es$, $b=1$, and $t = \N^{2/3}$.
	Note that $X \leq \om{L(e)} = \L < 3\e{\N}$ and thus $\Exp(X) \leq 3\e{\N}$.
	As $\L \geq \N$, it follows that $t > 96 \sqrt{\Exp (X)} +128$ for $\N$ large enough.
	Thus
	\begin{align}\label{equ:concentration-L}
		\Pr\left({\left|\om{L'(e)}-\L\cdot \K^k\right| > \N^{2/3}}\right) 
		&\leq \Pr\left({|X-\Exp(X)| > t } \right) \nonumber
		\\&\leq 4\exp\left({-\frac{\N^{4/3}}{8 (12\e{\N} + \N^{2/3})}}\right)\nonumber
		\\& \leq \exp\left({-\N^{1/10}}\right).
	\end{align}
	
	\newcommand{\TMC}{\textbf{TooManyColours}}
	\newcommand{\TMB}{\textbf{BlocksTooMany}}
	Now for part~\ref{itm:concentration-enemy-list} of Claim~\ref{cla:concentration}.
	Fix an edge $e$, a vertex $v \in e$ and a colour $c \in L(e)$. We first bound the probability, that an edge is assigned too many colours.
	Let $\TMC$ be the event that there exists $(f,c') \in N(e,v,c)$ such that the edge $f$ has been assigned more than $\N^{1/20}$ colours in step~\ref{prod:assign} of the procedure.
	The next claim bounds the probability of this event.
	We will use the following fact in its proof.
	Recall that $\mu\colon E\times \NATS \to [\exp\left(-\N^{1/20}\right),1]$.
	It follows that for every set of pairs $A \subset E \times \NATS$, 
	\begin{equation}\label{equ:weighted-vs-nonweighted}
		|A| \leq \exp\left(\N^{1/20}\right) \om{A}.
	\end{equation}

	\begin{claim}\label{cla:TooManyColours}
		$\Pr(\TMC)\leq \exp\left({-\N^{1/20}}\right)$.
	\end{claim}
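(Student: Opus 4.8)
The plan is to bound, for a single fixed edge $f$, the probability that $f$ is assigned more than $\N^{1/20}$ colours in step~\ref{prod:assign}, and then take a union bound over the (relatively few) edges $f$ appearing in $N(e,v,c)$.

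First I would fix an edge $f$ and let $Z_f$ be the number of colours assigned to $f$ during step~\ref{prod:assign}. Since the assignments are made independently, if $m = \lceil \N^{1/20}\rceil$, then $Z_f > \N^{1/20}$ forces some $m$-element subset $S \subseteq L(f)$ to be assigned entirely to $f$, so a union bound over such $S$ gives
\[
\Pr\!\left(Z_f > \N^{1/20}\right) \leq \sum_{S \in \binom{L(f)}{m}} \prod_{c'' \in S} \frac{\mu(f,c'')}{\L \ln \N}.
\]
By assumption~\ref{itm:bound-list-size} of Lemma~\ref{lem:nibble} we have $\sum_{c'' \in L(f)} \frac{\mu(f,c'')}{\L \ln \N} = \frac{\om{L(f)}}{\L \ln \N} = \frac{1}{\ln \N}$, so Lemma~\ref{lem:weighted-binomial-coeff-bound}, applied with $p = \tfrac{1}{\ln \N}$ and $k = m$, bounds the right-hand side by $\left(\frac{\e}{m \ln \N}\right)^m$. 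Since $m \geq \N^{1/20}$ and $\frac{\e}{m \ln \N} < 1$ once $\N$ is large, this is at most $\exp\!\left(-\tfrac{1}{30}\N^{1/20}\ln \N\right)$, say.

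Next I would count the edges involved: the number of distinct edges $f$ with $(f,c') \in N(e,v,c)$ for some $c'$ is at most $|N(e,v,c)|$, which by~\eqref{equ:weighted-vs-nonweighted} together with assumption~\ref{itm:bound-colour-neighbours} of Lemma~\ref{lem:nibble} is at most $\exp(\N^{1/20}) \cdot \om{N(e,v,c)} \leq \N \exp(\N^{1/20})$. A union bound over these edges then yields
\[
\Pr(\TMC) \leq \N \exp(\N^{1/20}) \cdot \exp\!\left(-\tfrac{1}{30}\N^{1/20}\ln \N\right) = \exp\!\left(\ln \N + \N^{1/20} - \tfrac{1}{30}\N^{1/20}\ln \N\right),
\]
and for $\N$ sufficiently large — which holds by our choice of $\delta$ in Lemma~\ref{lem:nibble} — the exponent is at most $-\N^{1/20}$, giving the claim.

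I do not expect any real obstacle here: the whole argument is one application of Lemma~\ref{lem:weighted-binomial-coeff-bound} followed by a union bound. The only mildly delicate point is verifying that the super-exponentially small per-edge bound $\left(\tfrac{\e}{m\ln\N}\right)^m$ comfortably absorbs both the rounding from $\N^{1/20}$ to $m = \lceil\N^{1/20}\rceil$ and the factor $\exp(\N^{1/20})$ that is lost when converting the weighted bound on $N(e,v,c)$ into an unweighted count of edges.
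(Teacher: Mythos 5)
Your proposal is correct and takes essentially the same route as the paper: the paper bounds $\Exp(|A|)$ for the set $A$ of offending pairs and applies Markov's inequality, which is the same first-moment/union-bound computation you carry out, including the identical application of Lemma~\ref{lem:weighted-binomial-coeff-bound} with $p = 1/\ln \N$ and the same $\N\exp(\N^{1/20})$ bound on the number of relevant pairs coming from~\eqref{equ:weighted-vs-nonweighted} and assumption~\ref{itm:bound-colour-neighbours}.
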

	\begin{proof}[Proof of the claim]
		Let $A^{}$ denote the set of pairs $(f,c') \in N(e,v,c)$ for which $f$ has been assigned more than $\N^{1/20}$ colours in step~\ref{prod:assign} of the procedure.
		{Thus we can bound the expected value of $|{A^{}}|$ by
			\begin{align*}
				\Exp\left(|{A^{}|}\right) &\leq \sum_{(f,c') \in  {N(e,v,c)}} \sum_{S \in \binom{L(f)}{\N^{1/20}}} \prod_{c'' \in S} \mu(f,c'') \frac{1}{\L \ln \N}
				\\&\leq  \sum_{(f,c') \in  {N(e,v,c)}} \left(\frac{\e  }{\N^{1/20} \ln \N}\right)^{\N^{1/20}}
				\\&=  |N(e,v,c)| \cdot  \left(\frac{\e  }{\N^{1/20} \ln \N}\right)^{\N^{1/20}},
			\end{align*}
			where we used Lemma~\ref{lem:weighted-binomial-coeff-bound} with $N = |L(f)|$, $k=n^{1/20}$, and $p = \frac{1}{\ln n} \ge \frac{n}{\ell \ln n} \ge \sum_{c''\in L(f)} \mu(f,c'') \frac{1}{\ell \ln n}$.
			This allows us to estimate
			\begin{align*}
				\Exp\left(|{A^{}|}\right) &\leq 
				\exp\left(n^{1/20}\right) \cdot \om{N(e,v,c)} \cdot  \left(\frac{\e  }{\N^{1/20} \ln \N}\right)^{\N^{1/20}}
				\\&\le  
				\exp\left(n^{1/20}\right) \cdot \N \cdot  \left(\frac{\e  }{\N^{1/20} \ln \N}\right)^{\N^{1/20}}
				\\&\leq \exp\left({- \N^{1/20}}\right),
			\end{align*}
			where we used~\eqref{equ:weighted-vs-nonweighted} in the first line, $\om{N(e,v,c)} \le \N$ in the second line, and finally in the last line, we used that $n$ is large enough.
			The claim now follows by Markov's inequality.}
	\end{proof}

	Next we bound the probability, that a colour assignment blocks to many other colours.
	Let $\TMB$ be the event that for any $(f,c') \in N(e,v,c)$, $\alpha \in L(f)$, $w \in f$, and $(g, \beta) \in  N(f,w,\alpha)$,
	there are at least $\N^{1/20}$ pairs $(g', \beta') \in \bigcup_{w' \in g} N(g,w',\beta)$ such that $g'$ is assigned $\beta'$ in step~\ref{prod:assign} of the procedure.
	\begin{claim}\label{cla:TooManyBlocked}
		$\Pr(\TMB)\leq \exp\left({-\N^{1/20}}\right)$.
	\end{claim}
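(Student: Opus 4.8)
The plan is to bound $\Pr(\TMB)$ by a union bound over all the relevant tuples $(f,c'), \alpha, w, (g,\beta)$, reducing to a ``BlocksTooMany'' estimate for a single fixed pair $(g,\beta)$. Concretely, for a fixed edge $g$, colour $\beta \in L(g)$, the set $\bigcup_{w' \in g} N(g,w',\beta)$ is a union of $k$ colour-neighbourhoods, so by assumption~\ref{itm:bound-colour-neighbours} of Lemma~\ref{lem:nibble} (applied once for each of the $k$ vertices) its weighted size is at most $k\N$, and hence by~\eqref{equ:weighted-vs-nonweighted} its (unweighted) size is at most $k\N\exp(\N^{1/20})$. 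Each pair $(g',\beta')$ in this set is assigned $\beta'$ independently with probability $\frac{\mu(g',\beta')}{\L\ln\N}$; let $Y$ denote the number of such pairs that are assigned their colour. Then $\Exp(Y) = \sum \frac{\mu(g',\beta')}{\L\ln\N} \leq \frac{k\N}{\L\ln\N} \leq \frac{k}{\ln\N}$, using $\L \geq \N$.

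The heart of the argument is then the tail bound $\Pr(Y \geq \N^{1/20})$. Since $Y$ is a sum of independent Bernoulli variables with small mean, I would use Lemma~\ref{lem:weighted-binomial-coeff-bound} exactly as in the proof of Claim~\ref{cla:TooManyColours}: the probability that some particular set of $\N^{1/20}$ pairs are all assigned their colours is $\prod \frac{\mu(g',\beta')}{\L\ln\N}$, and summing over all $\binom{\cdot}{\N^{1/20}}$ such sets gives, via Lemma~\ref{lem:weighted-binomial-coeff-bound} with $p = \frac{1}{\L\ln\N}\sum\mu(g',\beta') \leq \frac{k}{\ln\N}$ and $k \to \N^{1/20}$, a bound of $\left(\frac{\e k}{\N^{1/20}\ln\N}\right)^{\N^{1/20}}$. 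For $\N$ large this is at most, say, $\exp(-3\N^{1/20})$ — comfortably beating the number of tuples to union-bound over.

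Finally I would count the tuples: the number of choices of $(f,c') \in N(e,v,c)$ is at most $\N\exp(\N^{1/20})$; the number of $\alpha \in L(f)$ is at most $\L\exp(\N^{1/20}) \leq 3\e\N\exp(\N^{1/20})$; the number of $w \in f$ is $k$; and the number of $(g,\beta) \in N(f,w,\alpha)$ is at most $\N\exp(\N^{1/20})$. So the total is at most $(k \cdot 3\e) \cdot \N^{3}\exp(3\N^{1/20})$, which is $\exp(O(\N^{1/20}))$. Multiplying by the per-tuple bound $\exp(-3\N^{1/20})$ and absorbing polynomial and constant factors into the exponent (valid for $\N$ large enough in terms of $k$ and $\eps$), we get $\Pr(\TMB) \leq \exp(-\N^{1/20})$ by the union bound, as claimed.

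The main obstacle — really the only subtle point — is making sure the weighted counting is set up correctly: one must be careful that $\bigcup_{w'\in g} N(g,w',\beta)$ has weighted size $\leq k\N$ rather than $\leq\N$ (the union over the $k$ vertices of $g$ costs the extra factor $k$), and that passing from weighted to unweighted sizes via~\eqref{equ:weighted-vs-nonweighted} only inflates things by $\exp(\N^{1/20})$ per level, which is harmless since we only descend a bounded number of levels. Everything else is a routine repetition of the computation already carried out in Claim~\ref{cla:TooManyColours}.
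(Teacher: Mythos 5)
Your proposal is correct and follows essentially the same route as the paper: the paper bounds $\Exp(|A|)$ for the set $A$ of bad pairs and applies Markov's inequality at threshold $1$, which after expanding the expectation is exactly your union bound over tuples $(f,c'),\alpha,w,(g,\beta)$ and over $\N^{1/20}$-subsets $S$, invoking Lemma~\ref{lem:weighted-binomial-coeff-bound} with the same parameter $p \leq k/\ln\N$ coming from the weighted size $\leq k\N$ of $\bigcup_{w'\in g}N(g,w',\beta)$. The only cosmetic difference is that you count $|L(f)| \leq 3\e\N\exp(\N^{1/20})$ where the bound should carry the factor $k$ from $\L < 3\e k\N$, which is harmlessly absorbed.
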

	\begin{proof}[Proof of the claim]
		Let $A^{}$ denote the set of pairs $(g, \beta)$ such that there exists $(f,c') \in N(e,v,c)$, $\alpha \in L(f)$, and $w \in f$ such that $(g, \beta) \in  N(f,w,\alpha)$ and at least $\N^{1/20}$ pairs $(g', \beta') \in \bigcup_{w' \in g} N(g,w',\beta)$ such that $g'$ is assigned $\beta'$ in step~\ref{prod:assign} of the procedure.  
		{We can thus bound the expected value of $|{A^{}}|$ by
			\begin{align*}
				\Exp\left(|A^{}|\right) & \leq \sum_{(f,c') \in  {N(e,v,c)}} 
				\sum_{ \alpha \in L(f)}
				\sum_{w \in f}
				\sum_{(g, \beta) \in  N(f,w,\alpha)}
				\sum_{S \in \binom{\bigcup_{w' \in g} N(g,w',\beta)}{\N^{1/20}}} 
				\prod_{(g',\beta') \in S} \frac{\mu(g',\beta')}{\L} \frac{1}{ \ln \N}
				\\&\leq  \sum_{(f,c') \in  {N(e,v,c)}} 
				\sum_{ \alpha \in L(f)}
				\sum_{w \in f}
				\sum_{(g, \beta) \in  N(f,w,\alpha)} \left(\frac{\e k  }{\N^{1/20} \ln \N}\right)^{\N^{1/20}},
			\end{align*} 
			where we used in the second line, we used Lemma~\eqref{lem:weighted-binomial-coeff-bound} with $N = |\bigcup_{w' \in g} N(g,w',\beta)|$, $k=n^{1/20}$ and $p = \frac{k}{\ln n} \geq \frac{kn}{\ell \ln n} \ge \sum_{\beta'\in \bigcup_{w' \in g} N(g,w',\beta)} \mu(g',\beta') \frac{1}{\ell \ln n}$.
			We continue to estimate
			\begin{align*}
				\Exp\left(|A^{}|\right) &\leq  \sum_{(f,c') \in  {N(e,v,c)}} 
				\sum_{ \alpha \in L(f)} k \cdot \exp\left(n^{1/20}\right) \cdot \N \cdot
				\left(\frac{\e k  }{\N^{1/20} \ln \N}\right)^{\N^{1/20}}
				\\&\leq  \sum_{(f,c') \in  {N(e,v,c)}} 
				\exp\left(n^{1/20}\right) \cdot \ell \cdot k \cdot \exp\left(n^{1/20}\right) \cdot \N \cdot
				\left(\frac{\e k  }{\N^{1/20} \ln \N}\right)^{\N^{1/20}}
			\end{align*} 
			where in the first line, we used~\eqref{equ:weighted-vs-nonweighted} to show that
			\[
			\sum_{(g, \beta) \in  N(f,w,\alpha)} 1 = |N(f,w,\alpha)| \le \exp\left(n^{1/20}\right) \om{N(f,w,\alpha)} \le \exp\left(n^{1/20}\right) \cdot n
			\]
			while $|f|=k$, and similarly in the second line, we used~\eqref{equ:weighted-vs-nonweighted} to show that $\sum_{ \alpha \in L(f)} 1 = |L(f)| \le \exp\left(n^{1/20}\right) \om{L(f)} \le \exp\left(n^{1/20}\right) \cdot \ell$.
			Finally, this gives
			\begin{align*}
				\Exp\left(|A^{}|\right) &\leq \exp\left(n^{1/20}\right) \cdot \N \cdot \exp\left(n^{1/20}\right) \cdot \ell \cdot  k \cdot n \exp\left(n^{1/20}\right) \cdot \left(\frac{\e k  }{\N^{1/20} \ln \N}\right)^{\N^{1/20}}
				\\&\leq \exp\left({ - \N^{1/20}}\right),
			\end{align*} 
			where (similarly to before) in the first line, we used~\eqref{equ:weighted-vs-nonweighted} to show that $\sum_{(f,c')\in N(e,v,c)} 1 = |N(e,v,c)| \le \exp\left(n^{1/20}\right) \om{N(e,v,c)} \le \exp\left(n^{1/20}\right) \cdot \N$, and for the last line we we used that $n$ is large enough.
			Now the claim follows by Markov's inequality.}
	\end{proof}

	To show that the random variable $\om{N''(e,v,c)}$ is highly concentrated, we express it as the difference of the following two random variables.
	\begin{itemize}
		\item Let $X_1$ contain the pairs $(f,c') \in N(e,v,c)$ such that all colours assigned to $f$ are removed from $f$.
		\item Let $X_2$ contain the pairs $(f,c') \in N(e,v,c)$ such that all colours assigned to $f$ are removed from $f$ {and} $L'(f)$ loses $c'$ at $w$ for some $w \in f \sm \{v\}$.
	\end{itemize}
	
	Observe that,  $ \om{N''(e,v,c)} = \om{X_1} -\om{X_2}$.
	We will use  Theorem~\ref{thm:talagrand-V} to show that $\om{X_1}$ and $\om{X_2}$ are each concentrated around their expected value.
	The analysis of $\om{X_1}$ and $\om{X_2}$ is almost the same.
	We will carry out the details for $\om{X_2}$.
	\newcommand{\Cert}{\text{Cert}}
	
	Let $({T}_i)$ denote the trials corresponding to the colour assignments and coin flips, respectively.
	{Let $x$ be an outcome of these trials and suppose that $x \notin \TMC \cup \TMB$.}
	For each $(f,c') \in N(e,v,c)$, let $C(f)$ be the set containing all colours assigned to the edge $f$ under the outcome of trials $x$.
	Suppose that $(f,c') \in X_2(x)$.
	Thus each colour $ \alpha \in C(f) $ must have been removed from $L'(f)$ under the outcome of trials $x$.
	In  particular, for every colour $\alpha \in C(f) $, there is a colour assignment or coin flip $T_{i_{f,\alpha}}(x)$ that witnesses $\alpha$ being removed from $f$.
	Similarly there is a colour assignment or coin flip $T_{i_{f,c'}}(x)$ that  {witnesses $c'$ being removed from $L'(f)$.} 
	Importantly, the witnessing trial $T_{i_{f,c'}}(x)$ corresponds either to an assignment of a colour to an edge $g\sim f$ with $v \notin g$ or to a coin flip $F(f,w,c')$ for $w \neq v$.
	For each $i$, define 
	$$W_i = \{(f,c',\alpha)\colon \text{ $(f,c')\in X_2(x)$ and $\alpha \in C(f) $ with $i = i_{f,\alpha}$}\}.$$
	and 
	$$b_i = \sum_{(f,c',\alpha) \in W_i}  \mu(f,c').$$
	Note that any $|W_i|\leq 1$ if $T_i$ corresponds to a coin flip.
	Since $G$ is $k$-uniform and $x \notin \TMB$, we have $|W_i| \leq k\N^{1/20}$ if $T_i$ corresponds to a colour assignment.
	Since $\mu(f,c') \leq 1$, we have $\mu(f,c')^2 \leq \mu(f,c')$.
	Moreover, as $(f,c') \notin \TMC$, it follows that $|C(f)| \leq \N^{1/10}$.
	Hence
	\begin{align*}
		\sum_i b_i^2 
		&\leq \left(k\N^{1/20}\right)^2 \sum_{i} \sum_{(f,c',\alpha) \in W_i} \mu(f,c')  
		\\&= k^2\N^{1/10} \sum_{\substack{(f,c') \in X_2(x)}} \sum_{\alpha \in C(f)} \mu(f,c')  
		\\&\leq k^2 \N^{2/10}  \sum_{(f,c') \in X_2(x) }   \mu(f,c')
		\\&= k^2 \N^{1/5} \cdot \om{X_2(x)}.
	\end{align*}
	Next, consider another outcome $y$ of the trials.
	If a pair $(f,c') \in X_2(x)$ is not in $X_2(y)$, then in particular $y_i \neq x_i$ for the witnessing trial $i = i_{f,\alpha}$ of some colour $\alpha \in C(f) $.
	Moreover, the removal of $f$ from $X_2(y)$ reduces $\om{X_2(y)}$ by $\mu(f,c_f)$ in comparison to $\om{X_2(x)}$.
	It follows that $\om{X_2(y)} \geq \om{X_2(x)} - \sum_{x_i \neq y_i} b_i$.
	Thus $\om{X_2(y)}$ is $b$-certifiable for $b=k^2\N^{1/5}$.

	For $j =1,2$, we apply Theorem~\ref{thm:talagrand-V} to $X_j$ with $\Om^* = \TMC \cup \TMB$, $b=k^2\N^{1/5}$, and  {$t = \N^{2/3}/2$}. 
	Note that ${\om{X_j} \leq \N}$ and thus $\sup(X) \leq \N$ as well as $\Exp(X_j) \leq \N$.
	Moreover, by Claim~\ref{cla:TooManyColours} and~\ref{cla:TooManyBlocked}, $\Pr{(\Om^*)} \leq \exp \left(n^{-1/10}\right)$.
	Using $\L \geq \N$, it follows that 
	$${t} = {\N^{2/3}/2} > 96 \sqrt{k^2\N^{1/5} \N} + 128 k^2\N^{1/5}    +8 \N \exp \left(n^{-1/10}\right) \geq 96  \sqrt{b\Exp (X)} +128 {b} + 8 \sup(X) \Pr(\Om^*)$$ 
	for $\N$ large enough.
	Thus
	\begin{align*}
		\Pr\left(\left| \om{N''(e,v,c)}- \Exp(\om{N''(e,v,c)}) \right| > \N^{2/3} \right) 
		&\le \Pr \left(\left| \Exp(\om{X_1})- \om{X_1} \right| > \frac{1}{2} \cdot \N^{2/3} \right) 
		\\&\quad + \Pr \left(\left| \Exp(\om{X_2})- \om{X_2} \right| > \frac{1}{2} \cdot \N^{2/3} \right) 
		\\ &\leq 8\cdot \exp\left({-\frac{\N^{4/3}}{8k^2\N^{1/5} (4\N+\N^{2/3})}}\right) + 8 \cdot \exp\left({\N^{-1/10}}\right)
		\\&\leq  16 \cdot {\exp\left({-\N^{1/10}}\right)}. 
	\end{align*}
	By Claim~\ref{cla:expectation}, we have {$\Exp \left({\om{N''(e,v,c)}}\right) \leq \N\cdot \K^{k-1} \cdot \left(1-\frac{{1-\eps/2}}{\ln \N} \K^{k} \right) \le n'-n^{2/3}$.} This shows part~\ref{itm:concentration-enemy-list} of Claim~\ref{cla:concentration}.

	\section{Finishing blow} \label{sec:finisher}
	In this section, we prove Lemma~\ref{lem:edge-finisher}.
	We will actually show a more general result for vertex colourings.
	
	Let $G=(V,E)$ be a $k$-uniform linear hypergraph.
	A \emph{vertex correspondence} $\sigma$ of $G$ consists of integer permutations $\sigma_{v,u}=\sigma_{u,v}$ for all adjacent vertices $v,u$.
	We say that $(u,c)$ \emph{blocks} $(v,c')$ if $\sigma_{u,v}(c) = c'$.
	An assignment $(L,\mu)$ of \emph{weighted} lists of colours to the vertices of $G$ consists of a lists of colours $L(v)$ and weight functions $\mu(v)\colon L(v) \to [0,1]$.
	An $(L,\sigma)$-colouring is a function $\gamma \colon V \to \NATS$ such that
	\begin{itemize}
		\item $\gamma(c) \in L(v)$ for every $v \in V$, and
		\item $(v,\gamma(v))$ does not block $(u,\gamma(u))$ for all adjacent vertices $u,v$.
	\end{itemize}
	The set $N_{G,L,\sigma}(v,c)$ contains all pairs $(w,c') \in V \times \NATS$ such that $w \in N(v)$, $c' \in L(w)$ and $(w,c')$ blocks $(e,c)$.
	\begin{lemma}[Finisher vertex version]\label{lem:vertex-finisher}
		Let $G =(V,E)$ be a $k$-uniform linear hypergraph with an vertex correspondence $\sigma$ and weighted lists of colours $(L,\mu)$ assigned to its vertices.
		Suppose that each list has finite size.
		Let $\L,\N >0$ such that, for all $v \in E$, all of the following hold:
		\begin{enumerate}[\upshape (i)]
			\item \label{itm:ratio} $\L/\N\geq 3\e $,
			\item \label{itm:sum-over-colours} $\om{L(v)} \geq \L$ for all $v \in V$,
			\item \label{itm:sum-over-neighbours} $\om{N_{G,L,\sigma}(v,c)} \leq \N$ for all $v \in V$ and $c \in L(v)$.
		\end{enumerate}
		Then $G$ has an $(L,\sigma)$-colouring.
	\end{lemma}
	Note that we can derive Lemma~\ref{lem:edge-finisher} by applying Lemma~\ref{lem:vertex-finisher} to the {link graph}
	
	\begin{proof}[Proof of Lemma~\ref{lem:vertex-finisher}]
		For each vertex $v$, select independently a colour $c \in \NATS$ according to the distribution $\mu(v,c)/\L$.
		For every edge $uv$ and colours $c,c' \in \NATS$ such that $(u,c)$ blocks $(v,c')$, we let $B({u,v,c,c'})$ be the (bad) event that $u$ is assigned $c$ and $v$ is assigned $c'$.
		Note that $B({u,v,c,c'})$ is independent of all events whose vertices are not adjacent to $u$ or $v$.
		Moreover, $B({u,v,c,c'})$ has probability $\mu(u,c)\cdot \mu(v,c')/\L^2$.
		Also note that, as each list has finite size, there are only finitely many events $B(u,v,c,c')$.
		{To apply Theorem~\ref{thm:local-lemma}, we define  $x_{u,v,c,c'}$ by setting
			\begin{align*}	 
				x_{u,v,c,c'} =   1-\frac{1}{\frac{\mu(u,c)\cdot \mu(v,c')}{2\L \N}+1}.
			\end{align*}
			Since $  1- \frac{1}{1/x+1} = \frac{1}{x+1}$ for every $x \neq -1$, we may bound
			\begin{align}	\label{equ:local-lemma-bound-x}
				x_{u,v,c,c'}&=  \frac{1}{\frac{2\L \N}{\mu(u,c)\cdot \mu(v,c')}+1} \nonumber
				\\ &\geq	\frac{\mu(u,c)\cdot \mu(v,c')}{3\L \N } \nonumber
				\\ & 	\overset{\text{\ref{itm:ratio}}}{\geq}  \e \cdot  \frac{\mu(u,c)\cdot \mu(v,c')}{\L^2}		 = \e \cdot \Pr\left({B(u,v,c,c')}\right).
		\end{align}}

		On the other hand,  using that $1-x \geq \e^{-{\frac{x}{1-x}}} =\e^{{1-\frac{1}{1-x}}}$, we obtain
		\begin{align}\label{equ:local-lemma-dependency}
			&\quad 
			\prod_{\substack{uw \in E}} \prod_{ \substack{b \in L(u),~b' \in L(w), \\ \text{$(u,b)$ blocks $(w,b')$} } } (1-x_{u,w,b,b'}) \cdot 
			\prod_{\substack{vw \in E}} \prod_{\substack{b \in L(v),~b' \in L(w), \\\text{$(v,b)$ blocks $(w,b')$} } }  (1-x_{v,w,b,b'})\nonumber
			\\ &\geq   
			\exp\left({-\sum_{uw \in E} \sum_{ \substack{b \in L(u),~b' \in L(w), \\ \text{$(u,b)$ blocks $(w,b')$} } } \frac{\mu(u,b)\cdot \mu(w,b')}{2\L  \N}}\right) \cdot 
			\exp\left({-\sum_{vw \in E} \sum_{ \substack{b \in L(v),~b' \in L(w), \\ \text{$(v,b)$ blocks $(w,b')$} } } \frac{\mu(v,b)\cdot \mu(w,b')}{2\L \N}}\right)\nonumber
			\\ &=    
			\exp\left(-\sum_{ b \in L(u)}{\frac{\mu(u,b)}{2\L \N}  \sum_{ (w,b') \in N_{G,L,\sigma}(u,b)} \mu(w,b')}\right) \cdot 
			\exp\left(-\sum_{ b \in L(v)}{\frac{\mu(v,b)}{2\L \N}  \sum_{ (w,b') \in N_{G,L,\sigma}(v,b)} \mu(w,b')}\right) \nonumber  
			\\ &\overset{\text{\ref{itm:sum-over-neighbours}}}{\geq}   
			\exp\left(-\sum_{ b \in L(u) }{\frac{\mu(u,b)}{2\L}}\right) \cdot 
			\exp\left(-\sum_{ b \in L(v)}{\frac{\mu(v,b)}{2\L}}\right)  
			\overset{\text{\ref{itm:sum-over-colours}}}{\geq}  \frac{1}{\e}.
		\end{align}
		In combination~\eqref{equ:local-lemma-bound-x} and~\eqref{equ:local-lemma-dependency}, show that the conditions of Theorem~\ref{thm:local-lemma} are satisfied.
		Thus with positive probability none of the events $B(u,v,c,c')$ occur.
		It follows that $G$ has an $(L,\sigma)$-colouring.
	\end{proof}
	
	\section{Open problems}\label{sec:open-problems}
	In light of our results, the most basic open question is whether one can prove Theorems~\ref{thm:main-simple}--\ref{thm:main} without the polylogarithmic relation between the minimum and maximum degree.
	While in some situations of local vertex colourings such a relation is in fact necessary~\cite{DJK20}, we believe that in the setting of list edge colouring it might as well be redundant.
	In our proof, the necessity for this relation arises from the use of the Lov\'asz Local Lemma (see Claim~\ref{cla:expectation}), which is an integral part of the approach.
	Thus, it seems that new ideas would be required to circumvent this condition.
	
	Another interesting problem consists in improving the error terms of our bounds. 
	Molloy and Reed~\cite{MR00} improved Theorem~\ref{thm:kahn} to $\chi'_{\ell}(G) \leq \Delta(G) + O(\sqrt{\Delta} \log^4 \Delta)$.
	It is therefore natural to ask whether similar results could be obtained in the local setting.
	One obstacle with regards to this is that, in the situation of correspondence colourings, it does not seem to be possible to reserve a set of colours as required for the proof of Molloy and Reed~\cite{MR00}.
	
	Finally, it might as well be true that the List Colouring Conjecture (Conjecture~\ref{con:list-colouring-conjecture}) extends to the local setting.
	However, since the original conjecture is already quite an illusive problem, let us instead finish with a less daunting question.
	Recall that Vizing's theorem states that every graph $G$ of maximum degree $\Delta$ has an $L$-edge-colouring from the lists $L(e)=\{1,\dots,\Delta+1\}$.
	It is our belief that this result can be strengthened to the local setting.
	\begin{conjecture}[Local Vizing's theorem]\label{con:local-vizing}
		Let $G$ be a graph with a list assignment $L$ of $E(G)$ such that for every edge $e=uv$
		$$ L(e)= \max \{1,\dots,\max \{\deg(u),\deg(v)\}+1\}.$$
		Then there is an $L$-edge-colouring of $G$.
	\end{conjecture}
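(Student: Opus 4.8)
The plan is to treat this as a \emph{local augmentation} problem in the spirit of Vizing's original proof, rather than via the semi-random method behind Theorem~\ref{thm:main-simple}: the nibble of Lemma~\ref{lem:nibble} loses a multiplicative $(1+\eps)$ factor and Lemma~\ref{lem:edge-finisher} needs a multiplicative gap $\L/\N \ge 3\e k$, so neither can possibly deliver an additive $+1$. It is worth recording up front what the conjecture subsumes. Restricting to $d$-regular $G$, every list has size $d+1=\Delta(G)+1$, so a proof would in particular settle the list version of Vizing's theorem for regular graphs, which appears to be open; and restricting to bipartite $G$ it is already weaker than Theorem~\ref{BKW} of Borodin, Kostochka and Woodall (and, in the regular case, follows from Galvin's Theorem~\ref{Galvin}). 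So a complete solution is genuinely hard, and a realistic goal is a conditional proof or a proof of a weakened quantitative form.

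The concrete line of attack I would pursue is the fan-and-Kempe-chain method. Fix a maximal partial proper $L$-edge-colouring $\phi$ and an uncoloured edge $e_0=xy_0$; write $d(e)=\max\{\deg(u),\deg(v)\}$ for $e=uv$, so $L(e)=\{1,\dots,d(e)+1\}$. The one structural feature the nested lists hand us for free is this: for every vertex $v$, each edge at $v$ has list containing $\{1,\dots,\deg(v)+1\}$ (since $d(e)\ge\deg(v)$ for $e\ni v$), while $v$ has only $\deg(v)$ incident edges, so in any partial colouring \emph{some} colour of $\{1,\dots,\deg(v)+1\}$ is missing at $v$. Using this, one builds a Vizing fan $F=(e_0,e_1,\dots,e_p)$ at $x$, with $e_i=xy_i$ and $\phi(e_{i+1})$ a colour missing at $y_i$ that also lies in $L(e_i)$, then tries to rotate $F$ to free a colour for $e_0$, and if that fails, to finish with a Kempe swap on a colour $\alpha$ missing at $x$ and a colour $\beta$ missing at $y_p$, both taken small (at most $\deg(x)+1$, resp.\ $\deg(y_p)+1$) by the observation above. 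When a single fan does not suffice one iterates with Kierstead paths and multi-fans as in the modern proofs of Vizing's theorem, the extra bookkeeping being to keep every colour that is ever recoloured below the relevant local list bound.

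The main obstacle is exactly where list edge-colouring diverges from ordinary edge-colouring: a Kempe swap on $\{\alpha,\beta\}$ requires \emph{every} edge on the $\alpha/\beta$-alternating path to have both $\alpha$ and $\beta$ in its list, whereas here an edge $f$ on that path is guaranteed only the colour it currently carries, and $d(f)$ may be smaller than $\max\{\alpha,\beta\}$. For fully general local lists (the natural strengthening: $|L(e)|\ge d(e)+1$ suffices) this obstruction is essentially the open list-Vizing problem, so I would not expect to defeat it head-on; for the nested lists as stated there is a fighting chance, since one can try to force both $\alpha$ and $\beta$ to be very small, or route the alternating chain through high-degree edges only, but controlling the vertex set of that chain this tightly is the crux, and I expect it to be the point where the argument either closes or stalls.

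As a fallback I would first aim for a local Vizing theorem with an absolute additive error, say $|L(e)|\ge d(e)+C$ for a constant $C$, or for removing the hypothesis $\delta(G)\ge\ln^{25}\Delta(G)$ from Theorem~\ref{thm:main-simple} (see the discussion after the proof of Lemma~\ref{lem:nibble}); a Moser--Tardos / entropy-compression endgame replacing Lemma~\ref{lem:edge-finisher} also seems worth exploring, since that framework can occasionally reach exact thresholds that the Lov\'asz Local Lemma cannot.
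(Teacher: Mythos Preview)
This statement is a \emph{conjecture} in the paper, not a theorem; the paper offers no proof, so there is nothing to compare your attempt against. What you have written is accordingly not a proof but a research plan, and it is mostly a sensible one. One point, however, is wrong and undercuts your assessment of the difficulty.

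You claim that for $d$-regular $G$ the conjecture specializes to the list version of Vizing's theorem, which is open. It does not. The lists in the conjecture are \emph{prescribed}: $L(e)=\{1,\dots,\max\{\deg(u),\deg(v)\}+1\}$. For a $d$-regular graph every edge therefore receives the \emph{same} list $\{1,\dots,d+1\}$, and an $L$-edge-colouring is exactly an ordinary proper $(d+1)$-edge-colouring. That is Vizing's theorem, which is of course known. So the regular case is trivial here, and the conjecture does not imply the list-Vizing bound $\chi'_\ell(G)\le\Delta+1$ even for regular graphs. The hard content of the conjecture lies entirely in the irregular case, where the lists are genuinely nested but not identical.

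This misreading matters because it colours your expectation of how hard the problem is. With the lists being nested intervals $\{1,\dots,d(e)+1\}$ rather than arbitrary, there is more structure to exploit than in a genuine list problem: in particular, your own observation that a Kempe $\{\alpha,\beta\}$-chain is safe whenever $\max\{\alpha,\beta\}\le d(f)+1$ for every edge $f$ on it can be pushed further, since an edge $f$ currently coloured $\alpha$ automatically has $\alpha\le d(f)+1$, so only the \emph{other} colour needs checking. Your fan-and-Kempe outline and the identified obstruction are otherwise on target, but calibrate the difficulty against the correct benchmark: this is a local strengthening of ordinary Vizing, not of list Vizing.
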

	
	{We remark that Conjecture~\ref{con:local-vizing} was recently confirmed by Christiansen~\cite{Chr22} using a clever induction argument.}

	\section{Acknowledgments}
	The authors would like to thank Louis Esperet and Jan Van Den Heuvel for stimulating discussions on the matching polytope.
	{We also thank two anonymous referees for many helpful remarks.}
	
	\bibliographystyle{amsplain}
	\bibliography{local-kahn}
	
	\appendix
	\section{Appendix}
	\label{sec:appendix}

	In this appendix, we prove Theorem~\ref{thm:talagrand-V}. First we need the following lemma.
	
	\begin{lemma}\label{lem:CertScaling}
		If $X$ is $b$-certifiable, then for every $s \ge 0$ and $\omega\in\Omega\setminus\Omega^*$ such that $X(\omega) \geq s$, there exists a $b$-certificate for $X, \omega, s$, and $\Omega^*$.
	\end{lemma}
	\begin{proof}
		Since $X$ is $b$-certifiable, there exists a $b$-certificate for $X,\omega, s':=X(\omega)$ and $\Omega^*$ by definition, that is there exists $I\subseteq \{1,\ldots,n\}$ and a vector $(c_i'\colon i\in I)$ with $\sum_{i\in I} (c_{i}')^2 \le bs'$ such that for all $I'\subseteq I$, we have that
		\begin{equation*}
			X(\omega') \geq s' - \sum_{i\in I'} c_i',
		\end{equation*}
		for all $\omega' = (\omega'_1, \dots, \omega'_n)\in\Omega\setminus\Omega^*$  such that $\omega_i=\omega_i'$ for all $i\in I\setminus I'$.
		
		Note that $s'=X(\omega)\ge s$ by assumption.
		{If $s=0$, then the empty certificate is a $b$-certificate for $X,\omega,0$ and $\Omega^*$ as desired since $X\ge 0$. So we assume that $s > 0$ and hence $s' > 0$.}
		Now for all $i\in I$, we define $c_i := c_i' \cdot \frac{s}{s'}$. We claim that $I$ and $(c_i\colon i\in I)$ is a $b$-certificate for $X,\omega,s$ and $\Omega^*$ as desired. To see this, note that
		$$\sum_{i\in I} c_i^2 = \left(\frac{s'}{s}\right)^2 \sum_{i\in I} (c_i')^2 \le \left(\frac{s}{s'}\right)^2 bs' \le bs,$$
		where the last inequality follows since $s\le s'$. Furthermore, for all $\omega' = (\omega'_1, \dots, \omega'_n)\in\Omega\setminus\Omega^*$  such that $\omega_i=\omega_i'$ for all $i\in I\setminus I'$, we have that
		$$X(\omega') \ge s' - \sum_{i\in I'} c_i' = \frac{s'}{s} \left( s- \sum_{i\in I'} c_i \right) \ge s- \sum_{i\in I'} c_i,$$
		where the last inequality follows since $s'\ge s$. This completes the claim and hence the proof of the lemma.
	\end{proof}
	
	In order to prove Theorem~\ref{thm:talagrand-V}, we show the following theorem which yields concentration around the median under the same conditions.
	\begin{theorem}\label{exceptional talagrand's with median}
		If $X$ is $b$-certifiable with respect to $\Omega^*$, then for any $t > 0$,
		\begin{equation*}
			\Prob{|X - \Med(X)| > t} \leq 4\exp\left({-\frac{t^2}{4b(\Med(X) + t)}}\right) + 4\Prob{\Omega^*}
		\end{equation*}
	\end{theorem}
	
	We then prove that the expectation and median are close as in the following lemma.
	\begin{lemma}\label{expectation close to median}
		If $X$ is $b$-certifiable with respect to $\Omega^*$ and $M = \sup X$, then
		\begin{equation*}
			|\Expect{X} - \Med(X)| \leq 48\sqrt{b\Expect{X}} +  64b + 4M\Prob{\Omega^*}.
		\end{equation*}
	\end{lemma}
	\begin{proof}
		Let $Y = X + \Expect{X}$.
		Note that $\Expect{Y} - \Med(Y) = \Expect{X} - \Med(X)$, $\Med(Y) \geq \Expect{X} > 0$, and $\Expect{Y} \leq 2\Expect{X}$.
		(Since $X$ is non-negative, the case $\Expect{X}=0$ is trivial.)
		Note also that
		\begin{equation*}
			|\Expect{Y} - \Med(Y)| \leq \Expect{|Y - \Med(Y)|}.
		\end{equation*}
		
		Let $L = \lfloor M/(\sqrt{b\Med(Y)})\rfloor$, and note that $|Y - \Med(Y)| \leq  (L + 1)\sqrt{b\Med(Y)}$.  By partitioning the possible values of $|Y - \Med(Y)|$ into intervals of length $\sqrt{b\Med(Y)}$, we get
		
		\begin{align*}
			\Expect{|Y - \Med(Y)|} &\leq \begin{aligned}[t]
				\sum_{\ell=0}^L \sqrt{b\Med(Y)}(\ell + 1) &\left(\Prob{|Y - \Med(Y)| \geq \ell \sqrt{b\Med(Y)}}\right.\\
				&\left. - \Prob{|Y - \Med(Y)| \geq (\ell + 1) \sqrt{b\Med(Y)}}\right)\end{aligned}\\
			&=  \sqrt{b\Med(Y)} \sum_{\ell=0}^L  \Prob{|Y - \Med(Y)| \geq \ell \sqrt{b\Med(Y)}} .
		\end{align*}
		By applying Theorem~\ref{exceptional talagrand's with median} with $t=\ell \sqrt{b\Med(Y)}$ to every summand,
		\begin{equation*}
			\Expect{|Y - \Med(Y)|} \leq 4\sqrt{b\Med(Y)}\sum_{\ell=0}^L \left(\exp\left({-\frac{\ell^2b\Med(Y)}{4b(\Med(Y) + \ell \sqrt{b\Med(Y)})}}\right) + \Prob{\Omega^*}\right).
		\end{equation*}
		Note that for each $\ell\in\{0, \dots, L\}$,
		\begin{multline*}
			\exp\left({\frac{\ell^2b\Med(Y)}{4b(\Med(Y) + \ell \sqrt{b\Med(Y)})}}\right) \leq \exp\left({\frac{\ell^2b\Med(Y)}{8b\max\{\Med(Y), \ell \sqrt{b\Med(Y)}\}}}\right)\\
			\leq \exp\left({\frac{\ell^2b\Med(Y)}{8b\Med(Y)}}\right) + \exp\left({\frac{\ell^2b\Med(Y)}{8b\ell\sqrt{b\Med(Y)}\}}}\right) = \exp\left(\ell^2/8\right) + \exp\left(\frac{\ell\sqrt{\Med(Y)}}{8\sqrt{b}}\right).
		\end{multline*}
		Recall that $L = \lfloor M/(\sqrt{b\Med(Y)})\rfloor$, and hence 
		\begin{equation*}
			{4\sqrt{b\Med(Y)}\sum_{\ell=0}^L\Prob{\Omega^*} \leq 4M\Prob{\Omega^*}.}
		\end{equation*} 
		Therefore
		\begin{equation*}
			\Expect{|Y - \Med(Y)|} \leq 4\sqrt{b\Med(Y)}\sum_{\ell=0}^\infty \left(\exp\left({-\ell^2/8}\right) + \exp\left({-\frac{\ell\sqrt{\Med(Y)}}{8\sqrt{b}}}\right)\right) + 4M\Prob{\Omega^*}.
		\end{equation*}
		Note that $\sum_{\ell=0}^\infty e^{-\ell x} = \frac{1}{1 - e^{-x}}$.  Note also that $\frac{x}{2} \leq 1 - e^{-x}$ if $x < \frac{3}{2}$.   Since $\frac{1}{1 - e^{-x}} < 2$ when $x\geq \frac{3}{2}$, $\frac{1}{1 - e^{-x}} \leq \max\{2, \frac{2}{x}\}$.  Therefore
		\begin{equation*}
			\sum_{\ell=0}^\infty \exp\left(-\frac{\ell\sqrt{\Med(Y)}}{8\sqrt{b}}\right) \leq \max\left\{2, \frac{16\sqrt{b}}{\sqrt{\Med(Y)}}\right\}.
		\end{equation*}
		
		Note that $\sum_{\ell=0}^\infty e^{-\ell^2/ 8} < 4$.  Therefore
		\begin{equation*}
			\Expect{|Y - \Med(Y)|} \leq 4\sqrt{b\Med(Y)}\left(4 + \max\left\{2, \frac{16\sqrt{b}}{\sqrt{\Med(Y)}}\right\}\right) + 4M\Prob{\Omega^*}.  
		\end{equation*}
		Since the maximum of two numbers is at most their sum,
		\begin{equation*}
			\Expect{|Y - \Med(Y)|} \leq 24\sqrt{b\Med(Y)} + 64b + 4M\Prob{\Omega^*}.
		\end{equation*}
		Since {$\Med(Y) \leq 2\Expect{Y} = 4\Expect{X}$,}
		\begin{equation*}
			\Expect{Y - \Med(Y)|} \leq 48\sqrt{b\Expect{X}} +  64b + 4M\Prob{\Omega^*},
		\end{equation*}
		as desired.
	\end{proof}

	Now we can prove Theorem~\ref{thm:talagrand-V} assuming Theorem~\ref{exceptional talagrand's with median}.
	\begin{proof}[Proof of Theorem \ref{thm:talagrand-V}]
		Since $t > 96\sqrt{b\Expect{X}} +  128b + 8M\Prob{\Omega^*},$
		\begin{equation}\label{t over 2 bound}
			\frac{t}{2} > 48\sqrt{b\Expect{X}} +  64b + 4M\Prob{\Omega^*}.
		\end{equation}
		By applying Lemma \ref{expectation close to median} and then \eqref{t over 2 bound},
		\begin{equation*}
			\Prob{|X - \Expect{X}| > t} \leq \Prob{|X - \Med(X)| > \frac{t}{2}}.
		\end{equation*}
		Since $\Med(X) \leq 2\Expect{X}$, Theorem~\ref{exceptional talagrand's with median} implies that
		\begin{align*}
			\Prob{|X - \Med(X)| > \frac{t}{2}} &\leq 4\exp\left({-\frac{(t/2)^2}{4b(2\Expect{X} + (t/2))}}\right) + 4\Prob{\Omega^*},\\
			&=4\exp\left({-\frac{t^2}{8b(4\Expect{X} + t)}}\right) + 4\Prob{\Omega^*}
		\end{align*}
		as desired.
	\end{proof}

	It remains to prove Theorem~\ref{exceptional talagrand's with median}.
	Let $((\Omega_i, \Sigma_i, \mathbb P_i))_{i=1}^n$ be probability spaces and $(\Omega, \Sigma, \mathbb P)$ their product space.  For a set $A\subseteq \Omega$ and event $\omega\in\Omega$, let
	\begin{equation}
		d(\omega, A) = \sup_{||\alpha||=1}\left\{\tau \colon \sum_{i\colon\omega_i\neq\omega'_i}\alpha_i\geq\tau \rm{\ for\ all\ }\omega'\in A\right\}.
	\end{equation}
	
	We use the original version of Talagrand's Inequality.
	\begin{theorem}[Talagrand's Inequality \cite{Tal95}]\label{OG Tala}
		If $A,B\subseteq \Omega$ are measurable sets such that for all $\omega\in B$, $d(\omega, A) \geq \tau$, then
		$$\Prob{A}\Prob{B}\leq e^{\frac{-\tau^2}{4}}.$$
	\end{theorem}
	
	We can now prove Theorem~\ref{exceptional talagrand's with median}.
	\begin{proof}[Proof of Theorem~\ref{exceptional talagrand's with median}]
		It suffices to show that
		\begin{equation}\label{one sided estimation}
			\Prob{X\leq \Med(X) - t} \leq 2\exp\left({-\frac{t^2}{8b(\Med(X) + t)}}\right) + 2\Prob{\Omega^*}
		\end{equation}
		and
		\begin{equation}\label{other one sided estimation}
			\Prob{X\geq \Med(X) + t} \leq 2\exp\left({-\frac{t^2}{8b(\Med(X) + t)}}\right) + 2\Prob{\Omega^*}.
		\end{equation}
		Let 
		\begin{align*}
			&A = \{\omega\in \Omega\backslash \Omega^* \colon X(\omega) \geq \Med(X) + t\}, \text{ and}\\
			&B = \{\omega\in \Omega\backslash \Omega^* \colon X(\omega) \leq \Med(X) \}.
		\end{align*}
		
		We need to show the following.
		\begin{claim}\label{A and B satisfy OG Tala}
			For all $\omega\in A$, $d(\omega, B) \geq \frac{t}{\sqrt{b(\Med(X) + t)}}$.
		\end{claim}
		\begin{proofclaim}
			Since $X$ is $b$-certifiable, we have by Lemma~\ref{lem:CertScaling} that there exists a $b$-certificate for $X, \omega, \Med(X) + t,$ and $\Omega^*$, that is a subset $I\subseteq\{1,\ldots,n\}$ and a vector $(c_i\colon i\in I)$. 
			
			Let $\omega'\in B$.  Let $I' = \{i\in I\colon  \omega_i \ne \omega'_i\}$. By definition of $b$-certificates and since {$X(\omega') \leq \Med(X)$}, we have that 
			$${\sum_{i\in I'} c_i \ge (\Med(X)+t)-X(\omega') \ge t.}$$
				
			Let $r = \sqrt{\sum_{i\in I}c_i^2}$. By the definition of $b$-certificates, we have that $\sum_{i\in I}c_i^2 \le b(\Med(X)+t)$ and hence $r\le \sqrt{b(\Med(X)+t)}$.
			
			Now we set $\alpha = (\alpha_i\colon i\in[n])$ where $\alpha_i = c_i/ r$ if $i\in I$ and $\alpha_i = 0$ otherwise. Thus 
			$$\sum_{i\in I'} \frac{c_i}{r} \ge \frac{t}{r} \ge \frac{t}{\sqrt{b(\Med(X) + t)}}.$$
			Hence $d(\omega,B) \ge \frac{t}{\sqrt{b(\Med(X) + t)}}$ as desired.
		\end{proofclaim}
		
		Now \eqref{other one sided estimation} follows from Claim \ref{A and B satisfy OG Tala} and Theorem~\ref{OG Tala}.  The proof of~\eqref{one sided estimation} is similar, so we omit it.
	\end{proof}

\end{document}